\newcommand{\EE}{\mathbb{E}}
\newcommand{\pos}{\mathrm{pos}}
\newcommand{\Cov}{\mathrm{Cov}}
\newcommand{\MCF}{\mathcal{F}}
\newcommand{\MCG}{\mathcal{G}}
\newcommand{\tp}{\textbf{p}}
\newcommand{\tq}{\textbf{q}}
\newcommand{\tu}{\textbf{u}}
\newcommand{\te}{\textbf{e}}
\newcommand{\tr}{\textbf{r}}
\newcommand{\tv}{\textbf{v}}
\newcommand{\tx}{\textbf{x}}
\newcommand{\Rm}{\mathrm{m}}
\begin{document}

\title{Ensemble Kalman Inversion: mean-field limit and convergence analysis}

\author{Zhiyan Ding\and Qin Li}

\institute{Zhiyan Ding \at
              Mathematics Department, University of Wisconsin-Madison, 480 Lincoln Dr., Madison, WI 53705 USA. \\
              \email{zding49@math.wisc.edu}           
           \and
           Qin Li\at
              Mathematics Department and Wisconsin Institutes of Discoveries, University of Wisconsin-Madison, 480 Lincoln Dr., Madison, WI 53705 USA.\\
              \email{qinli@math.wisc.edu}
}

\date{}

\maketitle

\begin{abstract}
Ensemble Kalman Inversion (EKI) has been a very popular algorithm used in Bayesian inverse problems~\cite{Iglesias_2013}. It samples particles from a prior distribution, and introduces a motion to move the particles around in pseudo-time. As the pseudo-time goes to infinity, the method finds the minimizer of the objective function, and when the pseudo-time stops at $1$, the ensemble distribution of the particles resembles, in some sense, the posterior distribution in the linear setting. The ideas trace back further to Ensemble Kalman Filter and the associated analysis~\cite{Evensen_enkf,Reich2011}, but to today, when viewed as a sampling method, why EKI works, and in what sense with what rate the method converges is still largely unknown.

In this paper, we analyze the continuous version of EKI, a coupled SDE system, and prove the mean field limit of this SDE system. In particular, we will show that 1. as the number of particles goes to infinity, the empirical measure of particles following SDE converges to the solution to a Fokker-Planck equation in Wasserstein 2-distance with an optimal rate, for both linear and weakly nonlinear case; 2. the solution to the Fokker-Planck equation reconstructs the target distribution in finite time in the linear case, as suggested in~\cite{Iglesias_2013}.
\keywords{Ensemble Kalman Inversion, Wasserstein metric, mean-field limit, Fokker-Planck equation}
\end{abstract}

\maketitle
\section{Introduction}
How to sample from a target distribution is a central challenge in Bayesian inverse problems, especially when the to-be-reconstructed parameter lives on a high dimensional space. Suppose a $1000$-dimensional parameter needs to be reconstructed, and we have a budget of making $10,000$ samples, then how do we design algorithms so that these $10,000$ samples look like they are i.i.d. samples from the posterior distribution?

There are abundant studies in this direction. Traditional methods such as Markov chain Monte Carlo (MCMC) like Metropolis Hastings type algorithm, and sequential Monte Carlo (SMC) have garnered a large amount of investigations both on the theoretical and numerical sides \cite{Doucet2001,Mont,SMont}. Newer methods such as stein variational gradient descent (SVGD) based on Kernelized Stein Discrepancy~\cite{NIPS2016_6338}, the ensemble Kalman inversion (EKI), the ensemble Kalman sampling method (EKS)~\cite{Stuart_gradient,ding2019meanfield_EKS} quickly drew attention from many related areas. There are advantages and disadvantages associated with each method.

In this paper, we study Ensemble Kalman Inversion (EKI) method in depth~\cite{Iglesias_2013}. The method can be viewed as one step in the popular Ensemble Kalman filter (EnKF) method. EnKF was introduced initially for dynamical systems in \cite{DAEnKF,Evensen_enkf,Ghil1981,Evensen2003,firstEnkf}: one sequentially mixes in newly available data and evolve the probability distribution of the to-be-reconstructed parameters along the evolution of the dynamical system \cite{LeGland,law_tembine_tempone}. In each step of EnKF, the method consists of a forecast stage, which amounts to evolving underlying dynamical systems, and the analysis stage, which amounts to adjusting the distribution of states. EKI only studies static problems: one is given a fixed set of data to reconstruct a fixed set of unknown parameters, and thus is comparable to the analysis stage of EnKF. Such connection was first documented in the beautiful paper of \cite{Reich2011} (and the references therein, e.g.~\cite{Bergemann_Reich10_local,Bergemann_Reich10_mollifier}, and was discussed in depth in~\cite{Iglesias_2013} where the authors fully developed the idea into an algorithm. The procedure is rather easy to understand: one i.i.d. samples a fixed number of particles according to the prior distribution and labels them the initial data at $t=0$. The particles are then pushed around according to certain dynamics in (pseudo-)time, hoping at $t=1$ the particles look like they are i.i.d. sampled from the posterior distribution.

The algorithm was designed on the discrete level, with $J$ particles moved around using stepsize $h$, and the number of time steps ($N$ in our paper) is naturally $N=1/h$ to ensure the pseudo-time stops at $1$. The continuous version of the algorithm (with $h\to 0$) represents $J$-coupled SDE systems, for which there are already a number of theoretical studies~\cite{SS,SS2,DCPS}. However, to the authors' understanding, despite some heuristic arguments~\cite{SS,SS2}, there has been no result discussing the $J\to\infty$ limit of the coupled SDE system, and in particular for practical reasons, how this limit connects with the target distribution.

In this paper we will give two results concerning this convergence.
\begin{itemize}
\item We will prove, both in the linear and weak-nonlinear case, the coupled SDE system converges to a Fokker-Planck equation with an optimal rate in Wasserstein 2-metric. The relevant results are Theorem 1 and 2, and the optimality is discussed after the statement of Theorem 1.
\item We will prove that the Fokker-Planck equation connects the prior distribution with the target posterior distribution only in the linear case. This is presented in Corollary 1. The nonlinear case can be vastly more complicated, as discussed in Section 4.2, also see~\cite{law_tembine_tempone}.
\end{itemize}

On the technical level, the first result amounts to showing the mean-field limit of the SDE system. Indeed, we largely rely on the classical Dobrushin's argument, which consists of constructing a ``bridging SDE" and compare the distance between the PDE with the bridging SDE, and the distance between the two SDE systems. The former is an established result in~\cite{Fournier2015}, and the latter amounts to bounding the flux and Brownian motion coefficients, and then looping it back for the Gr\"onwall inequality. The argument, despite being very popular in the mean-field community~\cite{Carrilo2011,Blob,Bolley_Carrillo,Sznitman} to deal with particle systems in chemistry and biology, has rarely been applied to investigate sampling methods. The only exception known to us is~\cite{LuLuNolen} in which the authors proved the continuous version of SVGD is the weak solution to a transport type equation whose equilibrium state at the infinite time is the target posterior distribution. However, due to the Gr\"onwall nature of the argument, the constant blows up in infinite time, while the convergence to the equilibrium requires infinite time. EKI, however, stops at finite time $t=1$, and thus the constant would be finite. Comparing to other mean-field problems emerging in chemistry/biology (such as Cucker-Smale model), the difficulty here mainly comes from the fact that the flux and diffusion coefficients rely on higher moments of the PDE solution, and thus we do not have properties such as Lipschitz continuity for the Gr\"onwall inequality to directly apply.

The way to overcome these technical difficulties is to employ the bootstrapping argument, namely, we assume the convergence is of certain rate, and a lemma (Lemma \ref{lem:bound_x}) is then derived to show that such rate can be tightened. One continues this tightening process till the maximum rate is achieved (Proposition \ref{thm:vj_uj}). The initial convergence rate can be as low as $0$, meaning one only needs the boundedness. This boundedness is shown in Lemma \ref{prop:highmomentforpac2}, Lemma \ref{prop:highmomentbound}, and Corollary \ref{cor:prebound}. Theorem \ref{thm:mean_field} and \ref{thm:moments} are then direct consequences of Proposition \ref{thm:vj_uj}, combined with Proposition \ref{thm:vj_FP}, which itself is a simple application of the celebrated theorem from~\cite{Fournier2015} (cited as Theorem 3 in this paper).

The second result amounts to direct derivation. The argument was hinted in multiple papers~\cite{Reich2011,Evensen2003,Iglesias_2013}, but we have not found explicit derivation in literature.

We would like to mention that in~\cite{Herty} the authors investigated the convergence of the moments using kinetic tools, a relevant class of methods for investigating the convergence of sampling methods; in~\cite{Tabak_schroedinger}, the authors drew the connection with the Schr\"odinger bridge problem, and in~\cite{reich_2019} the authors discuss the transition kernel's dependence in conjunction with dynamics versus analysis. These papers are not directly related to the results presented in this paper, but shed light to understanding of sampling in depth.

In Section~\ref{sec:EKI}, we give a quick overview of the method, and present the continuous version, the SDE of the algorithm. In Section~\ref{sec:results} we summarize our own result, Theorem~\ref{thm:mean_field} and Theorem~\ref{thm:moments}, and present the mean-field limit. In Section~\ref{sec:linear_nonlinear} we discuss the meaning of the result in the linear and nonlinear setting. Section~\ref{sec:mean_field_1} and~\ref{sec:mean_field_2} are dedicated to proving the main theorems. Some calculations are rather technical and we leave them in appendix.

\section{Ensemble Kalman Inversion setup and statement of our result}\label{sec:EKI}
The Ensemble Kalman Inversion (EKI) was initially proposed to be a gradient-free optimization method~\cite{Iglesias_2013}, but has been widely used to find samples that are approximately drawn i.i.d. from the target posterior distribution if one stops the method in finite time. Getting i.i.d. (or approximately i.i.d.) samples from an arbitrarily given target distribution is a challenging task, and obtaining it in finite time makes it even harder. We briefly review the process of the method.

Suppose $u\in\mathbb{R}^L$ is the to-be-reconstructed vector-parameter, and let $\mathcal{G}:\mathbb{R}^L\rightarrow\mathbb{R}^K$ be the parameter-to-observable map, namely:
\[
y=\mathcal{G}(u)+\eta\,,
\]
where $y\in\mathbb{R}^K$ collects the observed data with $\eta$ denotes the noise in the measurement-taking. The general inverse problem amounts to reconstructing $u$ from $y$. The Bayesian inverse problem amounts to reconstructing the distribution of $u$ given $y$ with assumption on the distribution of $\eta$. In this article we let $\eta\sim \mathcal{N}(0,\Gamma)$ be a Gaussian noise independent of $u$.

Denoting the loss functional $\Phi(\cdot;y):\mathbb{R}^L\rightarrow\mathbb{R}$ by
\[
\Phi(u;y)=\frac{1}{2}\left|y-\mathcal{G}(u)\right|^2_\Gamma\,,
\]
where $|\ \cdot\ |_\Gamma:=|\Gamma^{-\frac{1}{2}}\ \cdot\ |\,$. The Bayes' theorem states that the posterior distribution is the (normalized) product of the prior distribution and the likelihood function:
\begin{equation}\label{GT}
\mu_\pos(u)=\frac{1}{Z}\exp{\left(-\Phi(u;y)\right)}\mu_0(u)\,,
\end{equation}
where
\[
Z:=\int_{\mathbb{R}^L}\exp\left(-\Phi(u;y)\right)\mu_0(u)du\,.
\]
Here $Z$ is the normalization factor, $\exp\left(-\Phi(u;y)\right)$ is the likelihood function and $\mu_0$ is the prior density function that collects people's prior knowledge about the distribution of $u$ (suppose it is absolutely continuous with respect to Lebesgue measure for now). This so-called posterior distribution represents the probability measure of the to-be-reconstructed parameter $u$, blending the prior knowledge and the collected data $y$, taking $\eta$, the measurement error into account. See more details in~\cite{Dashti2017,stuart_2010}.

\subsection{Ensemble Kalman Inversion}
The solution of the Bayesian inverse problem is given by~\eqref{GT}, and in practice, one still needs to generate a number of samples that represent this target distribution. These samples can later on be used to estimate quantities such as moments.

There are a large number of algorithms developed towards this end, including the classical MCMC (Markov chain Monte Carlo) method, Sequential Monte Carlo method, and the newly developed SVGD (Stein variational Gradient Descent), birth-death Langevin, Ensemble Kalman Sampling, among many others \cite{NIPS2016_6338,Stuart_gradient,lu2019accelerating}. It is not our intension to compare these different methods. In this paper, we would like to focus on Ensemble Kalman Inversion and give a sharp estimate to the convergence rate of the method. We emphasize that EKI was developed to be an optimization method, and is widely used as a sampling method. We mainly discuss its performance as a sampling method in this article.

In the setup of EKI, a fixed number of particles are sampled according to the prior distribution first, call them $\{u^j_0\}_{j=1}^J$ (with $0$ in the subscript standing for initial time), and these particles are then propagated according to a certain flow defined by the ensemble mean and covariance in pseudo-time. Hopefully by the pseudo-time achieves $1$, the particles can be seen as i.i.d. drawn from the posterior distribution. The algorithm is summarized in Algorithm~\ref{alg:EKI}.

\begin{algorithm}[htb]
\caption{\textbf{Ensemble Kalman Inversion}}\label{alg:EKI}
\begin{algorithmic}
\State \textbf{Preparation:}
\State 1. Input: $J\gg1$; $h\ll1$ (time step); $N=1/h$ (stopping index); $\Gamma$; and $y$ (data).
\State 2. Initial: $\{u^j_0\}$ sampled from initial distribution induced by density function $\mu_0$.

\State \textbf{Run: } Set time step $n=0$;
\State \textbf{While} $n<N$:

\noindent 1. Define empirical means and covariance:
\begin{equation}\label{eqn:ensemble_alg}
\begin{aligned}
\quad&\overline{u}_n=\frac{1}{J}\sum^J_{j=1}u^j_n\,,\ \text{and}\ \overline{\MCG}_n=\frac{1}{J}\sum^J_{j=1}\MCG(u^j_n)\,,\\
\quad&C^{pp}_n(u)=\frac{1}{J}\sum^J_{j=1}\left(\MCG(u^j_n)-\overline{\MCG}_n\right)\otimes \left(\MCG(u^j_n)-\overline{\MCG}_n\right)\,,\\
\quad&C^{up}_n(u)=\frac{1}{J}\sum^J_{j=1}\left(u^j_n-\overline{u}_n\right)\otimes \left(\MCG(u^j_n)-\overline{\MCG}_n\right)\,.
\end{aligned}
\end{equation}

\noindent2. Artificially perturb data (with $\xi^j_{n+1}$ drawn $i.i.d.$ from $\mathcal{N}(0,h^{-1}\Gamma)$):
\[
\quad y^j_{n+1}=y+\xi^j_{n+1},\quad\forall 1\leq j\leq J\,.
\]

\noindent3. Update (set $n\to n+1$)
\begin{equation}\label{eqn:update_ujn}
\begin{aligned}
&\quad r^j_n=y^{j}_{n+1}-\MCG(u^j_n),\\
&\quad u^j_{n+1}=u^j_n+C^{up}_n(u_n)\left(C^{pp}_n(u_n)+h^{-1}\Gamma\right)^{-1}r^j_n\,,
\end{aligned}
\end{equation}
for all $1\leq j\leq J$.
\State \textbf{end}
\State \textbf{Output:} $\{u^j_N\}$.
\end{algorithmic}
\end{algorithm}

Prior to running the algorithm, one first specifies the number of samples needed (denote by $J$), and the number of steps one can take (denote by $N$). The time-step size, then is simply $h=1/N$. This is to ensure $t=1$ is the final time. So in total, there are two parameters in the algorithm:
\begin{itemize}
\item[1:] The pseudo-time-step $h$.
\item[2.] The number of particles $J$.
\end{itemize}

Along the evolution, at each time step, one computes the sample mean and covariance in~\eqref{eqn:ensemble_alg}, and uses them to move the samples around according to~\eqref{eqn:update_ujn}.

Upon finishing the algorithm in $N$ steps, one obtains a list of particles $\{u^j_N\}_{j=1}^J$ and defines the ensemble distribution:
\begin{equation}\label{eqn:ensemble}
M_{u}=\frac{1}{J}\sum^J_{j=1}\delta_{u^j_N}\,.
\end{equation}
It is our goal, in this article to show in both linear and nonlinear setup, when and how $M_u$ approximates target posterior distribution induced by posterior density function $\mu_\pos$. 

There are two parameters in the algorithm, and thus the convergence result of the algorithm to the posterior distribution should be established in the $h\to0$ and $J\to\infty$ limit. The $h\to0$ limit was discussed in~\cite{SS}, also see our Section 2.2, and in this paper we study the $J\to\infty$ limit.

\begin{remark}
Four comments are in order:
\begin{itemize}
\item[1.] We emphasize that $N$ and $h$ satisfy a certain relation: $Nh = 1$, and thus $N$ is not a free parameter. This fact is easily overlooked. In fact, in all the previous theoretical studies that we found~\cite{SS,DCPS}, people have been looking for convergence result where $h\to0$ first and $N\to\infty$ afterwards. Namely it is
\[
\lim_{N\to\infty}\lim_{h\to 0}\quad\text{instead of}\quad\lim_{Nh=1,h\to0}
\]
that has been studied. These works lay the theoretical foundation for ours, and builds wellposedness theory for the underlying SDE, but we would like to emphasize, however, that the two limits do not commute. Exactly for this reason, when one considers $\lim_{Nh=1,h\to0}$, a posterior distribution is obtained, but when the two limits are taken separately, the ``collapsing" phenomenon is observed~\cite{SS,Iglesias_2013}. In this article, we stick to the finite time $t=Nh=1$ regime.
\item[2.] We do not aim at comparing different methods, but one immediate advantage of this method over MCMC or other classical sampling method is worth of mentioning: in this method, the number of samples are fixed, and the number of steps are also fixed. So instead of tracing the error in time and terminating the process on-the-fly whenever tolerance is met, the number of particles is pre-set, and thus the numerical cost is known ahead of the computation. Indeed, exactly because of this, the error analysis is rather crucial: based on the error analysis, one can pre-determine the proper values of $J$ and $h$.
\item[3.] EKI shares some similarity with a very famous data assimilation method called Ensemble Kalman Filter~\cite{Evensen_enkf}, which was itself derived from Kalman filter with the mean and the covariance replaced by their ensemble versions. One main difference between EKI and EnKF is that EKI looks for solution to a static problem, and the dynamics is built in pseudo-time. EnKF, however, tries to blend information from the underlying dynamics, characterized by ODE/PDE/SDE, and the collected data, using the Bayesian formulation. The time in EnKF is real. A beautiful set of analysis can be found in~\cite{LeGland,law_tembine_tempone,Ernst}. These works provide theoretical studies in the ensemble Kalman framework. However, these results consider discrete case where the time stepsize $h=1$. On the contrary, we study the continuum limit with $h\rightarrow0$, and a lot of technicalities are associated with SDE's mean-field limit analysis, making the previous results not particularly useful in our setting.


\item[4.] Similar to the EnKF, EKI also tries to translate particles from one distribution to another, and records only the first two moments (mean and covariance). If the distribution fails to be a Gaussian along the evolution, information carried by the higher moments is simply removed from the system, leading to numerical error unavoidably. If the nonlinearity is weak, higher moments could be potentially bounded and there is still hope to control the EKI's mean-field limit. We will explain this in better detail in Section 3, when we present the weakly nonlinear assumption in~\eqref{linear}.
\end{itemize}
\end{remark}

\subsection{Continuum limit and dynamical system of $\{u_t^j\}$}
EKI is an algorithm with discrete-in-time updates. Formally let the time step $h\to 0$, equation~\eqref{eqn:update_ujn} becomes:
\begin{equation}\label{eqn:SDE_general}
du^j_t=C^{up}(u_t)\Gamma^{-1}\left(y-\MCG(u^j_t)\right)dt+C^{up}(u_t)\Gamma^{-\frac{1}{2}}dW^j_t\,,
\end{equation}
where
\[
C^{up}(u)=\frac{1}{J}\sum^J_{j=1}\left(u^j-\overline{u}\right)\otimes \left(\MCG(u^j)-\overline{\MCG}\right)\,
\]
with
\[
\overline{u}=\frac{1}{J}\sum^J_{j=1}u^j,\quad \overline{\MCG}=
\frac{1}{J}\sum^J_{j=1}\MCG(u^j)\,.
\]
Here $\otimes$ means the first argument is viewed as a column vector while the second is viewed as the row vector. 

Indeed, as shown in~\cite{SS,Blomker}, the method~\eqref{eqn:update_ujn} can be viewed as the Euler-Maruyama discretization of the SDE.

Let $\Omega$ be the sample space and $\MCF_0$ being the $\sigma$-algebra: $\sigma\left(u^j(t=0),1\leq j\leq J\right)$, then the filtration is introduced by the dynamics:
\begin{equation*}
\MCF_t=\sigma\left(u^j(t=0),W^j_s,1\leq j\leq J,s\leq t\right)\,.
\end{equation*}

In~\cite{DCPS}, the authors showed the wellposedness of the SDE system under the linear assumption ($\mathcal{G}=Au$). The techniques, when combined with boundedness of moments, should work even when $\mathcal{G}$ is nonlinear. In the later section (in particular, Lemma 2), we will prove the boundedness of the moments. However, how to explicitly incorporate these with the techniques in~\cite{DCPS} for the wellposedness is beyond the focus of the current paper. In~\cite{SS,Blomker}, the authors formally derive the continuum limit of the method and arrived at the SDE. The proof has not been made rigorous. Indeed for the convergence of the Euler-Maruyama discretization, strong assumptions are imposed on the coefficients (transport and Brownian motion), and the nonlinearity induced in the covariance matrix makes the proof highly nontrivial. We believe under certain condition on the target distribution, this could be made possible, but it is also not directly related to deriving and proving the mean-field limit, and will be omitted from the current paper. A similar result under the EnKF framework~\cite{lange2019continuous} could potentially be useful in this direction.

In this paper, we start with the SDE, and we will analyze its mean-field limit as $J\to\infty$ in the Wasserstein-2 metric. The limit is characterized by a Fokker-Planck (FP) type equation, and we will show, in the linear setting, such FP equation recovers the posterior distribution and in the nonlinear setting, it deviates from the posterior distribution by a weight factor. 

\section{Main theorem and mean field limit}\label{sec:results}
We present our main theorem in this section.

To do so we first unify the notations. In the paper we denote $\mathbb{E}$ the expectation in the probability space $\left(\Omega,\MCF_t,\mathbb{P}\right)$ and often use $\rho_t$ as a short notation for $\rho(t,u)$. For any vectors $\{m^j\}^J_{j=1}$ and $\{n^j\}^J_{j=1}$, we denote
\[
\overline{m}=\frac{1}{J}\sum^J_{j=1}m^j
\]
and
\[
\Cov_{m,n}=\frac{1}{J}\sum^J_{j=1}\left(m^{j}-\overline{m}\right)\otimes\left(n^{j}-\overline{n}\right)\,,
\]
and denote $\mathrm{Cov}_m=\Cov_{m,m}$. Here $\otimes$ means the first argument is viewed as a column vector while the second is viewed as the row vector. Similarly, for any probability density function $\rho$ and function $g$, we denote
\[
\EE_{\rho}=\int_{\mathbb{R}^L}u\rho(u)du,\quad \EE_{g,\rho}=\int_{\mathbb{R}^L}g(u)\rho(u)du\,,
\]
\[
\Cov_{\rho}=\int_{\mathbb{R}^L}\left(u-\mathbb{E}_\rho\right)\otimes \left(u-\mathbb{E}_\rho\right)\rho(u)du\,,
\]
and
\[
\Cov_{\rho,g}=\int_{\mathbb{R}^L}\left(u-\mathbb{E}_\rho\right)\otimes \left(g(u)-\mathbb{E}_{g,\rho}\right)\rho(u)du\,.
\]
Apparently $\Cov_{g,\rho}=\Cov^\top_{\rho,g}$. 

The distance we use to quantify the ``smallness" is the Wasserstein 2-metric:
\begin{definition} Let $\upsilon_1,\upsilon_2$ be two probability measures in $\left(\mathbb{R}^L,\mathcal{B}_{\mathbb{R}^L}\right)$, then the $W_2$-Wasserstein distance between $\upsilon_1,\upsilon_2$ is defined as
\begin{equation*}
W_2(\upsilon_1,\upsilon_2):=\left(\inf_{\gamma\in\Gamma(\upsilon_1,\upsilon_2)}\int_{\mathbb{R}^L\times\mathbb{R}^L}|x-y|^2d\gamma(x,y)\right)^{\frac{1}{2}},
\end{equation*}
where $\Gamma(\upsilon_1,\upsilon_2)$ denotes the collection of all measures on $\mathbb{R}^L\times\mathbb{R}^L$ with marginals $\upsilon_1$ and $\upsilon_2$ for $x$ and $y$ respectively. Here $\upsilon_i$ can be either general probability measures or the measures induced by probability density functions $\upsilon_i$.
\end{definition}

We also assume weak nonlinearity, meaning there is a matrix $A\in\mathcal{L}(\mathbb{R}^L,\mathbb{R}^K)$ such that

\begin{equation}\label{linear}
\mathcal{G}(u)=Au+\Rm(u)\,,
\end{equation}
where $\Rm(u):\mathbb{R}^L\to\mathbb{R}^K$ is a smooth bounded function satisfying
\begin{equation*}
\text{Range}(\Rm)\perp_{\Gamma^{-1}}\text{Range}(A),\ \left|\Rm(u)\right|+\left|\nabla_u\Rm(u)\right|\leq M\,,
\end{equation*}
with some constant $M>0$ in $\mathbb{R}^L$, and $a\perp_{\Gamma^{-1}}b$ means $a^\top \Gamma^{-1}b=0$ and $a^\top$ is to the take transpose of $a$. This assumption plays a crucial role in the later proofs: it eliminates the cross-terms such as $\Rm^\top\Gamma^{-1}A$ in the posterior distribution, and thus put $\Rm$ entirely in the perpendicular direction of $\text{Range}(A)$. The $\Rm^\top\Gamma^{-1}\Rm$ terms are then controlled using the boundedness condition, boiling the analysis down to the linear situation.

We further denote the ``closest" solution of the linear component to be $u^\dagger$, and $r$ the corresponding noise, then
\begin{equation}\label{eqn:u_dagger}
y=Au^\dagger+r,\ \text{with}\ r^\top\Gamma^{-1}\text{range}(A)=0\,,
\end{equation}
then the loss functional is also explicit:
\[
\begin{aligned}
\Phi\left(u;y\right)=&\,\frac{1}{2}\left(u^\dagger-u\right)^\top A^\top \Gamma^{-1}A(u^\dagger-u)\\
&+\frac{1}{2}\left(r-\Rm(u)\right)^\top\Gamma^{-1}\left(r-\Rm(u)\right)\,,
\end{aligned}
\]
where we used the fact that $\Rm\perp_{\Gamma^{-1}}A$, $r\perp_{\Gamma^{-1}}A$.

Under such weakly nonlinear assumption~\eqref{linear}, the dynamical system of $\{u^j_t\}$, written in~\eqref{eqn:SDE_general} can be expanded:
\begin{equation}\label{eqn:SDE}
\begin{aligned}
{du^{j}_t}=&\,\mathrm{Cov}_{u_t,u_t}A^\top  \Gamma^{-1}A\left(u^\dagger-u^{j}_t\right)dt\\&+\mathrm{Cov}_{u_t,u_t}A^\top  \Gamma^{-\frac{1}{2}}dW^{j}_t\\
&+\mathrm{Cov}_{u_t,\Rm}\Gamma^{-1}\left(r-\Rm(u)\right)dt\\
&+\mathrm{Cov}_{u_t,\Rm}\Gamma^{-\frac{1}{2}}dW^{j}_t\,.
\end{aligned}
\end{equation}

Our main theorem states as the following:
\begin{theorem}[Main result 1: mean-field limit]\label{thm:mean_field}
Under the weakly nonlinear assumption~\eqref{linear}, the mean field limit of $M_{u_t}$ is the probability distribution induced by $\rho(t,u)$. Here $M_{u_t}$ is the ensemble distribution of $\{u^j_t\}$ as defined in~\eqref{eqn:ensemble} and $\rho(t,u)$ is the strong solution to the following Fokker-Planck equation:
\begin{equation}\label{eqn:muPDE}
\left\{\begin{aligned}
\partial_t\rho&=-\nabla_u\cdot\left(\left(y-\mathcal{G}(u)\right)^\top \Gamma^{-1}\mathrm{Cov}_{\mathcal{G},\rho_t}\rho\right)\\
&\quad\,+\frac{1}{2}\mathrm{Tr}\left(\mathrm{Cov}_{\rho_t,\mathcal{G}}\Gamma^{-1}\mathrm{Cov}_{\mathcal{G},\rho_t}\mathcal{H}_u(\rho)\right)\,,\\
\rho(0,u)&=\mu_0(u)
\end{aligned}\right.
\end{equation}
where $\mu_0$ is the prior density function, $\mathcal{H}_u(\rho)$ is Hessian of $\rho$.

More specifically, assume $\mu_0$ is $\mathcal{C}^2$, and for any $p>0$, $\mu_0$ satisfies
\[
\int_{\mathbb{R}^L}|u|^p\mu_0(u)du=C_p<\infty\,.
\]
If $\{u^j_0\}$ are i.i.d. sampled from the measure induced by $\mu_0$, then for any $t<\infty$ and any $\epsilon>0$, there is a constant $C_\epsilon(t)$ independent of $J$ such that: 
\begin{equation*}
\mathbb{E}\left(W_2(M_{u_t},\rho(t))\right)\leq C_\epsilon(t)
\left\{
\begin{aligned}
&J^{-\frac{1}{2}+\epsilon},\ L\leq 4\\
&J^{-2/L},\ L>4
\end{aligned}\,.
\right.
\end{equation*}
\end{theorem}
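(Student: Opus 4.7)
The plan is to follow the classical Dobrushin coupling strategy, decomposing the problem via a triangle inequality in the Wasserstein $2$-metric. I would introduce a bridging SDE system $\{v^j_t\}_{j=1}^J$ consisting of $J$ i.i.d.\ copies of the nonlinear McKean--Vlasov process driven by the same Brownian motions $W^j_t$ and starting from the same initial samples $u^j_0$, so that each $v^j_t$ has marginal law $\rho_t$. By the triangle inequality,
\[
\mathbb{E}\left(W_2(M_{u_t},\rho_t)\right)\leq \mathbb{E}\left(W_2(M_{u_t},M_{v_t})\right)+\mathbb{E}\left(W_2(M_{v_t},\rho_t)\right),
\]
reducing the task to two independent pieces: the stability of the interacting system against its McKean--Vlasov limit, and the empirical-measure convergence of i.i.d.\ samples to their common law.

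For the second term, since $\{v^j_t\}$ are i.i.d.\ draws from $\rho_t$, the theorem of Fournier--Guillin (cited as Theorem 3 in the paper) immediately yields the dimension-dependent rates $J^{-1/2+\epsilon}$ for $L\leq 4$ and $J^{-2/L}$ for $L>4$, provided $\rho_t$ has sufficient finite moments. This last fact I would establish separately through a moment-propagation estimate on the Fokker--Planck equation, using the assumed arbitrary moment bounds on the prior $\mu_0$.

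The heart of the proof is controlling the coupling term via
\[
\mathbb{E}\left(W_2(M_{u_t},M_{v_t})^2\right)\leq \frac{1}{J}\sum_{j=1}^J \mathbb{E}\,|u^j_t-v^j_t|^2.
\]
Applying It\^o's formula to $|u^j_t-v^j_t|^2$ and using the explicit SDE~\eqref{eqn:SDE}, the drift and diffusion differences split into two categories: differences of the form $\Cov_{u_t,u_t}-\Cov_{\rho_t}$ and $\Cov_{u_t,\Rm}-\Cov_{\rho_t,\Rm}$, which compare empirical covariances to their true counterparts, and differences of the form $\mathcal{G}(u^j_t)-\mathcal{G}(v^j_t)$, which provide the Gr\"onwall-type feedback. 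The first category is a genuine statistical error to be bounded with the aid of uniform-in-$J$ moment bounds on the particles, while the second is controlled via the weak-nonlinearity assumption~\eqref{linear}, which gives Lipschitz bounds on $\mathcal{G}$ through $A$ and the bounded gradient of $\Rm$.

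The principal difficulty, and the reason a naive Dobrushin argument fails, is that the coefficients are quadratic functionals of the entire particle cloud rather than Lipschitz in a single particle, so they cannot be bounded directly by $|u^j_t-v^j_t|$ alone. Following the road map announced in the introduction, I would resolve this by bootstrapping: first establish uniform-in-$J$ bounds on arbitrarily high moments of both $\{u^j_t\}$ and $\{v^j_t\}$ (the content of Lemma~\ref{prop:highmomentforpac2}, Lemma~\ref{prop:highmomentbound}, and Corollary~\ref{cor:prebound}), so that all covariance differences can be estimated in $L^p$ for every $p$. Starting from a crude bound $\mathbb{E}\,|u^j_t-v^j_t|^2=O(1)$, insert it into the covariance-difference estimates to obtain a strictly sharper rate via a Gr\"onwall closure (Lemma~\ref{lem:bound_x}), and iterate until the exponent saturates at the Monte--Carlo floor $J^{-1}$ for the mean-squared coupling error (Proposition~\ref{thm:vj_uj}). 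Combined with the Fournier--Guillin bound, this yields the claimed rates. The main obstacle will be the bookkeeping in the bootstrap iteration: at each step the constants depend on moments of increasing order, and one must verify that the tightening procedure genuinely converges to the optimal exponent rather than stalling at a suboptimal rate, and that all constants remain finite on the compact time interval of interest.
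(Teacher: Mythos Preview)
Your proposal is correct and mirrors the paper's proof almost exactly: the same Dobrushin triangle-inequality split via the bridging system $\{v^j_t\}$, Fournier--Guillin for $\mathbb{E}(W_2(M_{v_t},\rho_t))$, and the moment-bound-plus-bootstrap argument (Lemmas~\ref{prop:highmomentforpac2}, \ref{prop:highmomentbound}, Corollary~\ref{cor:prebound} feeding into Proposition~\ref{thm:vj_uj}) for the coupling term. The only refinement the paper adds that you did not mention is that the bootstrap is run in two layers---first on the projected observables $\tilde{x}^j_t=\Gamma^{-1/2}A(u^j_t-v^j_t)$ (Lemma~\ref{lem:bound_tx}), then on the full difference $x^j_t$ (Lemma~\ref{lem:bound_x})---but this is a technical device to close the estimates and does not alter the strategy you outlined.
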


The significance of the result is apparent. 1. When the number of samples $J$ is big enough, the ensemble distribution of $\{u^j_t\}$, the continuous version of EKI can be viewed approximately the solution to the Fokker-Planck equation~\eqref{eqn:muPDE}. So to analyze the long time large sample properties of EKI is boiled down to analyzing a Fokker-Planck equation~\eqref{eqn:muPDE}. The analysis for the latter is very rich, and the literature encompasses the wellposedness, the existence of the equilibrium and the convergence rate in time. All these could direct us in better understanding the algorithm. 2. We give the specific rate of convergence. For $L\leq 4$ in particular, the convergence rate is essentially $J^{-\frac{1}{2}}$. This is the optimal rate one can hope for from a Monte Carlo sampling method. For the case $L> 4$, we believe the result is also optimal. Indeed, as will shown in Section 4, by setting up a dynamical system $\{v^j_t\}$ that strictly follow the flow of the PDE, one expects the best representation of the PDE on the particle level, but yet, $W_2(M_v,\rho)$ is at best of $J^{-2/L}$, according to~\cite{Fournier2015}. So the theorem above is essentially saying that $\{u^j_t\}$, while being accessible, is not worse than $\{v^j_t\}$, and thus obtains the best possible convergence rate.

We do have to mention, however, the theorem quantifies the Wasserstein distance. It is a very strong measure. In practice, it is sufficient to have a number of particles that can characterize the weak convergence. For this practical purpose, we also show the following theorem:
\begin{theorem}
[Main result 2: weak convergence]\label{thm:moments}
Under the weakly nonlinear assumption~\eqref{linear}, $M_{u_t}$ weakly converge to the probability distribution induces by $\rho(t,u)$ with the optimal rate, namely: given any $l$-Lipschitz function $f$, for any $\epsilon>0$, there is a constant $C_\epsilon(l,f(\vec{0}),t)$ independent of $J$ such that: for any $t<\infty$
\begin{equation}\label{weakconvergence}
\begin{aligned}
&\left(\EE\left|\int f(u)\left[M_{u_t}-\rho(t,u)\right] du\right|^2\right)^{\frac{1}{2}}\\
 \leq &C_\epsilon(l,f(\vec{0}),t)J^{-\frac{1}{2}+\epsilon}
\end{aligned}\,.
\end{equation}
Here $M_{u_t}$ is the ensemble distribution~\eqref{eqn:ensemble} and $\rho$ solves~\eqref{eqn:muPDE}.
\end{theorem}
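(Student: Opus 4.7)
The plan is to bypass the dimension-dependent Wasserstein rate of Theorem \ref{thm:mean_field} by exploiting the fact that testing against a single Lipschitz function only requires comparing sample means, not full transport costs. I would reuse the bridging particle system $\{v^j_t\}$ already constructed for the mean-field analysis, in which each $v^j_t$ is distributed according to $\rho(t,\cdot)$, the $v^j_t$'s are i.i.d.\ across $j$, and they are driven by the same Brownian motions as the $u^j_t$. Proposition \ref{thm:vj_uj} then supplies the sharp coupling bound $\EE|u^j_t-v^j_t|^2\le C_\epsilon(t)J^{-1+2\epsilon}$ uniformly in $j$, and this bound is dimension-free.

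With this coupling in hand, split the quantity of interest as
\begin{equation*}
\int f(u)\bigl[M_{u_t}-\rho(t,u)\bigr]du=T_1+T_2,
\end{equation*}
where $T_1=\tfrac{1}{J}\sum_{j=1}^J\bigl[f(u^j_t)-f(v^j_t)\bigr]$ is the coupling discrepancy and $T_2=\tfrac{1}{J}\sum_{j=1}^J f(v^j_t)-\int f(u)\rho(t,u)du$ is a pure Monte Carlo error. A triangle inequality in $L^2(\Omega)$ reduces the theorem to bounding $(\EE|T_1|^2)^{1/2}$ and $(\EE|T_2|^2)^{1/2}$ separately by $J^{-1/2+\epsilon}$.

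For $T_1$, the $l$-Lipschitz property together with Jensen's inequality yields $|T_1|^2\le \tfrac{l^2}{J}\sum_j|u^j_t-v^j_t|^2$; exchangeability of the pairs $(u^j_t,v^j_t)$ then reduces the expectation to $l^2\,\EE|u^1_t-v^1_t|^2$, which Proposition \ref{thm:vj_uj} bounds by $C_\epsilon(t)\,l^2\,J^{-1+2\epsilon}$. For $T_2$, independence of $\{v^j_t\}_j$ turns it into a centered sample mean of i.i.d.\ random variables, so $\EE|T_2|^2=\tfrac{1}{J}\mathrm{Var}(f(v^1_t))\le \tfrac{C}{J}\bigl(|f(\vec 0)|^2+l^2\,\EE|v^1_t|^2\bigr)$. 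The moment $\EE|v^1_t|^2=\int|u|^2\rho(t,u)du$ is finite on the finite time window by Proposition \ref{thm:vj_FP} together with the high-moment estimates established in Lemma \ref{prop:highmomentforpac2} and Corollary \ref{cor:prebound}, so $\EE|T_2|^2=O(J^{-1})$. Summing the two contributions yields the claimed rate, with the constant absorbing $l$, $f(\vec 0)$ and $t$.

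The main obstacle, to which the genuine work of the paper is devoted, is the sharp particle-level coupling bound in Proposition \ref{thm:vj_uj}: once that dimension-free $J^{-1+2\epsilon}$ estimate is in place, the present theorem is essentially a Monte Carlo plus Lipschitz bookkeeping exercise, and in particular it requires no new probabilistic input beyond the bootstrapping machinery already developed. The only subsidiary check is that $\rho(t,\cdot)$ retains a finite second moment on $[0,1]$, which the preparatory high-moment lemmas deliver.
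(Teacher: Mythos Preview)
Your proposal is correct and follows essentially the same route as the paper's own proof: split via the bridging particles $\{v^j_t\}$, bound the coupling term $T_1$ by the Lipschitz constant times the pathwise discrepancy from Proposition~\ref{thm:vj_uj}, and bound the Monte Carlo term $T_2$ by the variance of $f(v^1_t)$ using independence, with the variance controlled through the Lipschitz property and the second moment of $\rho_t$. The only cosmetic difference is that you invoke exchangeability to reduce to $\EE|u^1_t-v^1_t|^2$, whereas the paper works directly with the averaged bound $\tfrac{1}{J}\sum_j\EE|u^j_t-v^j_t|^2$ from~\eqref{Expectationdifference}; these are equivalent.
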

This result significantly strengthen the convergence rate, and eliminates the dimension $L$-dependence.

\section{Linear and nonlinear setups}\label{sec:linear_nonlinear}
Before proving the two theorems, we present here how to interpret them in linear and nonlinear setups.

\subsection{Linear setup}
This is the setup in which we consider $\Rm = 0$, meaning $\mathcal{G}(u) = Au$, and the initial condition $\mu_0$ is a Gaussian density function. When this happens, on one hand, the entire process of the FP evolution is a Gaussian process, and on the other, the posterior distribution is also a Gaussian, and thus one would expect the complete reconstruction.

Indeed let us follow~\cite{SS} and define:
\begin{equation}\label{actualmu}
\mu(t,u)=\frac{1}{Z(t)}\exp\left(-t\Phi(u;y)\right)\mu_0(u)\,,
\end{equation}
where $Z(t):=\int_{\mathbb{R}^L}\exp\left(-t\Phi(u;y)\right)\mu_0(u)du$ is the normalization factor, then it is clear that
\[
\mu(t=0,u) = \mu_0\,,\ \text{and}\ \mu(t=1,u) = \mu_\pos\,,
\]
meaning this new definition~\eqref{actualmu} finds a smooth transition that moves the prior distribution to the posterior, and exactly reconstructs our target distribution at precisely $t=1$. With more derivation, one can actually show this is a strong solution to the Fokker-Planck equation, meaning $\rho(t,u)=\mu(t,u)$ satisfies~\eqref{eqn:muPDE}, and $\rho(t=1,u)$ is the posterior density function under the linear assumption.

This quickly leads to a corollary of the main theorem:
\begin{corollary}\label{thm:mainresult}
Under assumption~\eqref{linear} with $\Rm(u)=\vec{0}$, and $\{u^j_0\}$ are i.i.d. sampled from a Gaussian distribution induced by density function $\mu_0(u)$, then for any $\epsilon>0$, there exists $J(\epsilon)>0$, such that for any $J>J(\epsilon)$
\begin{equation*}
\mathbb{E}(W_2(\mu_\pos(u),M_{u_1})\leq \epsilon\,,
\end{equation*}
where $M_{u_1}$, defined in~\eqref{eqn:ensemble_alg}, is the ensemble distribution of $\{u^j_{1}\}$, the SDE~\eqref{eqn:SDE} solution, and $\mu_\pos$ is the posterior density function induces the posterior distribution.
\end{corollary}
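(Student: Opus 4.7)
The plan is to combine Theorem~\ref{thm:mean_field} at $t=1$ with an explicit identification of the PDE solution $\rho(1,u)$ with the posterior density $\mu_\pos$. Since Theorem~\ref{thm:mean_field} already delivers $\mathbb{E}\bigl(W_2(M_{u_1},\rho(1))\bigr)\leq C_\epsilon(1) J^{-\alpha(L)}$ under the hypotheses (Gaussian $\mu_0$ satisfies the moment assumption, and the weakly nonlinear assumption~\eqref{linear} degenerates to the linear case when $\Rm=\vec{0}$), the only remaining task is to show that the tilted family
\[
\mu(t,u) \;=\; \frac{1}{Z(t)}\exp\bigl(-t\Phi(u;y)\bigr)\,\mu_0(u)
\]
defined in~\eqref{actualmu} is a strong solution of~\eqref{eqn:muPDE}, for then $\rho(t,u)\equiv\mu(t,u)$, and in particular $\rho(1,u)=\mu_\pos(u)$.

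For the identification step, I would exploit the fact that in the linear Gaussian setting $\mathcal{G}(u)=Au$ with $\mu_0=\mathcal{N}(m_0,C_0)$, the tilted density $\mu(t,\cdot)$ remains Gaussian for every $t\geq 0$, with mean $m(t)$ and covariance $C(t)$ obtained by completing the square in $-t\Phi(u;y)-\tfrac12(u-m_0)^\top C_0^{-1}(u-m_0)$. A short computation gives the Riccati-type identities
\[
\dot C(t) \;=\; -\,C(t)A^\top\Gamma^{-1}A\,C(t),\qquad \dot m(t) \;=\; C(t)A^\top\Gamma^{-1}\bigl(y-A\,m(t)\bigr),
\]
so that $\Cov_{\mu_t}=C(t)$. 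On one hand, differentiating $\log\mu(t,u)$ directly yields $\partial_t\mu = \bigl(\EE_{\mu_t}[\Phi]-\Phi(u;y)\bigr)\mu$. On the other, using $\Cov_{\mathcal{G},\mu_t}=A\,\Cov_{\mu_t}$ and $\nabla_u\mu=-C(t)^{-1}(u-m(t))\mu$, expanding the divergence and the Hessian terms in~\eqref{eqn:muPDE} produces exactly the same expression after invoking the Riccati identities above. This verifies that $\mu(t,u)$ is a classical solution of~\eqref{eqn:muPDE}, and uniqueness (which follows from the standard parabolic theory for~\eqref{eqn:muPDE} with smooth bounded Gaussian coefficients) forces $\rho=\mu$.

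With the identification in place, the conclusion is immediate: by Theorem~\ref{thm:mean_field} applied at $t=1$,
\[
\mathbb{E}\bigl(W_2(M_{u_1},\mu_\pos)\bigr) \;=\; \mathbb{E}\bigl(W_2(M_{u_1},\rho(1))\bigr) \;\leq\; C_\epsilon(1)\, J^{-\alpha(L,\epsilon)},
\]
where $\alpha(L,\epsilon)=\tfrac12-\epsilon$ for $L\leq 4$ and $\alpha(L)=2/L$ otherwise. Given any target $\epsilon>0$, one chooses $J(\epsilon)$ large enough so that $C_\epsilon(1)J^{-\alpha(L,\epsilon)}<\epsilon$ for all $J\geq J(\epsilon)$, which closes the corollary.

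The main obstacle is purely the PDE-level verification: although formally elementary, the right-hand side of~\eqref{eqn:muPDE} is \emph{nonlocal} in $\rho$ through $\Cov_{\mathcal{G},\rho_t}$, so one must carefully propagate the Gaussian ansatz in time and reconcile $\dot m(t),\dot C(t)$ with the nonlocal drift and diffusion. The analytic input from Theorem~\ref{thm:mean_field} enters as a black box and requires no additional work beyond checking that its hypotheses are satisfied by $\Rm\equiv\vec{0}$ and Gaussian $\mu_0$, both of which are automatic.
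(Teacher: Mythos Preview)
Your proposal is correct and follows essentially the same route as the paper: invoke Theorem~\ref{thm:mean_field} at $t=1$, then identify $\rho(1,\cdot)$ with $\mu_\pos$ by checking that the tilted family $\mu(t,u)=Z(t)^{-1}\exp(-t\Phi)\mu_0$ solves~\eqref{eqn:muPDE}. The only cosmetic difference is in how the PDE verification is organized: the paper computes $\partial_t\mu$, $\nabla_u\mu$, and $\mathcal{H}_u\mu$ directly and balances all four terms of~\eqref{eqn:muPDE} by hand, whereas you first extract the Riccati ODEs for $m(t),C(t)$ and then use them to match the two sides---both arguments exploit the same Gaussian structure and are equivalent in content. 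One small caveat: your appeal to ``standard parabolic theory'' for uniqueness is a bit glib since~\eqref{eqn:muPDE} is nonlocal through $\Cov_{\mathcal{G},\rho_t}$; the paper sidesteps this by simply asserting $\rho=\mu$ once $\mu$ is shown to be \emph{a} strong solution, so you are no less rigorous than the paper here, but neither argument actually closes the uniqueness gap.
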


The corollary is direct consequence of Theorem~\ref{thm:mean_field} and we omit the proof. To show that $\mu(t,u)$ is the solution to the PDE~\eqref{eqn:muPDE} amounts to calculating its time and first two derivatives in $u$ and plugging them in~\eqref{eqn:muPDE} to balance the terms out. For the completeness of the paper, we present the derivation briefly below. Without loss of generality, we assume $y=Au^\dagger$ with $r=0$.

Taking the time derivative, we have:
\begin{equation}\label{mut}
\partial_t\mu(t,u)=-\Phi\left(u;y\right)\mu(t,u)-\frac{\partial_t Z(t)}{Z(t)}\mu(t,u)\,,
\end{equation}
where, under the linearity assumption:
\begin{equation*}
\Phi\left(u;y\right)=\left(u^\dagger-u\right)^\top A^\top \Gamma^{-1}A(u^\dagger-u)/2\,,
\end{equation*}
and
\begin{equation*}
\begin{aligned}
\frac{\partial_tZ}{Z}=&\int -(u-\mathbb{E}_{\mu_t})^\top A^\top \Gamma^{-1}A(u-\mathbb{E}_{\mu_t})^\top /2\mu du\\
&+\int -(\mathbb{E}_{\mu_t}-u^\dagger)^\top A^\top \Gamma^{-1}A(\mathbb{E}_{\mu_t}-u^\dagger)^\top /2\mu du\\
=&-\text{Tr}\left[\mathrm{Cov}_{\mu_t} A^\top \Gamma^{-1}A\right]/2\\
&-\left(u^\dagger-\mathbb{E}_{\mu_t}\right)^\top A^\top \Gamma^{-1}A\left(u^\dagger-\mathbb{E}_{\mu_t}\right)/2\,.
\end{aligned}
\end{equation*}
Similarly the gradients in $u$ are:
\begin{equation*}
\begin{aligned}
\nabla_u\mu(t,u)=&tA^\top \Gamma^{-1}A(u^\dagger-u)\mu(t,u)\\
&+\Gamma^{-1}_0\left(u_0-u\right)\mu(t,u)
\end{aligned}\,,
\end{equation*}
and the hessian is:
\begin{equation*}
\begin{aligned}
\mathcal{H}_u\mu=(\mathrm{Cov}_{\mu_t})^{-1}\left(-I+(u-\mathbb{E}_{\mu_t})(u-\mathbb{E}_{\mu_t})^\top(\mathrm{Cov}_{\mu_t})^{-1}\right)\mu
\end{aligned}\,.
\end{equation*}

Putting them back into~\eqref{eqn:muPDE}, one has
\begin{equation*}
\begin{aligned}
\partial_t\mu&+\nabla_u\cdot\left(\left(u^\dagger-u\right)^\top A^\top \Gamma^{-1}A\mathrm{Cov}_{\mu_t}\mu\right)\\
&-\frac{1}{2}\mathrm{Tr}\left(\mathrm{Cov}_{\mu_t} A^\top\Gamma^{-1}A\mathrm{Cov}_{\mu_t}\mathcal{H}_u(\mu)\right)\\
=\partial_t\mu&+\left(u^\dagger-u\right)^\top A^\top \Gamma^{-1}A\mathrm{Cov}_{\mu_t}\nabla_u\mu\\
&+\nabla_u\cdot\left(\left(u^\dagger-u\right)^\top A^\top\Gamma^{-1}A\mathrm{Cov}_{\mu_t}\right)\mu\\
&-\frac{1}{2}\mathrm{Tr}\left(\mathrm{Cov}_{\mu_t} A^\top\Gamma^{-1}A\mathrm{Cov}_{\mu_t}\mathcal{H}_u(\mu)\right)\\
=\text{term I}&+\text{term II}+\text{term III}+\text{term IV}\,.
\end{aligned}
\end{equation*}

Term III becomes to $\text{Tr}\left[\mathrm{Cov}_{\mu_t} A^\top \Gamma^{-1}A\right]\mu$, and Term IV turns to:
\[
\begin{aligned}
&-\frac{1}{2}\mathrm{Tr}\left(\mathrm{Cov}_{\mu_t} A^\top\Gamma^{-1}A\mathrm{Cov}_{\mu_t}\mathcal{H}_u(\mu)\right)\\
=&\ \frac{1}{2}\mathrm{Tr}\left(\mathrm{Cov}_{\mu_t} A^\top\Gamma^{-1}A\right)\mu+\frac{1}{2}\left|A(u-\mathbb{E}_{\mu_t})\right|^2_{\Gamma}\mu\,.
\end{aligned}
\]
To handle term II, we have:
\begin{equation*}
\begin{aligned}
&\left(u^\dagger-u\right)^\top A^\top\Gamma^{-1}A\mathrm{Cov}_{\mu_t}\nabla_u\mu\\
=&t\left(u^\dagger-u\right)^\top A^\top\Gamma^{-1}A\mathrm{Cov}_{\mu_t} A^\top\Gamma^{-1}A(u^\dagger-u)\mu\\
&+\left(u^\dagger-u\right)^\top A^\top\Gamma^{-1}A\mathrm{Cov}_{\mu_t}\Gamma^{-1}_0\left(u_0-u\right)\mu\\
=&t\left(u^\dagger-u\right)^\top A^\top\Gamma^{-1}A(u^\dagger-u)\mu\\
&-\left(u^\dagger-u\right)^\top A^\top\Gamma^{-1}A\mathrm{Cov}_{\mu_t}\Gamma^{-1}_0\left(u^\dagger-u_0\right)\mu\\
=&t\left(u^\dagger-u\right)^\top A^\top\Gamma^{-1}A(u^\dagger-u)\mu\\
&-\left(u^\dagger-u\right)^\top A^\top\Gamma^{-1}A\left(u^\dagger-\mathbb{E}_{\mu_t}\right)\mu\,.
\end{aligned}
\end{equation*}

Adding all the terms up, we find the summation being $0$, making $\mu$ the strong solution to the PDE~\eqref{eqn:muPDE}.

\subsection{Nonlinear setup}
In the weakly nonlinear situation, Theorem~\ref{thm:mean_field} still holds true, however, $\mu(t,u)$, as defined in~\eqref{actualmu}, despite smoothly connects the prior and the target distribution, is no longer the solution to the PDE. Indeed, if we plug it in, define the operator
\[
\begin{aligned}
\mathcal{L}\left[\mu\right]=&\partial_t\mu(t,u)+\nabla_u\cdot\left(\left(y-\mathcal{G}(u)\right)^\top\Gamma^{-1}\mathrm{Cov}_{\mathcal{G},\mu_t}\mu\right)\\
&-\frac{1}{2}\mathrm{Tr}\left(\mathrm{Cov}_{\mu_t,\mathcal{G}}\Gamma^{-1}\mathrm{Cov}_{\mathcal{G},\mu_t}\mathcal{H}_u(\mu)\right)
\end{aligned}\,,
\]
we have $\mathcal{L}\mu \neq 0$ as it is in the linear case, but rather
\begin{equation*}\label{eqn:muPDE_nonlinear}
\mathcal{L}\left[\mu\right]=\left[\mathcal{R}_1(t,u)+\mathcal{R}_2(t,u)+\mathcal{R}_3(t,u)\right]\mu(t,u)\,.
\end{equation*}
The remaining term are:
\[
\begin{aligned}
\mathcal{R}_1(t,u)=&\frac{1}{2}\mathrm{Tr}\left\{\Cov_{\mathcal{G},\mathcal{G}}\Gamma^{-1}\right\}-\mathrm{Tr}\left\{\nabla \MCG(u)\Gamma^{-1}\mathrm{Cov}_{\mathcal{G},\mu_t}\right\}\\
&+\frac{1}{2}\mathrm{Tr}\left\{\mathrm{Cov}_{\mu_t,\mathcal{G}}\Gamma^{-1}\mathrm{Cov}_{\mathcal{G},\mu_t}\mathcal{V}(u)\right\}
\end{aligned}\,,
\]
\[
\begin{aligned}
\mathcal{R}_2(t,u)=&\ \frac{1}{2}\left(y-\overline{\MCG}\right)^\top\Gamma^{-1}\left(y-\overline{\MCG}\right)\\
&-\frac{1}{2}\left(y-\MCG(u)\right)^\top\Gamma^{-1}\left(y-\MCG(u)\right)\\
&+\left(y-\MCG(u)\right)\Gamma^{-1}\mathrm{Cov}_{\mathcal{G},\mu_t}\mathcal{V}(u)\\
&-\frac{1}{2}\mathcal{V}^\top(u)\mathrm{Cov}_{\mu_t,\mathcal{G}}\Gamma^{-1}\mathrm{Cov}_{\mathcal{G},\mu_t}\mathcal{V}(u)\,,
\end{aligned}
\]
\[
\mathcal{R}_3(t,u)=-\frac{t}{2}\mathrm{Tr}\left\{\mathrm{Cov}_{\mu_t,\mathcal{G}}\Gamma^{-1}\mathrm{Cov}_{\mathcal{G},\mu_t}\mathcal{W}(u)\right\}\,
\]
with
\[
\mathcal{V}(u)=t\left(\nabla \MCG(u)\right)^\top\Gamma^{-1}\left(y-\MCG(u)\right)-\Gamma^{-1}_0\left(u-u_0\right),
\]
\[
\mathcal{W}(u)\in\mathbb{R}^{L\times L},\ \text{with}\quad (\mathcal{W}(u))_{:,i}=\partial_i\nabla\MCG\Gamma^{-1}(y-\MCG(u))\,.
\]

This equation defers from the PDE by the three weight terms $\mathcal{R}_i$. In some sense, this is a negative result. It suggests that density of the mean field limit of $M_{u_t}$, proved to be $\rho(t,u)$, defers from $\mu(t,u)$ by the weight terms $\mathcal{R}_i$, that could potentially bring an $O(1)$ effects. The question then comes down to bounding the effects of $\mathcal{R}_i$ and showing them to be small in certain scenarios. This is, however, not within the realm of deriving and proving the mean-field limit, and is beyond the focus of this paper. More discussion can be found in~\cite{Ernst,ding2020ensemblecorrect,law_tembine_tempone}.

\section{Proof of Theorem~\ref{thm:mean_field}, Part I}\label{sec:mean_field_1}
We now start proving the theorem. For notation-wise simplicity, we consider $0\leq t\leq 1$, and all proofs can be easily extended to $1<t<\infty$. To a large extent, we rely on a ``bridge" to connect $\rho$, the solution to the PDE~\eqref{eqn:muPDE}, and the $\{u^j_t\}$ system, the solution to the SDE~\eqref{eqn:SDE}. The ``bridge" is another dynamical system, termed $\{v^j_t\}$ that follows the exact the same flow defined by~\eqref{eqn:muPDE}, meaning the coefficient in $\{v^j_t\}$ are defined by $\rho(t,u)$ and regarded as given a-priori.

Intuitively since $\{v^j_t\}$ follows the flow of the PDE, it carries the PDE information, and thus its ensemble distribution should be close to the measure induced by $\rho$. This is discussed in Proposition~\ref{thm:vj_FP}. $\{v^j_t\}$ inherits properties of $\rho$, such as boundedness of moments, as will be presented in Lemma~\ref{prop:highmomentforpac2}. Since $\{v^j\}$ and $\{u^j\}$ are both dynamical systems, the comparison is boiled down to the stability analysis for SDE systems, and this part of the result is presented in Proposition~\ref{thm:vj_uj}.

The proof of the theorem is thereby divided into two sections, here and the subsequent one: in this section, we show the closeness of $\{v^j_t\}$ and $\rho_t$, and in the following we show the closeness of $\{v^j_t\}$ and $\{u^j_t\}$. Both results are characterized in $W_2$-metric, and the combination of the two naturally leads to the proof of Theorem~\ref{thm:mean_field}, \ref{thm:moments}.

In this section in particular, we discuss the properties of the Fokker-Planck equation and give some estimates of the moments in Section~\ref{sec:FP_prior}. We then discuss $\{v^j_t\}$ system in Section~\ref{sec:vj_FP}.

\subsection{Properties of the Fokker-Planck equation}\label{sec:FP_prior}
We would like to show the boundedness of moments of $\rho(t,u)$, the solution to \eqref{eqn:muPDE}. We start with the covariance first:
\begin{lemma}\label{postvarianceestimate}
Under weakly nonlinear assumption \eqref{linear}, we have: for $0\leq t\leq 1$
\begin{equation}\label{eqn:covbound}
\|\Cov_{\rho_t}\|_2\leq C,\quad \|\Cov_{\rho_t,\mathcal{G}}\|_2\leq C\,,
\end{equation}
where $C$ is a constant independent of $t$ and $\rho(t,u)$ is the solution to \eqref{eqn:muPDE}.
\end{lemma}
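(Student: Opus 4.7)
The plan is to derive an evolution equation for the covariance matrix $\Cov_{\rho_t}$ directly from the Fokker-Planck equation \eqref{eqn:muPDE}, and then exploit the orthogonality built into the weakly nonlinear assumption \eqref{linear} to show that the evolution is monotone decreasing in the Loewner order. The bound on $\Cov_{\rho_t,\mathcal{G}}$ will then drop out as a corollary by splitting $\mathcal{G} = Au + \Rm$.

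First I would multiply \eqref{eqn:muPDE} by the test function $(u-\EE_{\rho_t})\otimes(u-\EE_{\rho_t})$ and integrate by parts (equivalently, use the moment equations for the associated McKean--Vlasov SDE with drift $\Cov_{\rho_t,\mathcal{G}}\Gamma^{-1}(y-\mathcal{G}(u))$ and diffusion $\Cov_{\rho_t,\mathcal{G}}\Gamma^{-1/2}$). A direct computation, combining the two drift contributions with the Itô correction $\Cov_{\rho_t,\mathcal{G}}\Gamma^{-1}\Cov_{\mathcal{G},\rho_t}$, collapses to
\[
\frac{d}{dt}\Cov_{\rho_t} \;=\; -\,\Cov_{\rho_t,\mathcal{G}}\,\Gamma^{-1}\,\Cov_{\mathcal{G},\rho_t}\,.
\]
Next, writing $\mathcal{G}=Au+\Rm$ gives $\Cov_{\rho_t,\mathcal{G}} = \Cov_{\rho_t}A^\top + \Cov_{\rho_t,\Rm}$. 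Inserting this and using the orthogonality $\Rm\perp_{\Gamma^{-1}}A$ (which kills $A^\top\Gamma^{-1}\Cov_{\Rm,\rho_t}$ and its transpose) reduces the right-hand side to
\[
\frac{d}{dt}\Cov_{\rho_t} \;=\; -\,\Cov_{\rho_t}A^\top\Gamma^{-1}A\,\Cov_{\rho_t} \;-\; \Cov_{\rho_t,\Rm}\Gamma^{-1}\Cov_{\Rm,\rho_t}\,.
\]
Both terms are symmetric negative semi-definite, so $\Cov_{\rho_t}\preceq \Cov_{\mu_0}$ in the Loewner order. Since the prior has finite moments of all orders, $\|\Cov_{\mu_0}\|_2<\infty$, and the first bound $\|\Cov_{\rho_t}\|_2\leq C$ follows uniformly on $[0,1]$.

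The second bound is immediate from the decomposition $\Cov_{\rho_t,\mathcal{G}}=\Cov_{\rho_t}A^\top+\Cov_{\rho_t,\Rm}$. The first summand is controlled by $\|\Cov_{\rho_t}\|_2\|A\|_2$, which we just bounded. For the second summand, the boundedness $|\Rm|\leq M$ gives $|\Rm(u)-\EE_{\Rm,\rho_t}|\leq 2M$, hence by Cauchy--Schwarz
\[
\|\Cov_{\rho_t,\Rm}\|_2 \;\leq\; 2M\,\EE_{\rho_t}|u-\EE_{\rho_t}| \;\leq\; 2M\,\sqrt{\mathrm{Tr}(\Cov_{\rho_t})} \;\leq\; 2M\sqrt{LC}\,,
\]
completing the proof.

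No step here is a serious obstacle; the only place that requires care is justifying the interchange of differentiation and integration when extracting the covariance ODE, which is standard once one has enough regularity on $\rho$ (and the weak-nonlinearity decomposition ensures coefficients remain controlled). The real content is the algebraic cancellation of the cross-terms coming from $\Rm\perp_{\Gamma^{-1}}A$, which is precisely why this assumption was introduced in \eqref{linear}; without it one would be left with $\Cov_{\rho_t}A^\top\Gamma^{-1}\Cov_{\Rm,\rho_t}$ terms of indefinite sign and monotonicity in the Loewner order would fail.
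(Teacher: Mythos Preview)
Your argument is correct and essentially the same approach as the paper's: derive the evolution of the covariance from the Fokker--Planck equation and show it is nonincreasing. The paper works at the scalar level, computing $\partial_t\int |u-\EE_{\rho_t}|^2\rho\,du=-\mathrm{Tr}\bigl(\Cov_{\rho_t,\mathcal{G}}\Gamma^{-1}\Cov_{\mathcal{G},\rho_t}\bigr)\leq 0$, while you work at the matrix level and obtain Loewner monotonicity; both yield the first bound. For $\|\Cov_{\rho_t,\mathcal{G}}\|_2$ the paper uses Cauchy--Schwarz together with the Lipschitz bound $|\mathcal{G}(u_1)-\mathcal{G}(u_2)|\leq\max(\|A\|_2,M)|u_1-u_2|$, rather than your split into $A$ and $\Rm$ pieces, but both are routine.

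One correction to your commentary: the orthogonality $\Rm\perp_{\Gamma^{-1}}A$ is \emph{not} needed here, and the paper does not invoke it in this lemma. The right-hand side $-\Cov_{\rho_t,\mathcal{G}}\Gamma^{-1}\Cov_{\mathcal{G},\rho_t}$ is automatically negative semi-definite because $\Gamma^{-1}\succ 0$ and $\Cov_{\mathcal{G},\rho_t}=(\Cov_{\rho_t,\mathcal{G}})^\top$, so it equals $-B^\top\Gamma^{-1}B\preceq 0$ with $B=\Cov_{\mathcal{G},\rho_t}$, regardless of any decomposition of $\mathcal{G}$. Your expansion into the two separate NSD pieces is therefore a detour, and your closing remark that ``the real content is the algebraic cancellation'' overstates the role of the orthogonality assumption for this particular lemma; it matters elsewhere in the paper (e.g.\ in controlling cross-terms in the SDE analysis) but not here.
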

\begin{proof}
First, by the weakly-nonlinear assumption~\eqref{linear}, there is an $M>0$:
\[
\left|\mathcal{G}(u_1)-\mathcal{G}(u_2)\right|\leq \max(\|A\|_2,M)|u_1-u_2|\,.
\]
Multiplying $\|u-\mathbb{E}_{\rho_t}\|^2$ on both sides of \eqref{eqn:muPDE} and take integral, we have
\begin{equation*}
\begin{aligned}
&\partial_t\int_{\mathbb{R}^K}\|u-\mathbb{E}_{\rho_t}\|^2\rho(t,u)du\\
=&\int_{\mathbb{R}^K}2\left(y-\mathcal{G}(u)\right)^\top \Gamma^{-1}\mathrm{Cov}_{\mathcal{G},\rho_t}\left(u-\mathbb{E}_{\rho_t}\right)\rho\\
&+\mathrm{Tr}\left(\mathrm{Cov}_{\rho_t,\mathcal{G}}\Gamma^{-1}\mathrm{Cov}_{\mathcal{G},\rho_t}\right)\rho du\,\\
=&\int_{\mathbb{R}^K}-2\left(\mathcal{G}(u)-\mathbb{E}_{\mathcal{G},\rho_t}\right)^\top \Gamma^{-1}\mathrm{Cov}_{\mathcal{G},\rho_t}\left(u-\mathbb{E}_{\rho_t}\right)\rho\\
&+\mathrm{Tr}\left(\mathrm{Cov}_{\rho_t,\mathcal{G}}\Gamma^{-1}\mathrm{Cov}_{\mathcal{G},\rho_t}\right)\rho du\,\\
=&\int_{\mathbb{R}^K}-\mathrm{Tr}\left(\mathrm{Cov}_{\rho_t,\mathcal{G}}\Gamma^{-1}\mathrm{Cov}_{\mathcal{G},\rho_t}\right)\rho du\leq 0\,,
\end{aligned}
\end{equation*}
which implies $\|\Cov_{\rho_t}\|_2\leq \|\Cov_{\rho_0}\|_2\leq C$. Furthermore, we also have
\[
\begin{aligned}
&\|\Cov_{\rho_t,\mathcal{G}}\|_2\leq \int_{\mathbb{R}^K}\|\left(u-\mathbb{E}_{\rho_t}\right)\left(\mathcal{G}(u)-\mathbb{E}_{\mathcal{G},\rho_t}\right)^\top\|_2\rho du\\
\leq &\int_{\mathbb{R}^K}\|\left(u-\mathbb{E}_{\rho_t}\right)\|_2\|\left(\mathcal{G}(u)-\mathbb{E}_{\mathcal{G},\rho_t}\right)\|_2\rho du\\
\leq &\left(\int_{\mathbb{R}^K}\|u-\mathbb{E}_{\rho_t}\|^2_2\rho du\right)^{\frac{1}{2}}\\
&\cdot\left(\int_{\mathbb{R}^K}\|\mathcal{G}(u)-\mathbb{E}_{\mathcal{G},\rho_t}\|_2^2\rho du\right)^{\frac{1}{2}}\\
\leq &\max(\|A\|_2,M)^{\frac{1}{2}}C\,,
\end{aligned}
\]
which proves \eqref{eqn:covbound}.\qed
\end{proof}

Such boundedness can be extended to higher moments:
\begin{lemma}\label{postervariance}
Let $\rho$ solve \eqref{eqn:muPDE} with initial condition $\mu_0$. If $\mu_0\in \mathcal{C}^2$ and has finite high moments, meaning for any $2\leq p<\infty$, there is a $C_{p,0}<\infty$ such that
\[
\int_{\mathbb{R}^L}|u|^p\mu_0(u)du=C_{p,0}<\infty\,.
\]
then under weakly nonlinear assumption \eqref{linear}, for any $2\leq p<\infty$, there is a constant $C_p<\infty$ such that: 
\begin{equation}\label{varianceestimation}
\begin{aligned}
&\int_{\mathbb{R}^L}|u-\mathbb{E}_{\rho_t}|^p\rho(t,u)du<C_p\,,\\
&\int_{\mathbb{R}^L}|u-u^\dagger|^p\rho(t,u)du<C_p\,,
\end{aligned}
\end{equation}
for all $0\leq t\leq 1$.
\end{lemma}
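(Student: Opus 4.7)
The plan is to derive a closed Gronwall-type differential inequality for the single scalar quantity $M_p(t) := \int_{\mathbb{R}^L} |u-u^\dagger|^p \rho(t,u)\,du$ for each fixed $p\ge 2$. First I would multiply the Fokker-Planck equation~\eqref{eqn:muPDE} by the test function $\phi(u):=|u-u^\dagger|^p$ (which is $\mathcal{C}^2$ since $p\ge 2$), integrate over $\mathbb{R}^L$, and move the derivatives off $\rho$ onto $\phi$ via integration by parts (once for the drift, twice for the diffusion), obtaining
\[
\tfrac{d}{dt}M_p(t) = \int \nabla_u\phi\cdot(y-\MCG(u))^\top\Gamma^{-1}\Cov_{\MCG,\rho_t}\rho\,du + \tfrac{1}{2}\int\mathrm{Tr}\bigl(\Cov_{\rho_t,\MCG}\Gamma^{-1}\Cov_{\MCG,\rho_t}\mathcal{H}_u(\phi)\bigr)\rho\,du,
\]
with the pointwise bounds $|\nabla_u\phi|\le p|u-u^\dagger|^{p-1}$ and $\|\mathcal{H}_u(\phi)\|_2\le p(p-1)|u-u^\dagger|^{p-2}$.

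Next I would estimate the coefficients. The preceding Lemma~\ref{postvarianceestimate} already supplies $\|\Cov_{\rho_t,\MCG}\|_2, \|\Cov_{\MCG,\rho_t}\|_2\le C$ uniformly on $[0,1]$, which is precisely what controls the nonlinear self-dependence of the PDE coefficients on $\rho$. Combining the decomposition $y=Au^\dagger+r$ from~\eqref{eqn:u_dagger}, the weakly-nonlinear structure~\eqref{linear}, and $|\Rm(u)|\le M$, one gets $|y-\MCG(u)|\le \|A\|_2|u-u^\dagger|+|r|+M\le C(1+|u-u^\dagger|)$. Plugging these in yields
\[
\tfrac{d}{dt}M_p(t) \le C_1\!\int |u-u^\dagger|^{p-1}(1+|u-u^\dagger|)\rho\,du + C_2\!\int |u-u^\dagger|^{p-2}\rho\,du.
\]
The elementary inequality $x^q\le 1+x^p$ for $0\le q\le p$, $x\ge 0$, then absorbs the lower-order moments into $M_p(t)$ itself, producing $\tfrac{d}{dt}M_p(t)\le C_3M_p(t)+C_4$.

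Gronwall's inequality on $[0,1]$, starting from the finite initial moment $M_p(0)\le C_{p,0}$, then gives $M_p(t)\le C_p$ with a constant independent of $t\in[0,1]$, which is the second inequality of~\eqref{varianceestimation}. The first inequality (the centered version) follows for free: Jensen gives $|\mathbb{E}_{\rho_t}-u^\dagger|^p\le M_p(t)$, and the triangle inequality $|u-\mathbb{E}_{\rho_t}|^p\le 2^{p-1}(|u-u^\dagger|^p+|\mathbb{E}_{\rho_t}-u^\dagger|^p)$ then bounds $\int|u-\mathbb{E}_{\rho_t}|^p\rho_t\,du$ by $2^p C_p$.

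The main obstacle I anticipate is exactly the nonlinear self-dependence of the drift and diffusion on $\rho_t$ through the covariances, which would normally preclude a clean Gronwall estimate; Lemma~\ref{postvarianceestimate} is precisely what breaks this circularity. A secondary technical point is justifying the integration by parts rigorously, i.e.\ showing that $\phi\rho$ and the associated fluxes decay fast enough at infinity for boundary terms to vanish; I would handle this by introducing a smooth cutoff $\chi_R(u)$, carrying through the computation with $\phi\chi_R$ in place of $\phi$, and sending $R\to\infty$, using the $\mathcal{C}^2$ regularity of $\mu_0$ and the finiteness of all initial moments to control the remainder uniformly in $R$.
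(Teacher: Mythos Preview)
Your argument is correct and is, at the analytic level, the same as the paper's: both use Lemma~\ref{postvarianceestimate} to freeze the $\rho_t$-dependent coefficients into uniformly bounded/Lipschitz quantities, then derive a closed Gronwall inequality for the $p$-th moment, and finally recover the centered moments by the triangle inequality.

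The only genuine difference is formalism. The paper passes to the associated SDE $dz_t=F(t,z_t)\,dt+D(t,z_t)\,dW_t$ with $z_0\sim\mu_0$ and applies It\^o's formula to $|z_t|^{2k}$, obtaining the same differential inequality in probabilistic form; your version stays on the PDE side and tests~\eqref{eqn:muPDE} against $\phi(u)=|u-u^\dagger|^p$. These are dual computations, and the resulting inequalities are line-by-line equivalent. The practical advantage of the paper's SDE route is exactly the ``secondary technical point'' you flagged: It\^o's formula is automatically rigorous once one knows the SDE is well-posed, so no cutoff $\chi_R$ and no boundary-term justification is needed. Conversely, your PDE route is more self-contained and avoids invoking the stochastic representation at all.
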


\begin{proof}
We first rewrite \eqref{eqn:muPDE} into the following form:
\[
\partial_t \rho=\nabla_u\cdot (F^\top(t,u)\rho)+\frac{1}{2}\mathrm{Tr}\left(D(t,u) D^\top(t,u)\mathcal{H}_u(\rho)\right)\,,
\]
where the flux term is
\[
F(t,u)=\mathrm{Cov}_{\rho_t,\mathcal{G}}(t)\Gamma^{-1}\left(y-\mathcal{G}(u)\right)
\]
and the hessian term is
\[
D(t,u)=\mathrm{Cov}_{\rho_t,\mathcal{G}}(t)\Gamma^{-\frac{1}{2}}\,.
\]
According to this definition and Lemma~\ref{postvarianceestimate}, $F(t,u)$ and $D(t,u)$ are Lipschitz and bounded respectively:
\begin{equation}\label{boundF}
\left|F(t,u_1)-F(t,u_2)\right|\leq C|u_1-u_2|,\quad |F(t,\vec{0})|\leq C\,,
\end{equation}
and
\begin{equation}\label{boundD}
|D(t,u)|\leq C\,,
\end{equation}
where $C$ is a constant independent of $t,u_1,u_2$.
\\
Consider the corresponding SDE to \eqref{eqn:muPDE}:
\[
dz_t=F(t,z_t)dt+D(t,z_t)dW_t\,
\]
with $z_0\sim \mu_0$, then $\int_{\mathbb{R}^L}|u|^p\rho(t,u)du=\EE|z_t|^p$ and it suffices to prove the boundedness of $\EE|z_t|^p$:
\begin{equation}\label{suffice}
    \int_{\mathbb{R}^L}|u|^p\rho(t,u)du=\EE|z_t|^p\leq C_p\,.
\end{equation}
Using It\^o's formula:
\[
\begin{aligned}
&\frac{d\EE|z_t|^{2k}}{dt}\\
\leq&2k\EE|z_t|^{2(k-1)}\left\langle z_t,F(t,z_t)\right\rangle \\
&+k\EE|z_t|^{2(k-1)}\mathrm{Tr}(D^\top(t,z_t) D(t,z_t))\\
&+2k(k-1)\EE|z_t|^{2(k-2)}\left\langle z_t,D(t,z_t) D^\top(t,z_t)z_t\right\rangle\\
\leq& C_{1,k}\EE|z_t|^{2k}+C_{2,k}\,,
\end{aligned}
\]
where $C_{1,k},C_{2,k}$ are constants only depending on $k$, and we use \eqref{boundF}-\eqref{boundD} and Young's inequality in the second inequality. For example:
\[
\begin{aligned}
&\EE|z_t|^{2(k-1)}\left\langle z_t,F(t,z_t)\right\rangle\\
\leq &\EE|z_t|^{2k-1}|F(t,z_t)|\\
\leq &\EE|z_t|^{2k-1}\left(C|z_t|+|F(t,\vec{0})|\right)\\
\leq &C\EE|z_t|^{2k}+C\EE|z_t|^{2k-1}\\
\leq &\left(C+\frac{2k-1}{2k}\right)\EE|z_t|^{2k}+\frac{C^{2k}}{2k}\,,
\end{aligned}
\]
where the last inequality comes from the Young's inequality:
\[
C\EE|z_t|^{2k-1}\leq \frac{2k-1}{2k}\EE|z_t|^{2k}+\frac{1}{2k}C^{2k}\,.
\]
Since
\[
\EE|z_0|^{2k}= \int_{\mathbb{R}^L}|u|^{2k}\mu_0(u)du<\infty\,,
\]
by Gr\"onwall's inequality, we finally obtain
\[
\EE|z_t|^{2k}\leq C'_{2k},\quad \forall 0\leq t\leq 1\,,
\]
which implies \eqref{suffice}. 
\\
Finally, \eqref{varianceestimation} follows from \eqref{suffice} and the boundedness of $u^\dagger$ and $\EE_{\rho_t}$.
\qed
\end{proof}

\subsection{$\{v^j\}$ and the Fokker-Planck-like equation}\label{sec:vj_FP}
The $\{v^j\}$ system is the ``bridge" we build to connect $\{u^j_t\}$ with the PDE. It follows the flow of the PDE:
\begin{equation}\label{eqn:mean_field}
\begin{aligned}
{dv^{j}_t}=&\mathrm{Cov}_{\rho_t,\mathcal{G}}\Gamma^{-1}\left(y-\mathcal{G}(v^j_t)\right)dt+\mathrm{Cov}_{\rho_t,\mathcal{G}}\Gamma^{-\frac{1}{2}}dW^{j}_t
\end{aligned}
\end{equation}
with $\mathrm{Cov}_{\rho_t,\mathcal{G}}$ determined by solution to \eqref{eqn:muPDE}. We denote its ensemble distribution
\[
M_{v}=\frac{1}{J}\sum^J_{j=1}\delta_{v^j_N}\,.
\]

It is a classical result that $W_2(M_v,\rho)\rightarrow 0$ in $J\to\infty$ limit in the expectation sense. Indeed, if the initial condition for this SDE system is consistent with $\mu_0$, meaning $\{v^j_0\}$ are drawn i.i.d. from the measure induced by $\mu_0$, then the ensemble distribution of $\{v^j_t\}$ is close to measure induced by $\rho_t$ for all finite time.

\begin{proposition}[Linking $\{v^j\}$ with Fokker-Planck-like PDE]\label{thm:vj_FP}
Let $\{v^j_{t}\}$ solve \eqref{eqn:mean_field} with $\{v^j_{0}\}$ drawn i.i.d. from the measure induced by $\mu_0$, and let $\rho(t,u)$ solve \eqref{eqn:muPDE} with initial condition $\mu_0$, then if $\mu_0\in\mathcal{C}^2$ and has finite high moments, then under the weakly nonlinear assumptions \eqref{linear}, there is a constant $C(t)$ independent of $J$ such that, 
\begin{equation}\label{eqn:vj_FP2}
\mathbb{E}\left(W_2(M_{v_t},\rho_t)\right)\leq C(t)
\left\{
\begin{aligned}
&J^{-\frac{1}{2}},\quad L<4\\
&J^{-\frac{1}{2}}\mathrm{log}(1+J),\quad L=4\\
&J^{-2/L},\quad L>4
\end{aligned}
\right.\,.
\end{equation}
for all $t<\infty$. Here $M_{v_t}$ is the ensemble distribution of $\{v^j_{t}\}$.
\end{proposition}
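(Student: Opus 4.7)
My plan is to exploit the fact that \eqref{eqn:mean_field} is an i.i.d. family of decoupled SDEs whose common marginal is exactly $\rho(t,\cdot)$, and then invoke the Fournier--Guillin empirical-measure estimate (Theorem 3 of the paper, i.e.~\cite{Fournier2015}) to obtain the stated rates.

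First I would verify that \eqref{eqn:mean_field} is well-posed on $[0,1]$. Because $\mathrm{Cov}_{\rho_t,\mathcal{G}}$ depends only on time through the prescribed Fokker--Planck solution $\rho$ and is bounded by Lemma~\ref{postvarianceestimate}, and because $\mathcal{G}$ is globally Lipschitz under the weakly nonlinear assumption \eqref{linear}, the drift $\mathrm{Cov}_{\rho_t,\mathcal{G}}\Gamma^{-1}(y-\mathcal{G}(\cdot))$ is Lipschitz in $v^j$ uniformly in $t$ and the diffusion matrix $\mathrm{Cov}_{\rho_t,\mathcal{G}}\Gamma^{-1/2}$ is bounded and deterministic. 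Standard SDE theory then yields pathwise existence and uniqueness of a strong solution.

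Second, I would identify the law of each $v^j_t$ with $\rho(t,\cdot)$. The crucial structural observation is that the coefficients of \eqref{eqn:mean_field} are \emph{not} functions of the particle ensemble; they are purely functions of $t$ and of $v^j_t$ itself. Consequently the $J$ equations are genuinely decoupled, and since they are driven by independent Brownian motions $W^j$ starting from i.i.d. initial data $v^j_0\sim\mu_0$, the family $\{v^j_t\}_{j=1}^J$ is i.i.d. for every fixed $t$. By It\^o's formula, the common marginal density $\tilde\rho(t,\cdot)$ of $v^j_t$ satisfies
\[
\partial_t\tilde\rho=-\nabla_u\!\cdot\!\bigl((y-\mathcal{G}(u))^\top\Gamma^{-1}\mathrm{Cov}_{\mathcal{G},\rho_t}\tilde\rho\bigr)+\tfrac{1}{2}\mathrm{Tr}\bigl(\mathrm{Cov}_{\rho_t,\mathcal{G}}\Gamma^{-1}\mathrm{Cov}_{\mathcal{G},\rho_t}\mathcal{H}_u(\tilde\rho)\bigr),
\]
with initial condition $\mu_0$, which is exactly \eqref{eqn:muPDE}. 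Uniqueness of the strong solution therefore forces $\tilde\rho(t,\cdot)=\rho(t,\cdot)$.

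Third, with $M_{v_t}$ now realized as the empirical measure of $J$ i.i.d. draws from $\rho(t,\cdot)$, I would apply Theorem 3 of the paper, which gives precisely the three-regime rate $J^{-1/2}$ for $L<4$, $J^{-1/2}\log(1+J)$ for $L=4$, and $J^{-2/L}$ for $L>4$, under a sufficiently high polynomial moment hypothesis on $\rho(t,\cdot)$. That hypothesis is supplied, uniformly in $t\in[0,1]$ and with explicit constants $C_p$, by Lemma~\ref{postervariance}, whose assumption on $\mu_0$ having finite moments of every order is precisely the hypothesis of the proposition. Tracking the dependence of the Fournier--Guillin constant on these moments (and extending the moment estimate beyond $t=1$ by an identical Gr\"onwall step) delivers the announced $C(t)$ for all finite $t$. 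The main obstacle is largely absent at this stage: the decoupled nature of $\{v^j_t\}$ reduces the proposition to an off-the-shelf empirical-measure bound, and the only subtle point is confirming the uniform moment hypothesis of \cite{Fournier2015}, which Lemma~\ref{postervariance} is tailored to deliver. The genuine analytic difficulty of the paper --- controlling the gap between the interacting $\{u^j_t\}$ and the decoupled $\{v^j_t\}$ --- is deferred to the next section.
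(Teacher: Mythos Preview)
Your proposal is correct and follows essentially the same route as the paper: recognize that the $\{v^j_t\}$ are i.i.d.\ with common law $\rho_t$, verify the moment hypothesis needed for the Fournier--Guillin theorem via Lemma~\ref{postervariance}, and then apply Theorem~\ref{Fournier} with $q$ taken large enough that the second term is dominated. The paper routes the moment bound through Lemma~\ref{prop:highmomentforpac2} (whose first inequality is stated there to be an immediate consequence of Lemma~\ref{postervariance}), while you cite Lemma~\ref{postervariance} directly; you also spell out the identification $\mathrm{Law}(v^j_t)=\rho_t$ more carefully than the paper does, but the substance is the same.
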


This is a straightforward consequence of the famous result by~\cite{Fournier2015}, and for the completeness we cite the theorem here:
\begin{theorem}[Theorem 1 in~\cite{Fournier2015}]\label{Fournier}
Let $\rho(u)$ be a probility density on $\mathbb{R}^L$ and let $p>0$. Assume that
\[
M_q(\rho):=\int_{\mathbb{R}^d}|x|^q\rho(dx)<\infty
\]
for some $q>p$. Consider an $i.i.d$ sequence $(X_k)_{k\geq1}$ of $\rho$-distributed random variables and, for $N\geq1$, define the empirical measure
\[
\rho_N:=\frac{1}{N}\sum^N_{k=1}\delta_{X_k}.
\]
There is a constant $C$ depending only on $p,q,L$ such that, for all $N\geq1$,

\begin{itemize}
\item[1.] If $p>L/2$ and $q\neq 2p$
\[
\mathbb{E}\left(W_p(\rho_N,\rho)\right)\leq N^{-\frac{1}{2}}+N^{-(q-p)/q}\,.
\]
\item[2.] If $p=L/2$ and $q\neq 2p$
\[
\mathbb{E}\left(W_p(\rho_N,\rho)\right)\leq N^{-\frac{1}{2}}\log(1+N)+N^{-(q-p)/q}\,.
\]
\item[3.] If $p\in(0,L/2)$ and $q\neq L/(L-p)$
\[
\mathbb{E}\left(W_p(\rho_N,\rho)\right)\leq N^{-p/L}+N^{-(q-p)/q}\,.
\]
\end{itemize}
\end{theorem}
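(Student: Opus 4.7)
The plan is to follow the dyadic partition approach originally due to Dudley and sharpened by Fournier and Guillin. The overall strategy combines a truncation to bounded support (handled by the $q$-th moment hypothesis) with a multi-scale decomposition that trades off concentration of binomial counts against the geometric size of cells. In expectation, $W_p(\rho_N,\rho)$ splits into a tail error that scales like $N^{-(q-p)/q}$ and a bulk error whose behavior depends on whether $p$ is above, at, or below the critical threshold $L/2$.

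For the truncation step, I would split $\mathbb{R}^L = B_R \cup B_R^c$. The contribution of mass outside $B_R$ to any transport plan is controlled by weighting $|x|^p$ by the tail of $\rho$, which via H\"older and the hypothesis $M_q(\rho)<\infty$ yields $\lesssim R^{-(q-p)}$ in expectation (after combining with the symmetric bound for $\rho_N$ via the strong law for $M_p(\rho_N)$). A standard coupling that sends the tail mass to its nearest point of $B_R$ then shows
\[
\EE\bigl[W_p^p(\rho_N,\rho)\bigr] \;\lesssim\; \EE\bigl[W_p^p(\rho_N|_{B_R},\rho|_{B_R})\bigr] + R^{p-q}.
\]
Optimizing $R$ against the bulk contribution computed below produces the $N^{-(q-p)/q}$ term appearing in each of the three cases.

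For the bulk term, I would cover $B_R$ with a nested family of partitions: at level $n$, split into roughly $2^{nL}$ dyadic cubes of diameter $\lesssim 2^{-n}R$. A classical Wasserstein--total-variation lemma (transport can move mass within a parent cube for free up to a factor equal to its diameter) gives
\[
W_p^p(\rho_N|_{B_R},\rho|_{B_R}) \;\lesssim\; \sum_{n=0}^{n^{\ast}} (2^{-n}R)^p \!\!\sum_{Q\in\mathcal{Q}_n}\! |\rho_N(Q)-\rho(Q)|,
\]
where $n^{\ast}$ is the finest scale considered. Taking expectations and using the binomial variance bound $\EE|\rho_N(Q)-\rho(Q)| \le \sqrt{\rho(Q)/N}$ together with Cauchy--Schwarz across the cubes of level $n$ produces a per-level bound of order $(2^{-n}R)^p \sqrt{2^{nL}/N}$, a geometric-type series in $2^{n(L/2-p)}$. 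Summing identifies the three regimes: for $p>L/2$ the series converges, giving the optimal rate $N^{-1/2}$; for $p=L/2$ summation up to $n^\ast \sim \log_2 N$ adds a $\log(1+N)$ factor; for $p<L/2$ the sum is dominated by the finest level with $2^{n^\ast L}\sim N$, producing $N^{-p/L}$.

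The main obstacle, in my view, is the bookkeeping at cubes with very small mass $\rho(Q)\ll 1/N$, where the Gaussian-type concentration $\sqrt{\rho(Q)/N}$ is not tight and must be replaced by the deterministic estimate $|\rho_N(Q)-\rho(Q)| \le \rho(Q)+\rho_N(Q)$ lest one overcount negligible cells. A careful partition of each level into a ``dense'' class (where the Gaussian bound applies) and a ``sparse'' class (where the trivial bound suffices), together with a matched choice of $R$ and $n^\ast$ so that no intermediate step bleeds a polynomial factor, is what forces the exclusion $q\ne 2p$ (resp.\ $q\ne L/(L-p)$) in the statement. The remainder is routine interpolation and summation.
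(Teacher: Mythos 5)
This statement is not proved in the paper at all: it is quoted verbatim as Theorem~1 of Fournier and Guillin and used as a black box (the paper's only explicit use is to establish Proposition 2, by taking $q$ large enough that the $N^{-(q-p)/q}$ term is dominated). So there is no ``paper's own proof'' to compare against. Against the actual argument in the cited reference, your sketch is a faithful reconstruction of the same approach: truncation to a ball controlled by $M_q$, a nested dyadic partition with the multi-scale bound $W_p^p(\mu,\nu)\lesssim \sum_n 2^{-np}\sum_{Q\in\mathcal{Q}_n}|\mu(Q)-\nu(Q)|$, the binomial estimate $\EE|\rho_N(Q)-\rho(Q)|\le\min\{2\rho(Q),\sqrt{\rho(Q)/N}\}$, and a dense/sparse split to handle the low-mass cells. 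These are precisely the ingredients of Fournier--Guillin, and your identification of the three regimes via convergence/criticality/divergence of the geometric series in $2^{n(L/2-p)}$ is the correct mechanism. One small refinement worth flagging: the exclusions $q\neq 2p$ (resp.\ $q\neq L/(L-p)$) are not points where the estimate breaks down; they are the crossover values at which the truncation and bulk exponents coincide and the stated bound would otherwise acquire an extra $\log(1+N)$ factor, which the theorem's three clean cases choose to avoid. Your phrasing ``forces the exclusion'' suggests a failure mode rather than a logarithmic degeneracy, but the substance of your argument is right. Since the paper merely invokes this theorem and does not reprove it, your work goes beyond what the paper records; it is a correct outline of the external proof.
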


To show Proposition~\ref{thm:vj_FP} one essentially only needs to show the boundedness of all moments of the particle system. This is given by the following Lemma \ref{prop:highmomentforpac2}. We simply choose a large enough $q$ to have the first terms in Theorem \ref{Fournier} being the dominant term that eliminates the second terms.

As a result of Lemma~\ref{postervariance}, we can also bound the high moments of $\{v^j\}$. This is indeed what we plan to do. In the lemma below we will show the boundedness of the moments of $\{v^j_t\}$, derived as a consequence of Lemma~\ref{postervariance}. Before starting the lemma, we first define
\begin{equation*}\label{qvqdef}
q^j_t=v^j_t-\overline{v}\,,
\end{equation*}
then we have:
\begin{lemma}\label{prop:highmomentforpac2}
Under conditions in Proposition \ref{thm:vj_FP}, for any fixed even number $2\leq p<\infty$ and large enough $J$, there exits a constant $C_p$ independent of $J$ such that for all $0\leq t\leq 1$:
\begin{align}\label{highmomentv}
&\mathbb{E}|v^j_t|^p\leq C_p,\ \mathbb{E}\left|q^j_t\right|^p\leq C_p,\quad\forall1\leq j\leq J\,,
\end{align}
and 
\begin{equation}\label{highmomentv4}
\begin{aligned}
&\left(\mathbb{E}\left\|\overline{v}-\EE_{\rho_t}\right\|^p_2\right)^{1/p}\lesssim J^{-\frac{1}{2}},\\
&\left(\mathbb{E}\left|\frac{1}{J}\sum^J_{j=1}|q^j_t|^2-\mathrm{Tr}(\Cov_{\rho_t})\right|^p\right)^{1/p}\lesssim J^{-\frac{1}{2}}\,.
\end{aligned}
\end{equation}
\begin{equation}\label{highmomentv3}
\left(\mathbb{E}\left\|\Cov_{v_t}-\Cov_{\rho_t}\right\|^p_2\right)^{1/p}\lesssim J^{-\frac{1}{2}}\,,
\end{equation}
\end{lemma}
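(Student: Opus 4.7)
The central observation is that the SDE \eqref{eqn:mean_field} defining $\{v^j_t\}$ has coefficients depending only on the deterministic function $\rho_t$, not on the empirical measure of the $v^j$'s themselves. Consequently the $v^j_t$ are i.i.d.\ across $j$: they solve the same SDE driven by independent Brownian motions $W^j$ with independent initial data drawn from $\mu_0$. Moreover the Fokker--Planck equation associated with this SDE is exactly \eqref{eqn:muPDE} (the drift $\mathrm{Cov}_{\rho_t,\mathcal{G}}\Gamma^{-1}(y-\mathcal{G}(u))$ and diffusion $\mathrm{Cov}_{\rho_t,\mathcal{G}}\Gamma^{-1/2}$ reproduce the flux and diffusion of \eqref{eqn:muPDE} since $\mathrm{Cov}_{\rho_t,\mathcal{G}}\Gamma^{-1}\mathrm{Cov}_{\mathcal{G},\rho_t}$ is the required $DD^\top$), so the marginal law of each $v^j_t$ is precisely $\rho_t$. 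This reduces the lemma to classical concentration statements for sums of i.i.d.\ samples.

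\textbf{Step 1: bounds \eqref{highmomentv}.} Since $v^j_t\sim\rho_t$, one has $\mathbb{E}|v^j_t|^p=\int|u|^p\rho_t(u)\,du$, controlled uniformly on $[0,1]$ by Lemma~\ref{postervariance}. For $q^j_t=v^j_t-\overline{v}$ I would apply the triangle inequality together with the Jensen/convexity bound $|\overline{v}|^p\le \frac1J\sum_k|v^k_t|^p$ (valid since $p\ge 2$) to reduce the estimate for $q^j_t$ to the one already proved for $v^j_t$.

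\textbf{Step 2: fluctuations \eqref{highmomentv4}, \eqref{highmomentv3}.} These are standard i.i.d.\ $J^{-1/2}$ concentration estimates. For the mean, $\overline{v}-\EE_{\rho_t}=\tfrac1J\sum_j(v^j_t-\EE_{\rho_t})$ is an i.i.d.\ sum of mean-zero random vectors with finite $p$-th moment (by Step 1), so the Marcinkiewicz--Zygmund (or Rosenthal) inequality gives $\bigl(\mathbb{E}\|\overline{v}-\EE_{\rho_t}\|^p\bigr)^{1/p}\lesssim J^{-1/2}$. For the trace fluctuation I would expand
\[
\tfrac1J\sum_j|q^j_t|^2=\tfrac1J\sum_j|v^j_t|^2-|\overline{v}|^2, \quad \mathrm{Tr}(\mathrm{Cov}_{\rho_t})=\int|u|^2\rho_t\,du-|\EE_{\rho_t}|^2,
\]
so the difference splits into (i) an i.i.d.\ sample-mean fluctuation of the scalar $|v^j_t|^2$, rate $J^{-1/2}$, and (ii) $|\overline{v}|^2-|\EE_{\rho_t}|^2=(\overline{v}-\EE_{\rho_t})^\top(\overline{v}+\EE_{\rho_t})$, which is $O(J^{-1/2})$ in $L^p$ by Step~1 and the moment bound on $\overline{v}$ combined with Cauchy--Schwarz. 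The matrix estimate \eqref{highmomentv3} follows by the same decomposition applied entrywise:
\[
\mathrm{Cov}_{v_t}-\mathrm{Cov}_{\rho_t}=\Bigl[\tfrac1J\sum_j v^j_t(v^j_t)^\top-\int uu^\top\rho_t\,du\Bigr]-\bigl[\overline{v}\,\overline{v}^{\,\top}-\EE_{\rho_t}\EE_{\rho_t}^\top\bigr],
\]
each bracket being $O(J^{-1/2})$ in $L^p$, which transfers to the operator norm since $L$ is a fixed finite dimension.

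\textbf{Main obstacle.} The only subtle step is justifying that each $v^j_t$ is truly distributed according to $\rho_t$, which needs SDE wellposedness plus uniqueness for \eqref{eqn:muPDE}. Under \eqref{linear}, Lemma~\ref{postvarianceestimate} makes the drift globally Lipschitz in $u$ uniformly in $t\in[0,1]$ and the diffusion uniformly bounded, so standard strong-wellposedness and Fokker--Planck uniqueness results apply. Once i.i.d.\ is secured, the remaining work is a routine invocation of moment inequalities for sums of independent random variables together with Lemma~\ref{postervariance}; care must be taken only in tracking the dependence of the Rosenthal constants on the (fixed) exponent $p$ so that the constant $C_p$ in \eqref{highmomentv}--\eqref{highmomentv3} is independent of $J$.
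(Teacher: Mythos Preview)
Your proposal is correct and follows essentially the same route as the paper: both rely on the i.i.d.\ structure of $\{v^j_t\}$ with common law $\rho_t$, invoke Lemma~\ref{postervariance} for the moment bounds, and use the identical decomposition $\Cov_{v_t}-\Cov_{\rho_t}=[\tfrac1J\sum v^j\otimes v^j-\EE_{\rho_t}(v\otimes v)]-[\overline v\otimes\overline v-\EE_{\rho_t}\otimes\EE_{\rho_t}]$ together with an i.i.d.\ moment inequality. The only cosmetic differences are that the paper proves its own Rosenthal-type bound (Lemma~\ref{lemma:momentboundindependent} in Appendix~\ref{sec:momentbound}) rather than citing Marcinkiewicz--Zygmund/Rosenthal by name, and it deduces the trace fluctuation in \eqref{highmomentv4} from the matrix estimate \eqref{highmomentv3} via $|\mathrm{Tr}(\cdot)|\le L\|\cdot\|_2$ rather than arguing it separately as you do.
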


\begin{proof}
Since $\{v^k_t\}$ are $i.i.d$ sampled from measure induced by $\rho(t,u)$, \eqref{highmomentv} is a direct result from \eqref{varianceestimation}. Now, we prove the first inequality in \eqref{highmomentv4}. Use Jensen's inequality, we have
\begin{equation}\label{firstinequalityproof}
\left(\EE\left|\overline{v}-\EE_{\rho_t}\right|^p\right)^{1/p}\leq \sum^L_{n=1}\left(\EE\left|\overline{\alpha}_n\right|^{p}\right)^{1/p}\,,
\end{equation}
where we denote
\[
\overline{\alpha}_{n}=\left(\overline{v}_t-\EE_{\rho(t)}\right)_n = \frac{1}{J}\sum\left(v^j_t-\EE_{\rho(t)}\right)_n = \frac{1}{J}\sum\alpha^j_n\,.
\]
The subscript $n$ means the $n$-th entry of the vector. It is easy to show, due to the fact that $\{v^j\}$ are i.i.d. that
\begin{equation}
\EE(\alpha^j_n)=0,\quad \EE|\alpha^j_{n}|^p<\infty\,.
\end{equation}
We also show in Appendix \ref{sec:momentbound} Lemma \ref{lemma:momentboundindependent} that
\begin{equation}\label{alphajmn2}
\mathbb{E}\left|\sum^J_{j=1}\alpha^j_{n}\right|^p\lesssim J^{p/2}\,,
\end{equation}
which implies
\begin{equation}\label{elementbound}
\begin{aligned}
\EE\left|\overline{\alpha}_n\right|^{p}&\leq \EE\left|\frac{1}{J}\sum^J_{j=1}\alpha^j_n\right|^{p}\lesssim O(J^{-p/2})\,.
\end{aligned}
\end{equation}
Plugging~\eqref{elementbound} into \eqref{firstinequalityproof}, we prove the first inequality of \eqref{highmomentv4}. To show the second inequality in \eqref{highmomentv4} we note:
\[
\begin{aligned}
&\left|\frac{1}{J}\sum^J_{j=1}|q^j_t|^2-\mathrm{Tr}(\Cov_{\rho_t})\right|=\left|\mathrm{Tr}\left(\Cov_{v_t}-\Cov_{\rho_t}\right)\right|\\
\leq &L\left\|\Cov_{v_t}-\Cov_{\rho_t}\right\|_2
\end{aligned}
\]
Therefore it would be a direct result from~\eqref{highmomentv3}.
\\
To show~\eqref{highmomentv3}, we write $\Cov_{v_t}$ as
\[
\Cov_{v_t}=\frac{1}{J}\left(\sum^{J}_{j=1}v^j_t\otimes v^j_t\right)-\overline{v}\otimes \overline{v}
\]
meaning:
\begin{equation}\label{seperate}
\begin{aligned}
&\left(\mathbb{E}\left\|\Cov_{v_t}-\Cov_{\rho_t}\right\|^p_2\right)^{1/p}\\
\leq&\left(\mathbb{E}\left\|\frac{1}{J}\left(\sum^{J}_{j=1}v^j_t\otimes v^j_t\right)-\mathbb{E}_{\rho(t)}(v\otimes v)\right\|^p_2\right)^{1/p}\\
&\ +\left(\EE\left\|\overline{v}\otimes \overline{v}-\EE_{\rho_t}\otimes \EE_{\rho_t}\right\|^p_2\right)^{1/p}\,.
\end{aligned}
\end{equation}
We show below that both terms are of order $J^{-1/2}$. To show this for the first term, let 
\[
W=\sum^{J}_{j=1}\left(v^j_t\otimes v^j_t-\mathbb{E}_{\rho(t)}(v\otimes v)\right)=\sum_j w^j\,,
\]
then the first term becomes
\[
\begin{aligned}
&\left(\mathbb{E}\left\|\frac{1}{J}W\right\|^p_2\right)^{1/p}\leq \left(\mathbb{E}\left\|\frac{1}{J}W\right\|^p_F\right)^{1/p}\\
\lesssim&\, \sum^L_{m,n=1} \left(\mathbb{E}|W_{m,n}/J|^{p}\right)^{1/p}\\
=&\, \sum^L_{m,n=1} \frac{1}{J^{1/2}}\left(\mathbb{E}|W_{m,n}/\sqrt{J}|^{p}\right)^{1/p}\,,
\end{aligned}
\]
where $W_{m,n}$ means the $(m,n)^{th}$ entry of matrix. Similar to before, for each $m,n$, we have
\begin{equation}
\EE(w^j_{m,n})=0,\quad \EE|w^j_{m,n}|^p<\infty\,,
\end{equation}
and by Appendix \ref{sec:momentbound} Lemma \ref{lemma:momentboundindependent}, we have
\begin{equation}\label{wjmn2}
\mathbb{E}\left|\sum^J_{j=1}w^j_{m,n}\right|^p\lesssim J^{p/2}\,,
\end{equation}
which implies
\[
\mathbb{E}{|W/\sqrt{J}|_{m,n}^{p}}=\mathbb{E}\left|\frac{\sum^{J}_{j=1}w^j_{m,n}}{\sqrt{J}}\right|^{p}\sim O(1)
\]
which makes the first term $J^{-1/2}$.
For the second term in \eqref{seperate}, we have
\begin{equation}\label{seperatesecondterm}
\begin{aligned}
&\left(\EE\left\|\overline{v}\otimes \overline{v}-\EE_{\rho_t}\otimes \EE_{\rho_t}\right\|^p_2\right)^{1/p}\\
\leq &\left(\EE\left\|\left(\overline{v}-\EE_{\rho_t}\right)\otimes \overline{v}\right\|^p_2\right)^{1/p}+\left(\EE\left\|\EE_{\rho_t}\otimes \left(\overline{v}-\EE_{\rho_t}\right)\right\|^p_2\right)^{1/p}\,,
\end{aligned}
\end{equation}
The first term of \eqref{seperatesecondterm} can be bounded by
\[
\begin{aligned}
&\left(\EE\left\|\left(\overline{v}-\EE_{\rho_t}\right)\otimes \overline{v}\right\|^p_2\right)^{1/p}\\
\leq &\left(\EE\left\|\overline{v}-\EE_{\rho_t}\right\|^p_2\left\|\overline{v}\right\|^p_2\right)^{1/p}\\
\stackrel{(I)}{\leq}& \left(\EE\left\|\overline{v}-\EE_{\rho_t}\right\|^{2p}_2\right)^{1/2p}\left(\EE\left\|\overline{v}\right\|^{2p}_2\right)^{1/2p}\\
\stackrel{(II)}{\lesssim} &J^{-1/2}\,,
\end{aligned}
\]
where we use H\"older's inequality in $(I)$ and \eqref{highmomentv} and first inequality in \eqref{highmomentv4} in $(II)$. Similarly, second term of \eqref{seperatesecondterm} can also be bounded by
\[
\left(\EE\left\|\EE_{\rho_t}\otimes \left(\overline{v}-\EE_{\rho_t}\right)\right\|^p_2\right)^{1/p} \lesssim J^{-1/2}\,.
\]
Plug these two inequalities into \eqref{seperatesecondterm}, we have
\[
\left(\EE\left\|\overline{v}\otimes \overline{v}-\EE_{\rho_t}\otimes \EE_{\rho_t}\right\|^p_2\right)^{1/p}\lesssim J^{-1/2}\,.
\]
In conclusion, we finally obtain~\eqref{highmomentv3}.

\qed
\end{proof}

\section{Proof of Theorem~\ref{thm:mean_field} Part II, and Theorem~\ref{thm:moments}}\label{sec:mean_field_2}
We are now left with the task to show the closeness of $\{u^j_t\}$ and $\{v^j_t\}$. The two systems are governed by the SDE~\eqref{eqn:SDE}, and~\eqref{eqn:mean_field}.

The precise statement is the following:
\begin{proposition}\label{thm:vj_uj}[Linking $\{u^j\}$ with $\{v^j\}$]
Let $\{v^j_t\}^J_{j=1}$ solve~\eqref{eqn:mean_field}  and $\{u^j_t\}^J_{j=1}$ solve~\eqref{eqn:SDE}, with the same initial data i.i.d drawn from the measure induced by $\mu_0$. If $\mu_0\in\mathcal{C}^2$ and has finite high moments, then under weakly nonlinear assumptions \eqref{linear}, the two SDE systems are close in the following sense: for any $0<\epsilon<\frac{1}{4}$, there is a constant $0<C_\epsilon<\infty$ independent of $J$ and $t$ such that for any $0\leq t\leq 1$
\begin{equation}\label{Expectationdifference}
\frac{1}{J}\sum^J_{j=1}\mathbb{E}|u^j_{t}-v^j_{t}|^2\leq {C_\epsilon}{J^{-1+\epsilon}}\,.
\end{equation}
Furthermore, denote $M_{v_t}$ and $M_{u_t}$ the ensemble distributions of $\{v^j_t\}$ and $\{u^j_t\}$ respectively, then
\begin{equation}\label{eqn:W2_estimate}
\begin{aligned}
&\mathbb{E}\left(W_2(M_{v_t},M_{u_t}\right))\\
\leq&\left(\frac{1}{J}\sum^J_{j=1}\mathbb{E}|u^j_{t}-v^j_{t}|^2\right)^{\frac{1}{2}}\leq {C_\epsilon}{J^{-\frac{1}{2}+\epsilon}}\,.
\end{aligned}
\end{equation}
\end{proposition}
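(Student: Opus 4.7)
The plan is to couple the two SDE systems by driving them with the \emph{same} Brownian motions $\{W^j_t\}$ and the \emph{same} initial data $\{u^j_0\}=\{v^j_0\}$, and then to bound
\[
E(t):=\frac{1}{J}\sum_{j=1}^J\mathbb{E}|u^j_t-v^j_t|^2
\]
by It\^o's formula, a Gr\"onwall estimate, and a bootstrap. Setting $e^j_t:=u^j_t-v^j_t$, subtracting~\eqref{eqn:mean_field} from~\eqref{eqn:SDE_general} and applying It\^o to $|e^j_t|^2$, then summing over $j$ and taking expectation, produces an integrated inequality for $E(t)$ whose right-hand side contains cross terms coming from the drift difference together with quadratic variation terms coming from the diffusion difference.

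The key algebraic step is a three-way split of the drift difference,
\[
I_1^j=C^{up}(u_t)\Gamma^{-1}\bigl[\mathcal{G}(v^j_t)-\mathcal{G}(u^j_t)\bigr],
\]
\[
I_2^j=\bigl[C^{up}(u_t)-C^{up}(v_t)\bigr]\Gamma^{-1}\bigl(y-\mathcal{G}(v^j_t)\bigr),
\]
\[
I_3^j=\bigl[C^{up}(v_t)-\Cov_{\rho_t,\mathcal{G}}\bigr]\Gamma^{-1}\bigl(y-\mathcal{G}(v^j_t)\bigr),
\]
and an identical split for the diffusion coefficient. The weakly nonlinear assumption~\eqref{linear} renders $\mathcal{G}$ globally Lipschitz, so $I_1^j$ is controlled by $\|C^{up}(u_t)\|\,|e^j_t|$, contributing an $E(t)$-type term after Cauchy--Schwarz once uniform-in-$J$ moment bounds on $C^{up}(u_t)$ are available. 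Expanding $C^{up}(u_t)-C^{up}(v_t)$ particle-by-particle exhibits it as bilinear in $\{u^k_t-v^k_t\}$ with coefficients built from $\{u^k_t,v^k_t,\mathcal{G}(u^k_t),\mathcal{G}(v^k_t)\}$, so one more application of Cauchy--Schwarz across $k$ turns $I_2^j$ into an $E(t)$-type contribution multiplied by uniformly bounded moments. Finally $I_3^j$ does not involve $e^j$, and Lemma~\ref{prop:highmomentforpac2} gives $\mathbb{E}\|C^{up}(v_t)-\Cov_{\rho_t,\mathcal{G}}\|^2\lesssim J^{-1}$, so together with $L^p$-control of $y-\mathcal{G}(v^j_t)$ this piece contributes $O(J^{-1})$. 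The quadratic variation terms from the martingale part are handled by exactly the same three-way split and yield the same orders.

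Putting everything together delivers an inequality of the form $E(t)\leq C_1\int_0^t E(s)\,ds+C_2 J^{-1}$, whence Gr\"onwall produces the desired $J^{-1}$ rate up to a Cauchy--Schwarz-induced $\epsilon$ loss, and~\eqref{eqn:W2_estimate} follows immediately from the coupling bound $W_2^2(M_{u_t},M_{v_t})\leq \frac{1}{J}\sum_j|u^j_t-v^j_t|^2$ combined with Jensen's inequality applied to the square root.

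The principal obstacle is that the coefficients are \emph{not} globally Lipschitz in the classical mean-field sense: the empirical covariance $C^{up}(u_t)$ depends quadratically on all particles, so any estimate on $I_1^j$ and $I_2^j$ presupposes a priori moment control of $u^j_t$, which is itself nontrivial because $u^j_t$ is coupled through $C^{up}(u_t)$. The remedy, previewed by Lemma~\ref{prop:highmomentbound}, Corollary~\ref{cor:prebound} and Lemma~\ref{lem:bound_x}, is a bootstrap: one first leverages the moment bounds of the $v$-system to derive crude uniform-in-$J$ bounds on the moments of $u^j_t$; with these in hand a coarse estimate $E(t)\lesssim J^{-\alpha_0}$ for some $\alpha_0\geq 0$ is obtained; feeding this back into the $I_2^j$-bound, whose Cauchy--Schwarz step across particles carries a factor that scales like a power of the current rate, produces a strictly larger $\alpha_1$. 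Iterating this sharpening finitely many times, as encoded by Lemma~\ref{lem:bound_x}, pushes $\alpha$ arbitrarily close to $1$, yielding~\eqref{Expectationdifference} and hence~\eqref{eqn:W2_estimate} for any prescribed $\epsilon>0$.
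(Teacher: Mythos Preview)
Your outline captures the right architecture---synchronous coupling, It\^o on $|e^j_t|^2$, a decomposition of the drift and diffusion increments, and a bootstrap on the rate $\alpha$---and this is indeed what the paper does. But there is a genuine gap in the middle of your argument, and it is precisely the point where the paper has to work hardest.

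You assert that the three-way split yields ``an inequality of the form $E(t)\leq C_1\int_0^t E(s)\,ds+C_2 J^{-1}$.'' This is not what your split produces. The coefficient $C^{up}(u_t)$ in $I_1^j$ is random and correlated with $|e^j_t|^2$; separating them by H\"older gives $(\mathbb{E}|e^j|^{2p})^{1/p}$ for some $p>1$, and interpolating against the uniform high-moment bound of Corollary~\ref{cor:prebound} yields $E^{1-\delta}$, not $E$. The $I_2^j$ term behaves the same way. The resulting inequality is essentially $E'\lesssim E^{1-\delta}+J^{-1}$, and this does \emph{not} bootstrap: feeding in $E\lesssim J^{-\alpha}$ returns $E\lesssim J^{-\alpha(1-\delta)}+J^{-1}$, which is no better than $J^{-\alpha}$ when $\alpha$ is small. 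Your final paragraph describes the bootstrap correctly in words but does not explain why the iteration improves; with the inequality you have derived, it does not.

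What the paper does differently is twofold. First, it decomposes the random coefficient one level further, writing $C^{up}(u_t)=\Cov_{\rho_t,\mathcal{G}}+\bigl(C^{up}(v_t)-\Cov_{\rho_t,\mathcal{G}}\bigr)+\bigl(C^{up}(u_t)-C^{up}(v_t)\bigr)$. The first piece is deterministic and produces a genuine linear term $CE$; the second carries a factor $J^{-1/2}$ by Lemma~\ref{prop:highmomentforpac2}; the third carries an extra power of $\frac{1}{J}\sum_k|e^k|$, which under the inductive hypothesis contributes additional $J$-decay. Second, the paper exploits the weakly nonlinear structure by first tightening the rate for the \emph{observables} $\tx^j=\Gamma^{-1/2}Ax^j$ (Lemma~\ref{lem:bound_tx}) and then feeding that improved bound into the estimate for $x^j$ itself (Lemma~\ref{lem:bound_x}). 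These two refinements together are what put the $J^{-1/4}$-type prefactors on the sub-linear terms in the differential inequality and make the recursion $\alpha_{n+1}=\tfrac{1}{2}+\tfrac{\alpha_n}{2}-\epsilon$ strictly improving. Your proposal omits both steps, and without them the Gr\"onwall loop does not close.
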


This proposition states that the two particle systems are close for big $J$. Combined with Proposition~\ref{thm:vj_FP}, it is straightforward to show Theorem~\ref{thm:mean_field}.
\begin{proof}[Proof of Theorem~\ref{thm:mean_field}]
Considering~\eqref{eqn:vj_FP2} and~\eqref{eqn:W2_estimate}, by triangle inequality, for any $0\leq t\leq 1$, one has:
\[
\begin{aligned}
&\mathbb{E}\left(W_2(M_{u_t},\rho(t,u))\right)\\
\leq &\mathbb{E}\left(W_2(M_{u_t},M_{v_t})\right)+\mathbb{E}\left(W_2(M_{v_t},\rho(t,u))\right)\\
\leq &C_\epsilon
\left\{
\begin{aligned}
&J^{-\frac{1}{2}+\epsilon},\quad L\leq4\\
&J^{-2/L},\quad L>4\\
\end{aligned}
\right.\,,
\end{aligned}
\]
which finishes the proof.\qed
\end{proof}

The proof for Theorem~\ref{thm:moments} is also straightforward.
\begin{proof}[Proof of Theorem~\ref{thm:moments}]
Using triangle inequality to the left hand side of \eqref{weakconvergence}, we have
\begin{equation}\label{uf1}
\begin{aligned}
&\left(\EE\left|\int f(u)\left[M_{u_t}-\rho(t,u)\right] du\right|^2\right)^{\frac{1}{2}}\\
\leq &\left(\EE\left|\int f(u)\left[M_{u_t}-M_{v_t}\right] du\right|^2\right)^{\frac{1}{2}}\\
&+\left(\EE\left|\int f(u)\left[M_{v_t}-\rho(t,u)\right] du\right|^2\right)^{\frac{1}{2}}\,.
\end{aligned}
\end{equation}
We bound both terms:
\begin{itemize}
\item Expand the first term: we have
\begin{equation}\label{uf2}
\begin{aligned}
&\EE\left|\int f(u)\left[M_{u_t}-M_{v_t}\right] du\right|^2\\
=&\EE\left|\frac{1}{J}\sum^J_{j=1}f(u^j_t-v^j_t)\right|^2\\
\leq&\frac{l^2}{J^2}\EE\left(\sum^J_{j=1}|u^j_t-v^j_t|^2\right)\\
\leq&C_\epsilon L^2J^{-1+\epsilon}\,,
\end{aligned}
\end{equation}
where in the first inequality we use $f$ is $l$-Lipshitz and H\"older's inequality and in the second inequality we use Proposition \ref{thm:vj_uj} \eqref{Expectationdifference}.
\item Consider the second term, we have
\[
\begin{aligned}
&\EE\left|\int f(u)\left[M_{v_t}-\rho(t,u)\right] du\right|^2\\
=&\EE\left|\frac{1}{J}\sum^J_{j=1}f(v^j_t)-\EE_{\rho_t}(f)\right|^2\\
=&\frac{1}{J^2}\sum^J_{j=1}\EE\left|f(v^j_t)-\EE_{\rho_t}(f)\right|^2\\
\leq&\mathrm{Cov}_{\rho_t,f}J^{-1}\,,
\end{aligned}
\]
where in the second equality we use $v^j_t\sim \rho(t,u)$ are independent and $\mathrm{Cov}_{\rho_t,f}$ is same as covariance of $f$.
\\
Since $f$ is $l$-Lipschitz and $\rho$ has finite second moment, there is a constant $C(l,f(\vec{0}))$ such that 
\[
\mathrm{Cov}_{\rho_t,f}\leq C(l,f(\vec{0}))\,.
\]
Therefore, we have
\begin{equation}\label{vf1}
\EE\left|\int f(u)\left[M_{v_t}-\rho(t,u)\right] du\right|^2\leq C(l,f(\vec{0}))J^{-1}\,.
\end{equation}
\end{itemize}
Combine the two terms into \eqref{uf1}, we proves \eqref{weakconvergence} with the constant depending on $\epsilon$, $l$ and $f(\vec{0})$.\qed
\end{proof}

In the following subsections, we first provide some a-priori estimate, and prove Proposition~\ref{thm:vj_uj} using the bootstrapping method.

\subsection{Some a-priori estimates}
We mainly show the higher moments of $\{u^j\}$ are bounded.

First, we present a lemma similar to proof of Theorem 4.5 in \cite{DCPS}. For convenience, denote
\begin{align*}
&e^j(t)=u^j(t)-\overline{u}(t)\,,\quad \textbf{e}^j(t)=\Gamma^{-\frac{1}{2}}Ae^j(t)\,,\\
&\tu^j(t)=\Gamma^{-\frac{1}{2}}Au^j(t)\,,\\
&\,\tr^j(t)=\Gamma^{-\frac{1}{2}}\left[\Rm(u^j(t))-\frac{1}{J}\sum^J_{j=1}\Rm(u^j(t))\right]\,.
\end{align*}
then:
\begin{lemma}\label{anotherestimate}
Denote
\begin{equation}
\mathrm{V}_p(e(t)):=\mathbb{E}\left(\sum^K_{m=1}\left(\frac{1}{J}\sum^J_{j=1}\left|e^j_m(t)\right|^2\right)^{p/2}\right)
\end{equation}
for some $p\geq2$. Then under conditions of Proposition~\ref{thm:vj_uj}, for every $p$, there is a constant $J_p$ such that for any $J>J_p$ and $0\leq t\leq 1$
\begin{equation}\label{vp}
\mathrm{V}_p(e(t))\leq C_p\,,
\end{equation}
where $C_p$ is a constant independent of $J$ and $t$. Moreover, $J_2=0$. Here, $e^j_m$ is the $m$-th component of $e^j$.
\end{lemma}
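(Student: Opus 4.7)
The plan is to derive an It\^o-based drift estimate for $\mathrm{V}_p(e(t))$ and close it by a bootstrap induction on $p$ combined with Gr\"onwall. The starting point is to compute $de^j$. Subtracting the mean from the SDE \eqref{eqn:SDE},
\[
de^j = -\mathrm{Cov}_{u_t,u_t}A^\top\Gamma^{-1}Ae^j\,dt - \mathrm{Cov}_{u_t,\Rm}\Gamma^{-1}\bigl(\Rm(u^j_t)-\overline{\Rm}\bigr)\,dt + \sigma(u_t)\,d\bigl(W^j_t - \overline{W}_t\bigr),
\]
where $\sigma(u_t) := \mathrm{Cov}_{u_t,u_t}A^\top\Gamma^{-1/2} + \mathrm{Cov}_{u_t,\Rm}\Gamma^{-1/2}$. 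The crucial structural point is that the linear drift is dissipative: summing against $e^j$ gives $\sum_j\langle e^j,\mathrm{Cov}_{u_t,u_t}A^\top\Gamma^{-1}Ae^j\rangle = J\,\mathrm{Tr}\bigl(\mathrm{Cov}_{u_t,u_t}A^\top\Gamma^{-1}A\mathrm{Cov}_{u_t,u_t}\bigr) \geq 0$, while the $\Rm$-drift is globally bounded through the weak-nonlinearity assumption \eqref{linear}.

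For the base case $p=2$, I would apply It\^o to $\sum_j|e^j|^2$, sum in $j$, and take expectation. The dissipative term is discarded, and the nonlinear drift together with the quadratic variation produce, via Cauchy--Schwarz and $|\Rm|\leq M$, a bound of the form $C_1\,\mathbb{E}\sum_j|e^j|^2 + C_2 J$. A direct Gr\"onwall then gives $\mathrm{V}_2(e(t)) \leq C_2$ on $[0,1]$ uniformly in $J$, yielding $J_2 = 0$.

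For general even $p \geq 4$, I would assume inductively that $\mathrm{V}_{p-2}(e(t)) \leq C_{p-2}$ for $J \geq J_{p-2}$ and apply It\^o to $F := \sum_m g_m^{p/2}$ with $g_m := \frac{1}{J}\sum_j|e^j_m|^2$. The drift of $F$ splits into three contributions: (i) a contractive linear piece with the correct sign, which can be discarded; (ii) a nonlinear drift term dominated by $C g_m^{p/2 - 1/2}$ using $|\Rm(u^j)-\overline{\Rm}|\leq 2M$ and Cauchy--Schwarz; and (iii) an It\^o correction of the form $g_m^{p/2-1}[\sigma\sigma^\top]_{mm}$ together with a quartic piece $g_m^{p/2-2}\bigl(\tfrac{1}{J}\sum_j e^j_m\sigma_{m,\cdot}\bigr)^2$. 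Each of these will be dominated by $C\,\mathrm{V}_p + C'_p$ after applying Young's inequality and substituting the inductive bound on $\mathrm{V}_{p-2}$. The resulting differential inequality $\tfrac{d}{dt}\mathrm{V}_p \leq C\,\mathrm{V}_p + C'_p$ is then closed by Gr\"onwall on $[0,1]$.

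The main obstacle is the state-dependent diffusion coefficient $\sigma(u_t)$: its quadratic variation contains $\|\mathrm{Cov}_{u_t,u_t}\|_2^2$, which is a priori of higher order than the quantity $\mathrm{V}_p$ being controlled, so a naive estimate would require $\mathrm{V}_{p+2}$ to bound $\mathrm{V}_p$, creating an unclosed hierarchy. The resolution making the bootstrap viable is the elementary inequality $\|\mathrm{Cov}_{u_t,u_t}\|_2 \leq \mathrm{Tr}(\mathrm{Cov}_{u_t,u_t}) = \sum_m g_m$, which feeds the extra powers of the covariance back into the same $g_m$'s already present on the left-hand side of the It\^o identity. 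This closure works only once the empirical covariance is well behaved, which is precisely where the threshold $J \geq J_p$ enters, consistently with the PDE-level bounds of Lemma \ref{postvarianceestimate}.
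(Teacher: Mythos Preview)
Your plan has a genuine gap at exactly the place you flag as the main obstacle. The resolution you propose --- bounding the linear It\^o correction via $\|\mathrm{Cov}_{u_t,u_t}\|_2\le \sum_m g_m$ --- does not close. After that substitution the It\^o term is of order $g_m^{p/2-1}\bigl(\sum_n g_n\bigr)^2$; summing over $m$ and applying Young or H\"older produces quantities like $\mathbb{E}\bigl(\sum_n g_n\bigr)^p$ or $\mathbb{E}\,W_p\cdot\bigl(\sum_n g_n\bigr)$, which are $\mathrm{V}_{2p}$-type, not controlled by $\mathrm{V}_p$ and $\mathrm{V}_{p-2}$. Your inductive hypothesis on $\mathrm{V}_{p-2}$ does not rescue this, so the differential inequality is superlinear and Gr\"onwall does not apply.

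The mechanism the paper uses is different and sharper: it never bounds the linear It\^o correction separately. Writing $\mathcal{E}_m=\sum_n\bigl(\sum_k e^k_m\mathbf{e}^k_n\bigr)^2$, the linear drift contributes $-\tfrac{p}{J^2}\,g_m^{p/2-1}\mathcal{E}_m$, while the It\^o correction from the \emph{same} covariance diffusion contributes $+C\,g_m^{p/2-1}\mathcal{E}_m$ with $C=\tfrac{p}{J^2}\bigl(\tfrac{(p-2+J)(J-1)}{2J^2}+\tfrac{p-2}{2J^2}\bigr)\to \tfrac{p}{2J^2}$ as $J\to\infty$. The two terms have \emph{identical structure}, and for $J$ above an explicit threshold $J_p$ the net coefficient is negative, so the entire $\mathcal{E}$-block is discarded. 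Only the $\Rm$-block $\mathcal{R}_m=\sum_n\bigl(\sum_k e^k_m\mathbf{r}^k_n\bigr)^2$ survives, and since $|\mathbf{r}^k|\le C M$ one gets $\mathcal{R}_m\le C J\sum_k|e^k_m|^2$, which feeds back linearly into $\mathrm{V}_p$ and closes by Gr\"onwall directly --- no induction on $p$ is needed. This also explains $J_2=0$: for $p=2$ the second-derivative piece of It\^o is absent and the coefficient comparison holds for all $J$. Your explanation of $J_p$ as ``where the empirical covariance is well behaved'' (and the reference to the PDE Lemma~\ref{postvarianceestimate}) is not the reason; $J_p$ is the threshold at which the drift coefficient strictly dominates the It\^o coefficient.
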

\begin{proof} Without loss of generality, assume $u^\dagger=\vec{0}$. When $t=0$, since $\mu_0$ has finite high moments, we can find a bound for $\mathrm{V}_p(e(0))$ independent of $J$. Let 
\[
\mathrm{W}_p(e(t))=\sum^K_{m=1}\left(\frac{1}{J}\sum^J_{j=1}\left|e^j_m(t)\right|^2\right)^{p/2}\,,
\]
then we have
\[
\begin{aligned}
de^j_m=&-\frac{1}{J}\sum^J_{k=1}e^k_m\left\langle \textbf{e}^k,\textbf{e}^j\right\rangle dt-\frac{1}{J}\sum^J_{k=1}e^k_m\left\langle \tr^k,\tr^j\right\rangle dt\\
&+\frac{1}{J}\sum^J_{k=1}e^k_m\left\langle \textbf{e}^k,d\left(W^j-\overline{W}\right)\right\rangle\\
&+\frac{1}{J}\sum^J_{k=1}e^k_m\left\langle \tr^k,d\left(W^j-\overline{W}\right)\right\rangle
\end{aligned}\,
\]
and

\begin{equation}\label{eqn:WPE}
\begin{aligned}
d\mathrm{W}_p(e)=&\sum^K_{m=1}\sum^{J}_{j=1}\frac{\partial \mathrm{W}_p}{\partial e^j_m}de^j_m\\
&+\frac{1}{2}\sum^K_{m=1}\sum^{J}_{j,j'=1}de^j_m\frac{\partial^2 \mathrm{W}_p}{\partial e^j_m\partial e^{j'}_{m}}de^{j'}_{m}\,.
\end{aligned}
\end{equation}
Let
\begin{equation}\label{eqn:E}
\mathcal{E}=\sum^K_{m=1}\left(\frac{1}{J}\sum^J_{j=1}|e^j_m|^2\right)^{\frac{p}{2}-1}\sum^K_{n=1}\left(\sum^J_{k=1}e^k_m\textbf{e}^k_n\right)^2\,,
\end{equation}
\begin{equation}\label{eqn:R}
\mathcal{R}=\sum^K_{m=1}\left(\frac{1}{J}\sum^J_{j=1}|e^j_m|^2\right)^{\frac{p}{2}-1}\sum^K_{n=1}\left(\sum^J_{k=1}e^k_m\textbf{r}^k_n\right)^2\,,
\end{equation}
\[
\mathcal{F}=\sum^K_{m=1}\left(\frac{1}{J}\sum^J_{j=1}|e^j_m|^2\right)^{\frac{p}{2}-1}\sum^K_{n=1}\left(\sum^J_{k=1}e^k_m(\textbf{e}^k_n+\textbf{r}^k_n)\right)^2\,.
\]
Using Young's inequality: $(a+b)^2\leq (1+\epsilon) a^2+(1+1/\epsilon)b^2$ for any $\epsilon>0$, we have
\begin{equation}\label{OER}
\mathcal{F}\leq (1+\epsilon)\mathcal{E}+(1+1/\epsilon)\mathcal{R}\,.
\end{equation}
Similar to \cite{DCPS} (B.1), taking expectation on the first part of \eqref{eqn:WPE} gives us:
\begin{equation}\label{eqn:WPE1}
\EE\left(\sum^K_{m=1}\sum^{J}_{j=1}\frac{\partial \mathrm{W}_p}{\partial e^j_m}de^j_m\right)=-\frac{p
}{J^2}\EE(\mathcal{E}+\mathcal{R})
\end{equation}
and the second part of \eqref{eqn:WPE} give us:
\begin{equation}\label{eqn:WPE2}
\begin{aligned}
&\EE\left(\frac{1}{2}\sum^K_{m=1}\sum^{J}_{j,j'=1}de^j_m\frac{\partial^2 \mathrm{W}_p}{\partial e^j_m\partial e^{j'}_{m}}de^{j'}_{m}\right)\leq C\mathbb{E}(\mathcal{F})\\
\leq&C(1+\epsilon)\EE(\mathcal{E})+C(1+1/\epsilon)\EE(\mathcal{R})
\end{aligned}
\end{equation}
where $C=\frac{p}{J^2}\left(\frac{(p-2+J)(J-1)}{2J^2}+\frac{(p-2)}{2J^2}\right)$ and in the last inequality we use \eqref{OER} with $\epsilon>0$.
\\
Plug \eqref{eqn:E} and \eqref{eqn:R} into \eqref{eqn:WPE1} and \eqref{eqn:WPE2} with $\epsilon=\frac{1}{2}$, then the expectation of $\mathrm{W}_p$ is given by
\begin{equation}\label{VpET}
\begin{aligned}
&\frac{d\mathrm{V}_p(e)}{dt}=\frac{d\EE\mathrm{W}_p(e)}{dt}\leq -C_1\EE(\mathcal{E})+C_2\EE(\mathcal{R})\\
= &-C_1\mathbb{E}\left[\sum^K_{m=1}\left(\sum^J_{j=1}|e^j_m|^2\right)^{\frac{p}{2}-1}\sum^K_{n=1}\left(\sum^J_{k=1}e^k_m\textbf{e}^k_n\right)^2\right]\,\\
&+C_2\mathbb{E}\left[\sum^K_{m=1}\left(\sum^J_{j=1}|e^j_m|^2\right)^{\frac{p}{2}-1}\sum^K_{n=1}\left(\sum^J_{k=1}e^k_m\textbf{r}^k_n\right)^2\right]\\
\leq &C_3\mathbb{E}\left[\sum^K_{m=1}\left(\sum^J_{j=1}|e^j_m|^2\right)^{\frac{p}{2}}\right]\\
=&C_4\mathrm{V}_p(e)
\end{aligned}\,,
\end{equation}
where  
\[
C_1=\frac{p}{J^{1+p/2}}\left(1-\frac{3(p-2+J)(J-1)}{4J^2}-\frac{3(p-2)}{4J^2}\right)\,,
\]
\[
C_2=-\frac{p}{J^{1+p/2}}\left(1-\frac{3(p-2+J)(J-1)}{2J^2}-\frac{3(p-2)}{2J^2}\right)\,,
\]
\[
C_3=4\|\Gamma^{-\frac{1}{2}}\|^2_2M^2J\times C_2\,,
\]
\[
C_4=C_3\times J^{p/2}\sim O(1)\,.
\]
From the second to the third inequality, we delete the first term since it is always negative. We also used the following:
\[
\begin{aligned}
\sum^K_{n=1}\left(\sum^J_{k=1}e^k_m\textbf{r}^k_n\right)^2&\leq \left(\sum^J_{k=1}|e^k_m|^2\right)\left(\sum^K_{n=1}\sum^J_{k=1}|\textbf{r}^k_n|^2\right)\\
&\leq 4\|\Gamma^{-\frac{1}{2}}\|^2_2M^2J\left(\sum^J_{k=1}|e^k_m|^2\right)
\end{aligned}
\]
to obtain the formula for $C_3$. Note that there is a number $J_p$ such that when $J>J_p$, the constants are all positive. Note that according to the formula of $C_{1}$ and $C_{2}$, $J_2=0$. Since $V_p(e(0))$ is bound, by the Gr\"onwall inequality, \eqref{VpET} implies \eqref{vp}.
\qed
\end{proof}

\begin{lemma}\label{prop:highmomentbound}
Under conditions of Proposition \ref{thm:vj_uj}, for any $2\leq p<\infty$ and large enough $J$ (larger than $J_p$ as defined in Lemma \ref{anotherestimate}), $p$-th moment of particles $\{u^j_t\}^J_{j=1}$ are uniformly bounded for finite time, namely there is a constant $C_p>0$ independent of $J$ and $t$ such that for all $0\leq t\leq 1$ and $1\leq j\leq J$
\begin{equation}\label{highmomentu}
\mathbb{E}|u^j_t|^p\leq C_p\,,\ \left(\mathbb{E}\left\|\Cov_{u_t}-\Cov_{\rho_t}\right\|^p_2\right)^{1/p}\leq C_p\,.
\end{equation}
Furthermore, 
\[
\mathbb{E}\left|u^j_t-\bar{u}_t\right|^p\leq C_p\,,\ \mathbb{E}\left|u^j_t-u^\dagger\right|^p\leq C_p\,.
\]
\end{lemma}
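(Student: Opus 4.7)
The plan is to feed Lemma \ref{anotherestimate} into an Ito argument on $|u^j_t - u^\dagger|^p$ to obtain $\mathbb{E}|u^j_t - u^\dagger|^p \leq C_p$, from which the remaining three assertions follow by triangle and Jensen inequalities. As a preliminary step, I would translate the bound on $V_p(e(t))$ into uniform-in-$(t,J)$ bounds on the moments of the empirical covariances. For any integer $q\geq 1$, the power-mean inequality $(\sum_{m=1}^K x_m)^q\leq K^{q-1}\sum_m x_m^q$ applied to $x_m = \tfrac{1}{J}\sum_j|e^j_m(t)|^2$ gives
\[
\bigl(\mathrm{tr}\,\Cov_{u_t,u_t}\bigr)^q \leq K^{q-1}\sum_m \Bigl(\tfrac{1}{J}\sum_j |e^j_m(t)|^2\Bigr)^q,
\]
so $\mathbb{E}\|\Cov_{u_t,u_t}\|_2^q \leq \mathbb{E}(\mathrm{tr}\,\Cov_{u_t,u_t})^q \leq K^{q-1}V_{2q}(e(t)) \leq C_q$ by Lemma \ref{anotherestimate}. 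An entrywise Cauchy--Schwarz combined with $|\Rm|\leq M$ then yields $\|\Cov_{u_t,\Rm}\|_F^2 \leq 4M^2\,\mathrm{tr}\,\Cov_{u_t,u_t}$, hence $\mathbb{E}\|\Cov_{u_t,\Rm}\|_2^q \leq C_q$ for every $q$.

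Next, I would apply Ito's formula to $f(u)=|u-u^\dagger|^p$ using the SDE \eqref{eqn:SDE}. The linear part of the drift produces the dissipative term $-p|u^j-u^\dagger|^{p-2}(u^j-u^\dagger)^\top \Cov_{u_t,u_t} A^\top\Gamma^{-1}A(u^j-u^\dagger)\leq 0$, which I simply discard. The $\Rm$-part of the drift contributes a term bounded by $pC|u^j-u^\dagger|^{p-1}\|\Cov_{u_t,\Rm}\|_2$ since $|r-\Rm(u)|\leq C$, and the Ito correction coming from $DD^\top$, with $D=\Cov_{u_t,u_t}A^\top\Gamma^{-1/2}+\Cov_{u_t,\Rm}\Gamma^{-1/2}$, is controlled by $Cp^2|u^j-u^\dagger|^{p-2}\bigl(\|\Cov_{u_t,u_t}\|_2^2 + \|\Cov_{u_t,\Rm}\|_2^2\bigr)$. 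Applying Young's inequalities of the form $ab\leq \tfrac{p-1}{p}a^{p/(p-1)}+\tfrac{1}{p}b^p$ and $a^{p-2}b^2\leq \tfrac{p-2}{p}a^p+\tfrac{2}{p}b^p$ to separate $|u^j-u^\dagger|$ from the covariance norms and then taking expectation, the bounds of the previous step yield
\[
\tfrac{d}{dt}\mathbb{E}|u^j_t-u^\dagger|^p \leq C_1\,\mathbb{E}|u^j_t-u^\dagger|^p + C_2,
\]
with $C_1,C_2$ independent of $t$ and $J$. Since $\mu_0$ has all finite moments, $\mathbb{E}|u^j_0-u^\dagger|^p$ is finite, and Gr\"onwall's inequality on $[0,1]$ yields $\mathbb{E}|u^j_t-u^\dagger|^p \leq C_p$.

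The remaining three inequalities then drop out quickly. A triangle inequality with the constant $u^\dagger$ gives $\mathbb{E}|u^j_t|^p\leq C_p$; Jensen applied to $\bar u_t$ shows $\mathbb{E}|\bar u_t|^p\leq\mathbb{E}|u^1_t|^p$, so $\mathbb{E}|u^j_t-\bar u_t|^p\leq 2^{p-1}(\mathbb{E}|u^j_t|^p+\mathbb{E}|\bar u_t|^p)\leq C_p$; and the $L^p$ bound on $\|\Cov_{u_t,u_t}\|_2$ from the first step combined with the deterministic bound on $\|\Cov_{\rho_t}\|_2$ from Lemma \ref{postvarianceestimate} closes the estimate for $\mathbb{E}\|\Cov_{u_t,u_t}-\Cov_{\rho_t}\|_2^p$ via the triangle inequality. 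The main technical obstacle is the Ito step: the diffusion matrix is random and depends on all $J$ particles through the empirical covariances, so one has to choose the Young exponents carefully so that every power of $|u^j_t-u^\dagger|$ other than $p$ itself can be absorbed into a moment of $\|\Cov_{u_t,\cdot}\|_2$ for which the first step of the plan already furnishes a $J$-independent bound.
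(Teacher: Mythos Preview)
Your plan has a genuine gap at the Ito step. You claim that the linear part of the drift contributes
\[
-p\,|u^j-u^\dagger|^{p-2}\,(u^j-u^\dagger)^\top \Cov_{u_t,u_t}\, A^\top\Gamma^{-1}A\,(u^j-u^\dagger)\ \leq\ 0,
\]
but the matrix $\Cov_{u_t,u_t}\,A^\top\Gamma^{-1}A$ is a product of two positive semidefinite matrices and is \emph{not} symmetric; its symmetric part need not be PSD (take for instance $P=\mathrm{diag}(1,0)$ and $Q=\bigl(\begin{smallmatrix}1&1\\1&1\end{smallmatrix}\bigr)$, where $\tfrac12(PQ+QP)$ has negative determinant). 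So the quadratic form can be strictly negative and the term cannot be discarded. Nor can it be absorbed: it is of size $|u^j-u^\dagger|^{p}\,\|\Cov_{u_t,u_t}\|_2$, and any H\"older or Young splitting necessarily produces a moment of $u^j-u^\dagger$ of order strictly greater than $p$, so the Gr\"onwall loop does not close on $\mathbb{E}|u^j-u^\dagger|^p$.

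The paper's proof is designed precisely to get around this obstruction, via a two–pass argument. It first applies It\^o to $|\tu^j|^{2p}$ with $\tu^j=\Gamma^{-1/2}Au^j$. After left–multiplying the SDE by $\Gamma^{-1/2}A$, the linear drift becomes $-\Cov_{\tu,\tu}\,\tu^j$ with $\Cov_{\tu,\tu}=(\Gamma^{-1/2}A)\Cov_{u,u}(\Gamma^{-1/2}A)^\top$, which \emph{is} symmetric PSD, so $\langle\tu^j,\Cov_{\tu,\tu}\tu^j\rangle\geq 0$ genuinely and that term can be dropped. This yields $\mathbb{E}|\tu^j|^{2p}\leq C$. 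Only then does the paper treat $|u^j|^{2p}$: the offending drift contribution $|u^j|^{2(p-1)}\langle u^j,\Cov_{u,\tu}\tu^j\rangle$ is \emph{bounded} (not discarded) by $|u^j|^{2p-1}\,|\tu^j|\,\tfrac1J\sum_k|e^k||\te^k|$, and a single H\"older step separates a power $(\mathbb{E}|u^j|^{2p})^{\gamma}$ with $\gamma<1$ from a factor controlled by the first pass and Lemma~\ref{anotherestimate}, after which a sublinear Gr\"onwall argument closes. Your preliminary conversion of $V_p(e)$ into covariance–moment bounds, and your handling of the $\Rm$–drift and the It\^o correction, are correct and match the paper; what is missing is this intermediate bound on the observable $\tu^j$ that makes the linear drift manageable.
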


We note that the linear case with $p=2$ was studied in~\cite{DCPS} (Proposition 4.11 and 5.1). This will not be enough for our use in the later section since our analysis crucially depends on the boundedness of higher moments. We leave the proof in Appendix \ref{appendixhighmoment}.

Combining Lemma~\ref{prop:highmomentforpac2} and Lemma~\ref{prop:highmomentbound}, using triangle inequality we have:
\begin{corollary}\label{cor:prebound}
Under conditions of Proposition \ref{thm:vj_uj}, for any $2\leq p<\infty$ and large enough $J$ (larger than $J_p$ as defined in Lemma \ref{anotherestimate}), we have a constant $C_p$ independent of $J$ such that for all $1\leq j\leq J$ and $0\leq t\leq 1$
\begin{equation}\label{uvtrivialbound}
\mathbb{E}|u^j_t-v^j_t|^p=\mathrm{E}|u^1_t-v^1_t|^p\leq C_p\,.
\end{equation}
\end{corollary}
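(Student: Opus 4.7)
The corollary is essentially an immediate consequence of three observations, so my plan is short: use exchangeability, use the elementary inequality $|a-b|^p\le 2^{p-1}(|a|^p+|b|^p)$, and then quote the two preceding lemmas.

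First I would justify the equality $\mathbb{E}|u^j_t-v^j_t|^p = \mathbb{E}|u^1_t-v^1_t|^p$. The SDE system \eqref{eqn:SDE} for $\{u^j_t\}$ is permutation symmetric in the index $j$: the coefficients depend on the particles only through sums of symmetric functions of all particles, and the initial data $\{u^j_0\}$ are i.i.d. Similarly, the bridge system \eqref{eqn:mean_field} for $\{v^j_t\}$ decouples across $j$ with coefficients depending only on $\rho_t$, and we are coupling the two systems by using the same initial data and the same driving Brownian motions $\{W^j_t\}$. Therefore the joint law of $(u^j_t, v^j_t)$ is the same for every $j$, which gives the claimed exchangeability.

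Next I would apply the convexity bound $|a-b|^p \le 2^{p-1}(|a|^p+|b|^p)$ for $p\ge 1$ with $a=u^j_t$ and $b=v^j_t$. Taking expectation yields
\[
\mathbb{E}|u^j_t-v^j_t|^p \le 2^{p-1}\left(\mathbb{E}|u^j_t|^p+\mathbb{E}|v^j_t|^p\right).
\]
At this point the two terms on the right are controlled directly: Lemma \ref{prop:highmomentbound} (for the EKI particle system) gives $\mathbb{E}|u^j_t|^p \le C_p$ uniformly in $0\le t\le 1$ provided $J>J_p$, while Lemma \ref{prop:highmomentforpac2} (for the bridge particles, which are i.i.d.\ samples from $\rho_t$) gives $\mathbb{E}|v^j_t|^p\le C_p$ by Lemma \ref{postervariance}. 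Absorbing the constant $2^{p-1}$ and the two bounds into a single constant still called $C_p$ finishes the proof.

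There is no real obstacle here: all the work has been done in Lemmas \ref{prop:highmomentforpac2} and \ref{prop:highmomentbound}, and the only subtlety worth spelling out is exchangeability, which relies on coupling the two SDE systems through identical initial data and identical Brownian drivers so that the law of $(u^j_t,v^j_t)$ does not depend on $j$.
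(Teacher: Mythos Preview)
Your proposal is correct and matches the paper's approach exactly: the paper simply states that the corollary follows by ``combining Lemma~\ref{prop:highmomentforpac2} and Lemma~\ref{prop:highmomentbound}, using triangle inequality,'' which is precisely your convexity bound plus the two moment lemmas. Your additional explanation of exchangeability is a welcome elaboration but not a departure from the paper's argument.
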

\subsection{Proof of Proposition~\ref{thm:vj_uj}}
To show Proposition~\ref{thm:vj_uj}, we first unify the notations. Without loss of generality, we let $u^\dagger=\vec{0}$. We further use the following notations for conciseness. Let
\[
x^j_t=u^j_t-v^j_t\,,\quad p^j_t=x^j_t-\overline{x}_t\,,
\]
and denote (call them observables)
\[
\begin{aligned}
&\tx^j_t=\Gamma^{-\frac{1}{2}}Ax^j_t\,,\quad \tu^j_t=\Gamma^{-\frac{1}{2}}Au^j_t\,,\\
&\tv^j_t=\Gamma^{-\frac{1}{2}}Av^j_t\,,\quad \tp^j_t=\Gamma^{-\frac{1}{2}}A(x^j_t-\overline{x}_t)\,,\\
&\tq^j_t=\Gamma^{-\frac{1}{2}}A(v^j_t-\overline{v}_t)\,.
\end{aligned}
\]
We also use notation $A\lesssim O(J^\alpha)$ to mean that there is a constant $C$ independent of $J$ so that $A\leq CJ^{\alpha}$.

To prove the theorem amounts to tracing the evolution of $\mathbb{E}|x^j_t|^2$ as a function of time and $J$. For that we use the bootstrapping argument, namely, we assume $\mathbb{E}|x^j_t|^2$ decays in $J$ with certain rate (could be $0$, as have already suggested in Lemma~\ref{prop:highmomentbound} and Corollary~\ref{cor:prebound}), then by following the flow of the SDE we can show the rate can be tightened till a threshold is achieved. This threshold is exactly the rate one needs to prove in Proposition~\ref{thm:vj_uj}.

The tightening procedure is discussed in Lemma~\ref{lem:bound_tx} and Lemma~\ref{lem:bound_x} respectively for observables $\tx^j_t$, and the true error $x^j_t$. The proof of the proposition is an immediate consequence.

In the proofs we will constantly use the fact that
\[
\mathbb{E}|\tp^j_t|^2=\mathbb{E}|\tp^1_t|^2\,,\ \mathbb{E}|\tx^j_t|^2=\mathbb{E}|\tx^1_t|^2
\]
for all $0\leq t\leq 1$ and $1\leq j\leq J$. When the context is clear, we also omit subscript $t$ for the simplicity of the notation. 

We first show $|\overline{x}|^2,|p^j|^2,|\overline{\tx}|^2,|\tp^j|^2$ can be bounded by $|x^j|^2$. 
\begin{lemma}\label{lem:tpoverlinex}
For any $0\leq\alpha<1$, and $0\leq t\leq 1$, with the definition above, if one has:
\begin{equation}\label{preboundconditionforx0}
\mathbb{E}|x^j|^2\lesssim O\left(J^{-\alpha}\right)\,
\end{equation}
for all $1\leq j\leq J$, then
\begin{equation}\label{tpoverlinex2}
\mathbb{E}|\tx^j|^2\lesssim O\left(J^{-\alpha}\right)\,
\end{equation}
and
\begin{equation}\label{tpoverlinex1}
\mathbb{E}|p^j|^2\lesssim O\left(J^{-\alpha}\right)\,,\quad \mathbb{E}|\tp^j|^2\lesssim O\left(J^{-\alpha}\right)\,
\end{equation}
for all $1\leq j\leq J$.
\end{lemma}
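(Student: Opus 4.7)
The proof plan is essentially a bookkeeping exercise combining triangle inequality, operator norm bounds, and the exchangeability of the particles; no delicate stochastic analysis is needed. I would carry it out in three short steps, in the order below.

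First, the bound on $\mathbb{E}|\tx^j|^2$ is the most immediate. Since $\tx^j = \Gamma^{-1/2}A x^j$, I would write $\mathbb{E}|\tx^j|^2 \leq \|\Gamma^{-1/2}A\|_2^2\,\mathbb{E}|x^j|^2$ and invoke the hypothesis \eqref{preboundconditionforx0} directly. This gives \eqref{tpoverlinex2} with a constant depending only on $A$ and $\Gamma$.

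Next, I would bound $\mathbb{E}|\overline{x}|^2$ as an intermediate step, since $p^j = x^j - \overline{x}$. By Jensen (or Cauchy--Schwarz),
\begin{equation*}
|\overline{x}|^2 = \Bigl|\frac{1}{J}\sum_{k=1}^J x^k\Bigr|^2 \leq \frac{1}{J}\sum_{k=1}^J |x^k|^2.
\end{equation*}
Taking expectation and exploiting the exchangeability of the particles ($\mathbb{E}|x^k|^2 = \mathbb{E}|x^1|^2$ for all $k$), one obtains $\mathbb{E}|\overline{x}|^2 \leq \mathbb{E}|x^1|^2 \lesssim O(J^{-\alpha})$. Combining with the elementary inequality $|p^j|^2 \leq 2|x^j|^2 + 2|\overline{x}|^2$ yields the first half of \eqref{tpoverlinex1}.

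Finally, for $\tp^j = \Gamma^{-1/2}A\,p^j$, I would again use the operator norm bound $\mathbb{E}|\tp^j|^2 \leq \|\Gamma^{-1/2}A\|_2^2\,\mathbb{E}|p^j|^2$ and plug in the bound just obtained for $\mathbb{E}|p^j|^2$ to conclude the second half of \eqref{tpoverlinex1}. There is no real obstacle here: all four bounds follow from the single input \eqref{preboundconditionforx0} together with boundedness of $\Gamma^{-1/2}A$ and symmetry across particle indices. The lemma serves as a clean interface so that, once the later bootstrapping step improves the rate for $\mathbb{E}|x^j|^2$, the rate for the perturbed and observable versions automatically inherits the same improvement.
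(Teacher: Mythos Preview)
Your proposal is correct and follows essentially the same approach as the paper: operator norm bounds for the observable quantities and a triangle-inequality/exchangeability argument for the centered quantities. The only cosmetic differences are the ordering of the steps and that the paper writes $p^j=\frac{J-1}{J}x^j-\frac{1}{J}\sum_{k\neq j}x^k$ and applies Minkowski in $L^2(\Omega)$ directly, whereas you go through $\overline{x}$ and the inequality $|p^j|^2\leq 2|x^j|^2+2|\overline{x}|^2$; both routes give the same bound with the same constants up to a factor of two.
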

\begin{proof} Due to~\eqref{preboundconditionforx0}, we first have for all $j$,
\[
\begin{aligned}
\left(\mathbb{E}|p^j|^2\right)^{\frac{1}{2}}&=\left(\mathbb{E}\left|\frac{J-1}{J}x^j-\frac{1}{J}\sum^J_{k\neq j}x^k\right|^2\right)^{\frac{1}{2}}\\
&\leq 2\left(\mathbb{E}|x^1|^2\right)^{\frac{1}{2}}\lesssim O\left(J^{-\frac{\alpha}{2}}\right)\,
\end{aligned}
\]
and 
\[
\left(\mathbb{E}|\overline{x}|^2\right)^{\frac{1}{2}}\leq \frac{1}{J}\sum^J_{j=1}\left(\EE|x^j|^2\right)^{\frac{1}{2}}\lesssim O\left(J^{-\frac{\alpha}{2}}\right)\,,
\]
which implies first inequality in \eqref{tpoverlinex1}. Then we also have an estimate for $\tx^j$:
\[
\mathbb{E}|\tx^j|^2\lesssim \|\Gamma^{-\frac{1}{2}}A\|_2\mathbb{E}|x^j|^2\lesssim O\left(J^{-\alpha}\right)\,,
\]
which implies \eqref{tpoverlinex2} and it also leads to
\[
\begin{aligned}
\left(\mathbb{E}|\tp^j|^2\right)^{\frac{1}{2}}&=\left(\mathbb{E}\left|\frac{J-1}{J}\tx^j-\frac{1}{J}\sum^J_{k\neq j}\tx^k\right|^2\right)^{\frac{1}{2}}\\
&\leq 2\left(\mathbb{E}|\tx^1|^2\right)^{\frac{1}{2}}\lesssim O\left(J^{-\frac{\alpha}{2}}\right)\,
\end{aligned}
\]
and 
\[
\left(\mathbb{E}|\overline{\tx}|^2\right)^{\frac{1}{2}}\leq \frac{1}{J}\sum^J_{j=1}\left(\EE|\tx^j|^2\right)^{\frac{1}{2}}\lesssim O\left(J^{-\frac{\alpha}{2}}\right)\,.
\]
This finishes the proof.\qed
\end{proof}

Then we show if we already have an a-priori estimate for $\{x^j\}$, we can have a better control for $\{\tx^j\}$.

\begin{lemma}\label{lem:bound_tx}
For any $0\leq\alpha<1$, and $0\leq t\leq 1$, if one has:
\begin{equation}\label{preboundconditionforx}
\mathbb{E}|x^j|^2\lesssim O\left(J^{-\alpha}\right)\,,
\end{equation}
for all $j$, then for any $0<\epsilon<\frac{1}{4}$ , there is $C_\epsilon<\infty$ independent of $J$ and $t$ such that
\begin{equation*}\label{pestimation}
\begin{aligned}
&\mathbb{E}|\tp^j|^2=\mathbb{E}\left|\tx^j-\frac{1}{J}\sum^J_{k}\tx^k\right|^2\leq C_\epsilon J^{-\frac{1}{2}-\frac{\alpha}{2}+\epsilon}\,,\\
&\mathbb{E}|\tx^j|^2\leq C_\epsilon J^{-\frac{1}{2}-\frac{\alpha}{2}+\epsilon}\,.
\end{aligned}
\end{equation*}
for all $j$. Note for any $\alpha<1$, we can choose $\epsilon<1-\alpha$ to make $\frac{1}{2}+\frac{\alpha}{2}-\epsilon>\alpha$.
\end{lemma}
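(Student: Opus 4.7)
The plan is to derive an SDE for $\tx^j_t=\Gamma^{-\frac{1}{2}}Ax^j_t$ by subtracting \eqref{eqn:mean_field} from \eqref{eqn:SDE} and left-multiplying by $\Gamma^{-\frac{1}{2}}A$. Because $u^j$ and $v^j$ are coupled to the same Brownian motion $W^j$, the diffusion contributions combine into a single It\^o martingale whose coefficient is the difference of the diffusion matrices appearing in the two systems. I would then apply It\^o's formula to $|\tx^j_t|^2$, which expresses $\frac{d}{dt}\mathbb{E}|\tx^j_t|^2$ as the sum of an inner product with the drift difference and the trace of the quadratic variation of the diffusion difference.

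The core algebraic step is to split each coefficient difference into a ``sample vs.\ sample'' and a ``sample vs.\ limit'' part, e.g.\
\begin{equation*}
\mathrm{Cov}_{u_t,u_t}-\mathrm{Cov}_{\rho_t}=\bigl(\mathrm{Cov}_{u_t,u_t}-\mathrm{Cov}_{v_t,v_t}\bigr)+\bigl(\mathrm{Cov}_{v_t,v_t}-\mathrm{Cov}_{\rho_t}\bigr),
\end{equation*}
and similarly for $\mathrm{Cov}_{u_t,\mathcal{G}}-\mathrm{Cov}_{\rho_t,\mathcal{G}}$. The second piece is controlled in arbitrarily high $L^p$ by $J^{-1/2}$ using Lemma~\ref{prop:highmomentforpac2}. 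The first piece is a sample-mean expression in products built out of $x^j$ and $\tx^j$; by Lemma~\ref{lem:tpoverlinex} the a-priori assumption \eqref{preboundconditionforx} promotes to $L^2$ bounds of order $J^{-\alpha/2}$ on $x^j$, $p^j$, $\tx^j$, $\tp^j$. Pairing one factor of each kind via H\"older's inequality produces the target scale $J^{-\frac{1}{2}-\frac{\alpha}{2}}$. The extra $J^{\epsilon}$ slack is spent using H\"older with exponents close to, but not equal to, the extremes, so that the residual factors can be absorbed into the uniform higher-moment bounds provided by Lemma~\ref{prop:highmomentbound} and Corollary~\ref{cor:prebound}.

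The nonlinear contributions coming from $\Rm$ are tamed by the weak-nonlinearity assumption \eqref{linear}: since $|\Rm|+|\nabla\Rm|\leq M$ and $\text{Range}(\Rm)\perp_{\Gamma^{-1}}\text{Range}(A)$, the cross-terms between $\Rm$ and $A^{\top}\Gamma^{-1}A$ in the equation for $\tx^j$ collapse, leaving only bounded products that are estimable as above. Combining all contributions, It\^o's formula produces an inequality of the form
\begin{equation*}
\frac{d}{dt}\mathbb{E}|\tx^j_t|^2\leq C\,\mathbb{E}|\tx^j_t|^2+C_\epsilon J^{-\frac{1}{2}-\frac{\alpha}{2}+\epsilon},
\end{equation*}
and Gr\"onwall's inequality on the compact interval $[0,1]$ closes the estimate on $\mathbb{E}|\tx^j_t|^2$. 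The bound on $\mathbb{E}|\tp^j_t|^2$ then follows from $\tp^j=\tx^j-\overline{\tx}$ together with the analogous computation on $\overline{\tx}$, which actually benefits from an additional $J^{-1/2}$ gain due to averaging.

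The main obstacle, I expect, is bookkeeping: It\^o's formula generates many cross-terms, and for each one I must correctly assign its factors either the rate $J^{-\alpha/2}$ (from the a-priori on $x^j$), $J^{-1/2}$ (from Lemma~\ref{prop:highmomentforpac2}), or an $O(1)$ moment bound (from Lemma~\ref{prop:highmomentbound} and Corollary~\ref{cor:prebound}), in such a way that no single term exceeds $J^{-\frac{1}{2}-\frac{\alpha}{2}+\epsilon}$. The restriction $\epsilon<\frac{1}{4}$ in the statement, along with the implicit requirement $\epsilon<1-\alpha$ highlighted at the end of the lemma, matters precisely for this: it guarantees $\frac{1}{2}+\frac{\alpha}{2}-\epsilon>\alpha$, so that each invocation of this lemma strictly improves the bootstrapping rate and the iteration converges to the optimal one.
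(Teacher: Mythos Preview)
Your proposal is essentially the paper's approach: derive the SDE for $\tx^j$ by applying $\Gamma^{-1/2}A$ to the difference of \eqref{eqn:SDE} and \eqref{eqn:mean_field}, apply It\^o's formula to $|\tx^j|^2$, split each coefficient difference into a $(u\!-\!v)$ part and a $(v\!-\!\rho)$ part, control the latter by Lemma~\ref{prop:highmomentforpac2} at rate $J^{-1/2}$, and control the former by the a-priori bound \eqref{preboundconditionforx} via Lemma~\ref{lem:tpoverlinex} together with the uniform moment bounds of Lemma~\ref{prop:highmomentbound} and Corollary~\ref{cor:prebound}.

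The one place where you oversimplify is the closing step. The H\"older bookkeeping you describe does \emph{not} directly yield a linear inequality $\frac{d}{dt}\mathbb{E}|\tx^j|^2\leq C\,\mathbb{E}|\tx^j|^2+C_\epsilon J^{-\frac{1}{2}-\frac{\alpha}{2}+\epsilon}$; some cross-terms unavoidably produce sublinear contributions of the form $C_\epsilon J^{-1/4}(\mathbb{E}|\tx^1|^2)^{1-\epsilon}$ and $C_\epsilon J^{-1/2}(\mathbb{E}|\tx^1|^2)^{(2-\epsilon)/4}$, because the factor carrying the $J^{-\alpha/2}$ rate (namely $\tp^k$ or $\tx^j$) has only its \emph{second} moment small, so H\"older with non-extreme exponents costs a power. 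The paper handles this by rescaling $\mathsf{X}^\beta:=J^\beta\mathbb{E}|\tx^1|^2$ and arguing that $\|\mathsf{X}^\beta\|_{L^\infty}\lesssim\max\{1,J^{-1/4+\epsilon\beta},J^{-1/2-\alpha/2+\alpha\epsilon/4+\beta}\}$, then optimizing $\beta$. Your linear Gr\"onwall can be recovered from the same differential inequality by one extra application of Young's inequality to the sublinear terms (e.g.\ $J^{-1/4}Y^{1-\epsilon}\leq (1-\epsilon)Y+\epsilon J^{-1/(4\epsilon)}$, and $1/(4\epsilon)>1$ for $\epsilon<1/4$), so there is no genuine gap---but you should flag that step rather than assert the linear form directly. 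Finally, for $\mathbb{E}|\tp^j|^2$ no separate computation on $\overline{\tx}$ is needed: the paper simply uses $\mathbb{E}|\tp^j|^2\leq 4\,\mathbb{E}|\tx^1|^2$ as in the proof of Lemma~\ref{lem:tpoverlinex}.
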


\begin{proof}
Firstly, by Lemma \ref{lem:tpoverlinex} equations \eqref{tpoverlinex2},\eqref{tpoverlinex1} we have a rough estimate for $\tx^j,\tp^j,\overline{\tx}$
\begin{equation}\label{pestimationforp}
\begin{aligned}
&\mathbb{E}|\tx^j|^2\lesssim O\left(J^{-\alpha}\right),\ \mathbb{E}|\tp^j|^2\lesssim O\left(J^{-\alpha}\right),\\
&\mathbb{E}|\overline{\tx}|^2\lesssim O\left(J^{-\alpha}\right)\,.
\end{aligned}
\end{equation}
Apply $\Gamma^{-\frac{1}{2}}A$ on both sides of~\eqref{eqn:SDE} and~\eqref{eqn:mean_field}, we have the evolution of the observables:
\begin{equation}\label{eqn:star1}
\begin{aligned}
{d\tu^{j}}=&-\mathrm{Cov}_{\tu_t,\tu_t}\tu^j dt+\mathrm{Cov}_{\tu_t,\tu_t}dW^{j}_t\\
&+\mathrm{Cov}_{\tu_t,\Rm}\Gamma^{-1}\left(r-\Rm(u^j)\right)dt\\
&+\mathrm{Cov}_{\tu_t,\Rm}\Gamma^{-\frac{1}{2}}dW^{j}_t
\end{aligned}\,
\end{equation}
and
\begin{equation}\label{eqn:star2}
\begin{aligned}
d\tv^{j}=&-\Gamma^{-\frac{1}{2}}A\mathrm{Cov}_{\rho_t}A^\top \Gamma^{-\frac{1}{2}}\tv^{j}dt\\
&+\Gamma^{-\frac{1}{2}}A\mathrm{Cov}_{\rho_t}A^\top \Gamma^{-\frac{1}{2}}dW^{j}_t\\
&+\Gamma^{-\frac{1}{2}}A\mathrm{Cov}_{\rho_t,\Rm}\Gamma^{-1}\left(r-\Rm(v^j)\right)dt\\
&+\Gamma^{-\frac{1}{2}}A\mathrm{Cov}_{\rho_t,\Rm}\Gamma^{-\frac{1}{2}}dW^{j}_t\,.
\end{aligned}
\end{equation}
Subtracting the two equations we can derive the evolution of $\tx^j$. With the calculation shown in Supp. A, for any $0<\epsilon<\frac{1}{4}$, there is a $J^*_\epsilon>0$ such that for $J>J^*_\epsilon$ and $0\leq t\leq 1$
\begin{equation}\label{eqn:firstevolution}
\begin{aligned}
&\frac{d\frac{1}{J}\sum^J_{j=1}\EE|\tx^j|^2}{dt}\\
\leq&\ C_\epsilon J^{-\frac{1}{4}}\left(\left(\EE|\tx^1|^2\right)^{1-\epsilon}+\left(\EE|\overline{\tx}|^2\right)^{1-\epsilon}+\left(\EE|\tp^1|^2\right)^{1-\epsilon}\right)\\
&+C\left(\EE|\tx^1|^2+\EE|\tp^1|^2\right)\\
&+C_\epsilon J^{-\frac{1}{2}}\left(\left(\mathbb{E}\left|\tx^1\right|^2\right)^{\frac{2-\epsilon}{4}}+\left(\mathbb{E}\left|\tp^1\right|^2\right)^{\frac{2-\epsilon}{4}}\right)\\
&+C_\epsilon J^{-\frac{1}{2}}\EE|\tx^1|^2+C_\epsilon J^{-1}\,,
\end{aligned}
\end{equation}
where $C_\epsilon$ is a constant independent of $J$ and $t$. This leads to, plugging in~\eqref{preboundconditionforx} and~\eqref{pestimationforp}:
\begin{equation*}
\begin{aligned}
\frac{d\mathbb{E}|\tx^1|^2}{dt}=&\frac{1}{J}\sum^J_{j=1}\frac{d\mathbb{E}|\tx^j|^2}{dt}\\
\leq&\ C_\epsilon\EE|\tx^1|^2+C_\epsilon J^{-\frac{1}{4}}\left(\EE|\tx^1|^2\right)^{1-\epsilon}\\
&\ +C_\epsilon J^{-\frac{1}{2}-\frac{\alpha}{2}+\frac{\alpha\epsilon}{4}}\,.
\end{aligned}
\end{equation*}
Define $\mathsf{X}^\beta=\mathbb{E}J^\beta|\tx^1|^{2}$, the equation rewrites as
\[
\begin{aligned}
\frac{d\mathsf{X}^\beta}{dt}\leq C_\epsilon\mathsf{X}^\beta&+C_\epsilon J^{-\frac{1}{4}+\epsilon\beta}\left(\mathsf{X}^\beta\right)^{1-\epsilon}\\
&+C_\epsilon J^{-\frac{1}{2}-\frac{\alpha}{2}+\frac{\alpha\epsilon}{4}+\beta}\,.
\end{aligned}
\]
Because $\mathsf{X}^\beta(0)=0$, this implies
\begin{equation}\label{Gronexample1}
\|\mathsf{X}^\beta\|_{L^\infty}\lesssim \max\left\{O(1),J^{-\frac{1}{4}+\epsilon\beta},J^{-\frac{1}{2}-\frac{\alpha}{2}+\frac{\alpha\epsilon}{4}+\beta}\right\}\,,
\end{equation}
for $J>J^*_\epsilon$. For $J\leq J^*_\epsilon$, according to Corollary \ref{cor:prebound}, one still has
\[
\|\mathsf{X}^\beta\|_{L^\infty}\leq (J^{\ast}_\epsilon)^\beta\sup_{0\leq t\leq 1}\EE|\tx^1_t|^2\leq (J^{\ast}_\epsilon)^\beta C\lesssim O(1)\,.
\]
This can be absorbed in \eqref{Gronexample1} and \eqref{Gronexample1} is true for any $J>0$.

This finally suggests, if we choose $\beta=\frac{1}{2}+\frac{\alpha}{2}-\frac{\alpha\epsilon}{4}$, then
\[
\mathbb{E}|\tx^j|^2=\mathbb{E}|\tx^1|^{2}_2\lesssim O\left(J^{-\frac{1}{2}-\frac{\alpha}{2}+\frac{\alpha\epsilon}{4}}\right)\,,
\]
and
\[
\mathbb{E}|\tp^j|^2\leq 2\mathbb{E}|\tx^j|^2=2\mathbb{E}|\tx^1|^{2}_2\lesssim O\left(J^{-\frac{1}{2}-\frac{\alpha}{2}+\frac{\alpha\epsilon}{4}}\right)\,,
\]
for any $0<\epsilon<\frac{1}{4}$ and $1\leq j\leq J$. The $O$ notation includes a constant $C_\epsilon$ that has $\epsilon$ dependence.\qed
\end{proof}

This allows us to give a tighter bound for $\mathbb{E}|x^j|^2$:
\begin{lemma}\label{lem:bound_x}
For any $0\leq\alpha<1$, $0\leq t\leq 1$, if we have an estimate of:
\begin{equation}\label{preboundconditionforx2}
\mathbb{E}|x^j|^2\lesssim O\left(J^{-\alpha}\right)\,,
\end{equation}
for all $j$, then one can tighten it to: for any $0<\epsilon<\frac{1}{4}$, there is a constant $C_\epsilon$ independent of $J$ and $t$ such that
\begin{equation}\label{pestimationpx2}
\mathbb{E}|p^j|^2\leq C_\epsilon J^{-\frac{1}{2}-\frac{\alpha}{2}+\epsilon}\,,\ \mathbb{E}|x^j|^2\leq C_\epsilon J^{-\frac{1}{2}-\frac{\alpha}{2}+\epsilon}\,.
\end{equation}
for all $j$. Note for any $\alpha<1$, we can choose $\epsilon<1-\alpha$ to make $\frac{1}{2}+\frac{\alpha}{2}-\epsilon>\alpha$.
\end{lemma}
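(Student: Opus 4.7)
The plan is to run essentially the same bootstrapping machine used for Lemma \ref{lem:bound_tx}, but now on $x^j$ itself instead of its observable projection $\tx^j$. First I would derive the SDE for $x^j_t = u^j_t - v^j_t$ by subtracting \eqref{eqn:mean_field} from \eqref{eqn:SDE} (after expressing the $v$-side via $\Cov_{\rho_t,\MCG} = \Cov_{\rho_t}A^\top + \Cov_{\rho_t,\Rm}$ and $y - \MCG(v^j) = A(u^\dagger - v^j) + r - \Rm(v^j)$), then split the resulting drift and diffusion into an \emph{$A$-family} (terms carrying $\Cov_{u_t}A^\top\Gamma^{-1}A$ or $\Cov_{u_t}A^\top\Gamma^{-1/2}$ and their $v$-counterparts) and an \emph{$\Rm$-family} (terms carrying $\Cov_{u_t,\Rm}$ and $\Cov_{\rho_t,\Rm}$). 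The orthogonality $\Rm \perp_{\Gamma^{-1}} A$ in \eqref{linear} guarantees these two families stay decoupled after contraction with $\Gamma^{-1}$.

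Next I would apply Itô's formula to $\frac{1}{J}\sum_j|x^j|^2$ and take expectations. The $A$-family drift and quadratic-variation contributions can all be written in terms of $\tx^j,\tp^j,\overline{\tx}$ (using identities like $\Cov_{u_t}A^\top = A^{-L}\Gamma^{1/2}\Cov_{\tu_t}$ style reshufflings whenever the $A$ appears flanked on both sides, and using $Au^j - Av^j = \Gamma^{1/2}\tx^j$), so Lemma \ref{lem:bound_tx} immediately yields an $O(J^{-1/2-\alpha/2+\epsilon})$ bound for them. The $\Rm$-family terms are handled by the Lipschitz and boundedness assumption on $\Rm$: $|\Rm(u^j)-\Rm(v^j)| \le M|x^j|$ contributes a Lipschitz-in-$x$ term amenable to Grönwall, while the covariance differences
\[
\Cov_{u_t,\Rm}-\Cov_{\rho_t,\Rm}=(\Cov_{u_t,\Rm}-\Cov_{v_t,\Rm})+(\Cov_{v_t,\Rm}-\Cov_{\rho_t,\Rm})
\]
are controlled by $\frac{1}{J}\sum_k \EE|x^k|^2$ via the Lipschitz-and-bounded structure of $\Rm$ for the first piece, and by $O(J^{-1/2})$ using Lemma \ref{prop:highmomentforpac2} for the second piece. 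A-priori moment bounds from Lemma \ref{prop:highmomentbound} and Corollary \ref{cor:prebound} are used throughout to make Hölder splittings legal, as was done in Lemma \ref{lem:bound_tx}.

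Collecting all contributions I expect an inequality of the Grönwall shape
\[
\frac{d}{dt}\frac{1}{J}\sum_{j=1}^J\EE|x^j_t|^2 \le C\,\frac{1}{J}\sum_{j=1}^J\EE|x^j_t|^2 + C_\epsilon J^{-\frac{1}{2}-\frac{\alpha}{2}+\epsilon},
\]
then rescaling $\mathsf{X}^\beta := J^\beta \EE|x^1_t|^2$ and invoking $x^j_0=0$ exactly as in the proof of Lemma \ref{lem:bound_tx} gives $\EE|x^j_t|^2 \lesssim J^{-\frac{1}{2}-\frac{\alpha}{2}+\epsilon}$. The bound on $\EE|p^j|^2$ then follows by the same triangle-inequality argument used in Lemma \ref{lem:tpoverlinex} (applied to $x$, $\overline{x}$).

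The main obstacle is the bookkeeping in Step~2: specifically, the cross terms of the form
\[
\bigl(\Cov_{u_t,u_t}-\Cov_{\rho_t}\bigr)A^\top\Gamma^{-1}A\,v^j \quad\text{and}\quad \bigl(\Cov_{u_t,\Rm}-\Cov_{\rho_t,\Rm}\bigr)\Gamma^{-1}(r-\Rm(v^j))
\]
are \emph{not} bilinear in $x$, so one cannot immediately hit them with Grönwall without first absorbing the $v^j$, $\Rm(v^j)$ factors via Hölder together with the uniformly-bounded high-moment estimates. Keeping the rate $\tfrac12+\tfrac{\alpha}{2}-\epsilon$ sharp in each of these terms (especially where $\Cov_{u_t}$-versus-$\Cov_{v_t}$ decompositions introduce their own $J^{-1/2}$ fluctuations on top of the $\alpha$-improvement) is the bulk of the real work, and will mirror the $\epsilon$-loss bookkeeping carried out in Supp.~A for the observable case.
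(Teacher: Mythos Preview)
Your overall strategy matches the paper's: subtract the two SDEs, apply It\^o to $\frac{1}{J}\sum_j|x^j|^2$, control the resulting drift and quadratic-variation terms via H\"older together with the a-priori moment bounds and Lemma~\ref{lem:bound_tx}, and close with a Gr\"onwall-type argument. Two points of divergence are worth flagging.

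First, the statement that the $A$-family drift contributions ``can all be written in terms of $\tx^j,\tp^j,\overline{\tx}$'' is not quite right. In the It\^o term $2\langle x^j,\text{drift}\rangle$ the outer factor $x^j$ lives in $\mathbb{R}^L$ and cannot be reshuffled into a $\tx$-quantity; for instance
\[
\bigl\langle x^j,(\Cov_{u_t}-\Cov_{v_t})A^\top\Gamma^{-1/2}\tv^j\bigr\rangle
=\frac{1}{J}\sum_k\bigl[\langle x^j,q^k\rangle\langle\tp^k,\tv^j\rangle+\langle x^j,p^k\rangle\langle\tq^k,\tv^j\rangle+\langle x^j,p^k\rangle\langle\tp^k,\tv^j\rangle\bigr]
\]
mixes $x^j,p^k$ (in $\mathbb{R}^L$) with $\tp^k,\tq^k$ (in $\mathbb{R}^K$). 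Lemma~\ref{lem:bound_tx} controls only the $\tp^k$ factors; the $x^j,p^k$ factors have to be peeled off by H\"older against high moments, exactly the ``bulk of the real work'' you identify at the end. So the $A$-family is not a pure forcing term of size $J^{-1/2-\alpha/2+\epsilon}$: it also produces linear-in-$\EE|x^1|^2$ pieces (from $\langle x^j,p^k\rangle$-type products) that feed the Gr\"onwall side, not just the $\Rm$-family.

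Second, the paper does \emph{not} in fact arrive at the clean Gr\"onwall inequality you anticipate. Its differential inequality (derived in the supplementary material, analogous to \eqref{eqn:firstevolution}) retains sublinear powers such as $J^{-1/4}(\EE|x^1|^2)^{1-\epsilon}$ and $J^{-1/4-\alpha/4+\epsilon/2}(\EE|x^1|^2)^{(2-\epsilon)/4}$, and then closes by the rescaling $\mathsf{X}^\beta=J^\beta\EE|x^1|^2$ with the specific choice $\beta=(1+\alpha)/(2+\epsilon)$, balancing several competing exponents. Your clean Gr\"onwall is in principle recoverable from this via Young's inequality on each sublinear term, so the two endings are equivalent; but the paper's route avoids having to check that every Young split lands exactly at $J^{-1/2-\alpha/2+\epsilon}$.
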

\begin{proof}
Firstly, by Lemma \ref{lem:tpoverlinex} equation \eqref{tpoverlinex1}, we have a rough estimate for $p^j,\overline{x}^j$
\begin{equation}\label{pestimationforpp2}
\mathbb{E}|p^j|^2\lesssim O\left(J^{-\alpha}\right),\ \mathbb{E}|\overline{x}|^2\lesssim O\left(J^{-\alpha}\right)\,.
\end{equation}
Similar to deriving~\eqref{eqn:firstevolution}, we subtract the two particle systems \eqref{eqn:SDE} and \eqref{eqn:mean_field}. With the calculation in Supp. B and Lemma \ref{lem:bound_tx}, for any $0<\epsilon<\frac{1}{4}$, there is a $J^*_\epsilon>0$ such that for $J>J^*_\epsilon$ and $0\leq t\leq 1$
\begin{equation}\label{eqn:firstactualevolution1star}
\begin{aligned}
&\frac{1}{J}\sum^J_{j=1}\frac{d\mathbb{E}|x^j|^2}{dt}\\
&\leq C_\epsilon J^{-\frac{1}{2}}\left(\left(\mathbb{E}\left|x^1\right|^2\right)^{\frac{2-\epsilon}{4}}+\left(\mathbb{E}\left|p^1\right|^2\right)^{\frac{2-\epsilon}{4}}\right)\\
&\ +C_\epsilon J^{-\frac{1}{4}-\frac{\alpha}{4}+\frac{\epsilon}{2}}\left(\left(\EE|\overline{x}|^{2}\right)^{\frac{2-\epsilon}{4}}+\left(\EE|p^1|^{2}\right)^{\frac{2-\epsilon}{4}}\right)\\
&\ +C_\epsilon J^{-\frac{1}{4}}\left(\left(\EE|x^1|^2\right)^{1-\epsilon}+\left(\EE|\overline{x}|^2\right)^{1-\epsilon}+\left(\EE|p^1|^2\right)^{1-\epsilon}\right)\\
&\ +C\left(\EE|x^1|^2+\EE|p^1|^2\right)+C_\epsilon J^{-\frac{1}{2}}\left(\mathbb{E}|x^1|^2\right)^{\frac{1}{2}}\\
&\ +C_\epsilon J^{-\frac{1}{2}-\frac{\alpha}{2}+\frac{\alpha\epsilon}{4}}\,,
\end{aligned}
\end{equation}
where $C_\epsilon$ is a constant independent of $J$ and $t$. Inserting \eqref{preboundconditionforx2},\eqref{pestimationforpp2} back into~\eqref{eqn:firstactualevolution1star}, we have the bounds for the first four terms:
\[
\begin{aligned}
&C_\epsilon J^{-\frac{1}{2}}\left(\left(\mathbb{E}\left|x^1\right|^2\right)^{\frac{2-\epsilon}{4}}+\left(\mathbb{E}\left|p^1\right|^2\right)^{\frac{2-\epsilon}{4}}\right)\\
\leq &C_\epsilon J^{-\frac{1}{2}-\frac{\alpha}{2}+\frac{\alpha\epsilon}{4}}\,\\
&C_\epsilon J^{-\frac{1}{4}-\frac{\alpha}{4}+\frac{\epsilon}{2}}\left(\left(\EE|\overline{x}|^{2}\right)^{\frac{2-\epsilon}{4}}+\left(\EE|p^1|^{2}\right)^{\frac{2-\epsilon}{4}}\right)\\
\leq &C_\epsilon J^{-\frac{1}{4}-\frac{\alpha}{4}+\frac{\epsilon}{2}}\left(\EE|x^1|^{2}\right)^{\frac{2-\epsilon}{4}}\,,\\
&C_\epsilon J^{-\frac{1}{4}}\left(\left(\EE|x^1|^2\right)^{1-\epsilon}+\left(\EE|\overline{x}|^2\right)^{1-\epsilon}+\left(\EE|p^1|^2\right)^{1-\epsilon}\right)\\
\leq &C_\epsilon J^{-\frac{1}{4}}\left(\EE|x^1|^2\right)^{1-\epsilon}\,\\
&C_\epsilon J^{-\frac{1}{2}}\left(\mathbb{E}|x^1|^2\right)^{\frac{1}{2}}\leq C_\epsilon J^{-\frac{1}{2}-\frac{\alpha}{2}}\,,
\end{aligned}
\] 
which implies, for $0<\epsilon<\frac{1}{4}$ and $J>J^\ast_\epsilon$:
\begin{equation*}
\begin{aligned}
\frac{d\mathbb{E}|x^1|^{2}}{dt}=&\frac{1}{J}\sum^J_{j=1}\frac{d\mathbb{E}|x^j|^2}{dt}\\
\leq&\ C_\epsilon J^{-\frac{1}{4}-\frac{\alpha}{4}+\frac{\epsilon}{2}}\left(\EE|x^1|^{2}\right)^{\frac{2-\epsilon}{4}}\\\
&+C_\epsilon J^{-\frac{1}{4}}\left(\EE|x^1|^2\right)^{1-\epsilon}+\EE|x^1|^{2}\\
&+J^{-\frac{1}{2}-\frac{\alpha}{2}+\frac{\alpha\epsilon}{4}}\,.
\end{aligned}
\end{equation*}

Similar to \eqref{Gronexample1}, define $\mathsf{X}^\beta=\mathbb{E}J^\beta|x^1|^{2}$, we have
\[
\begin{aligned}
\frac{d\mathsf{X}^\beta}{dt}\leq&\,C_\epsilon J^{-\frac{1}{4}-\frac{\alpha}{4}+\frac{\beta(2+\epsilon)}{4}}\left(\mathsf{X}^\beta\right)^{\frac{2-\epsilon}{4}}\\
&+C_\epsilon J^{-\frac{1}{4}+\epsilon\beta}\left(\mathsf{X}^\beta\right)^{1-\epsilon}\\
&+C_\epsilon\mathsf{X}^\beta+J^{-\frac{1}{2}-\frac{\alpha}{2}+\frac{\alpha\epsilon}{4}+\beta}\,,
\end{aligned}
\]
which implies 
\begin{equation}\label{xbetabound}
\begin{aligned}
\|\mathsf{X}^\beta\|_{L^\infty}\lesssim \max\{&O(1),J^{-\frac{1}{4}-\frac{\alpha}{4}+\frac{\beta(2+\epsilon)}{4}},\\
&J^{-\frac{1}{4}+\epsilon\beta},J^{-\frac{1}{2}-\frac{\alpha}{2}+\frac{\alpha\epsilon}{4}+\beta}\}
\end{aligned}\,.
\end{equation}
for $J>J^*_\epsilon$. Noting that 
\[
\|\mathsf{X}^\beta\|_{L^\infty}\leq (J^{\ast}_\epsilon)^\beta\sup_{0\leq t\leq 1}\EE|x^1_t|^2\leq (J^{\ast}_\epsilon)^\beta C\lesssim O(1)\,
\]
for all $J\leq J^\ast_\epsilon$ with constant $C$ stemming from the boundedness of Corollary \ref{cor:prebound}. We have \eqref{xbetabound} holds true for all $J>0$.
Therefore, we can choose $\beta=\frac{1+\alpha}{2+\epsilon}$ to obtain 
\[
\mathbb{E}|x^j|^2=\mathbb{E}|x^1|^{2}\lesssim O\left(J^{-\frac{1+\alpha}{2+\epsilon}}\right)
\]
for any $\epsilon<\frac{1}{4}$, which concludes \eqref{pestimationpx2}.\qed
\end{proof}

Finally, we are ready to prove Proposition~\ref{thm:vj_uj}.
\begin{proof}

We first note that by the definition of Wasserstein distance, for any $0\leq t\leq 1$
\[
\begin{aligned}
\mathbb{E}\left(W_2(M_{v_t},M_{u_t})\right)&\leq \left(\frac{1}{J}\sum^J_{j=1}\mathbb{E}|u^j_t-v^j_t|^2\right)^{\frac{1}{2}}\\
&=\left(\frac{1}{J}\sum^J_{j=1}\mathbb{E}|x^j_t|^2\right)^{\frac{1}{2}}
\end{aligned}\,,
\]
and thus the estimate~\eqref{eqn:W2_estimate} holds true once~\eqref{Expectationdifference} is shown. For that we directly apply Lemma~\ref{lem:bound_x}. Starting with $\alpha_0=0$ we recursively use the lemma, equation~\eqref{pestimationpx2} in particular, for
\[
\alpha_n=\frac{1}{2}+\alpha_{n-1}/2-\epsilon
\]
till the rate saturates to $\lim_{n\rightarrow\infty}\alpha_n=1-2\epsilon$. Since $\epsilon$ is an arbitrary small number, we conclude the proof.\qed
\end{proof}

\section{Acknowledgement}
The research of Q.L. and Z.D. was supported in part by National Science Foundation under award 1619778, 1750488 and Wisconsin Data Science Initiative. Both authors would like to thank Andrew Stuart for the helpful discussions.
\bibliographystyle{plain}
\bibliography{eki_2}
\appendix
\section{Moments bound of summation of indepedent mean-zero random variables}\label{sec:momentbound}
In this section, we prove a lemma which is used in proof of Lemma \ref{prop:highmomentforpac2}.
\begin{lemma}\label{lemma:momentboundindependent}
Assume $x_1,\cdots,x_J$ are $i.i.d$ random variables and satisfy (for $p\geq2$)
\[
\EE x_i=0,\quad \mathcal{L}_p=\EE |x_i|^p<\infty\,.
\]
Then we have
\[
\left(\EE\left|\sum^J_{j=1}x_j\right|^p\right)^{1/p}\leq CJ^{1/2}\,,
\]
where $C$ is a constant only depends on $\mathcal{L}_p$ and $p$.
\end{lemma}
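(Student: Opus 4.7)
The plan is to invoke the classical Marcinkiewicz--Zygmund inequality: for any $p \geq 1$ and any i.i.d.\ mean-zero random variables $x_1,\ldots,x_J$ with finite $p$-th moment, there is a constant $B_p$ depending only on $p$ such that
\begin{equation*}
\mathbb{E}\left|\sum_{j=1}^J x_j\right|^p \leq B_p \, \mathbb{E}\left(\sum_{j=1}^J x_j^2\right)^{p/2}.
\end{equation*}
With this in hand, the remainder is a one-line application of Jensen's inequality. Since $p \geq 2$, the map $t\mapsto t^{p/2}$ is convex, so the power-mean inequality gives
\begin{equation*}
\left(\sum_{j=1}^J x_j^2\right)^{p/2} = J^{p/2}\left(\frac{1}{J}\sum_{j=1}^J x_j^2\right)^{p/2} \leq J^{p/2-1}\sum_{j=1}^J |x_j|^p.
\end{equation*}
Taking expectations and using $\mathbb{E}|x_j|^p = \mathcal{L}_p$ yields $\mathbb{E}(\sum_j x_j^2)^{p/2} \leq J^{p/2}\mathcal{L}_p$, and substituting back into Marcinkiewicz--Zygmund produces the claim with $C=(B_p\mathcal{L}_p)^{1/p}$.

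If one prefers a self-contained argument rather than quoting Marcinkiewicz--Zygmund, the cleanest route is to first treat the case of even integer $p=2k$ by multinomial expansion:
\begin{equation*}
\mathbb{E}\left(\sum_{j=1}^J x_j\right)^{2k} = \sum_{j_1,\ldots,j_{2k}=1}^J \mathbb{E}\left[x_{j_1}\cdots x_{j_{2k}}\right].
\end{equation*}
By independence and the mean-zero assumption, a tuple contributes only if every distinct index among $j_1,\ldots,j_{2k}$ appears with multiplicity at least $2$. Hence at most $k$ distinct values of the indices occur, so the number of surviving tuples is bounded by a combinatorial constant (depending only on $k$) times $J^k$, and each surviving expectation is controlled by $\mathcal{L}_{2k}$ through H\"older. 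This establishes the result for $p=2k$, and the general real $p\geq 2$ case then follows by the monotonicity of $L^p$ norms: choose $2k\geq p$, apply the even-integer bound with $\mathcal{L}_{2k}$ replaced by a bound in terms of $\mathcal{L}_p$ via a truncation/interpolation argument, or simply fall back on Marcinkiewicz--Zygmund.

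The main obstacle is precisely this last point, the passage from integer to real $p$: the multinomial trick is clean only for even integers, so obtaining a constant that depends on $\mathcal{L}_p$ alone (not on $\mathcal{L}_{2\lceil p/2\rceil}$) genuinely requires an inequality of Marcinkiewicz--Zygmund type, which in turn rests on Khintchine's inequality or a symmetrization/randomization argument. Since the lemma is stated only for the sake of controlling empirical moments in the proof of Lemma~\ref{prop:highmomentforpac2}, and since $p$ there is an even integer, one could also simply state and prove the result under the additional hypothesis that $p$ is an even integer, which suffices for all uses in the paper and allows the purely combinatorial proof sketched above.
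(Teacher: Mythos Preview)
Your proposal is correct, and your self-contained combinatorial argument for even $p=2k$ is essentially identical to the paper's proof: the paper also assumes (``without loss of generality'') that $p$ is even, expands $(\sum_j x_j)^p$ multinomially, discards tuples in which some index appears with multiplicity one, bounds each surviving expectation by $\mathcal{L}_p$ via generalized H\"older, and counts the surviving tuples as $O(J^{p/2})$.

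Your first route via Marcinkiewicz--Zygmund is a genuinely different and more general argument: it yields the result for arbitrary real $p\geq 2$ with a constant depending only on $p$ and $\mathcal{L}_p$, whereas the paper's purely combinatorial proof (like your second route) is only valid for even integers. You are also right that the ``without loss of generality'' in the paper is not truly without loss for the stated generality of the lemma, but --- exactly as you observe --- every invocation in Lemma~\ref{prop:highmomentforpac2} takes $p$ even, so the combinatorial argument suffices for all uses in the paper. What Marcinkiewicz--Zygmund buys is a clean statement for all $p\geq 2$ at the cost of quoting a nontrivial external inequality; what the combinatorial argument buys is self-containment at the cost of restricting to even $p$.
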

\begin{proof} Without loss of generality, we assume $p$ is an even number and $J>p/2$. Then $\EE\left|\sum^J_{j=1}x_j\right|^p=\EE\left(\sum^J_{j=1}x_j\right)^{p}$.
\\
Since $\{x_i\}$ are independent with zero mean, we have
\[
\EE\left(\sum^J_{j=1}x_j\right)^{p}=\sum_{j_1+j_2+\cdots+j_J=p}\EE\left(x^{j_1}_1x^{j_2}_2\cdots x^{j_J}_J\right)\,,
\]
where $\{j_n\}$ should be non-negative integers and not equal to $1$ (otherwise $\EE x_i=0$ provides a trivial contribution).
\\
For each term in the summation, using generalization of H\"older's inequality, we have
\[
\EE\left(x^{j_1}_1x^{j_2}_2\cdots x^{j_J}_J\right)\leq \Pi^J_{n=1}(\EE|x_{n}|^p)^{j_n/p}=\mathcal{L}_p\,,
\]
which impies
\begin{equation}\label{Exjnumber}
\EE\left(\sum^J_{j=1}x_j\right)^{p}\leq \mathcal{L}_p\left(\sum_{j_1+j_2+\cdots+j_J=p}1\right)=\mathcal{L}_p|I_1|
\end{equation}
where
\[
I_1=\left\{\left(j_1,\cdots,j_J\right)\middle|j_n\in\mathbb{N}\setminus \{1\},\ \sum^J_{n=1}j_n=p\right\}
\]
and $|I_1|$ denotes the cardinality of the set $I_1$. 
\\
In $I_1$, if $j_n$ doesn't equal to zero, then $j_n$ is at least $2$, meaning there are at most $p/2$ non-trivial elements in the vector. Therefore, we have the following inequality
\begin{equation}\label{Inumber}
|I_1|\leq P(J,{p/2})|I_2|\leq J^{p/2}|I_{2}|\leq C(p)J^{p/2}\,.
\end{equation}
Here $P(J,{p/2})$ denotes the number of $p/2$-permutations in $J$ and is thus smaller than $J^{p/2}$, and $I_{2}$ is a new set defined by:
\[
I_{2} =  \left\{\left(i_1,\cdots,i_{p/2}\right)\middle|i_n\in\mathbb{N}^+\setminus \{1\},\ \sum^{p/2}_{n=1}i_n=p\right\}\,.
\]
Its cardinality does not have $J$ dependence and thus we bound it by $C(p)$, a constant depending on $p$ only.\qed
\end{proof}

\section{Bound of high moments of $\{u^j\}$}\label{appendixhighmoment}
\begin{proof}
For convenience, we omit the subscript $'t'$ in $u,\textbf{u},e,\textbf{e}$ etc. First, we prove the boundedness of $\mathbb{E}\left[\frac{1}{J}\sum^J_{j}|e^j|^2\right]^p$, which we will use later.
\begin{equation}\label{epbound}
\begin{aligned}
\mathbb{E}\left[\frac{1}{J}\sum^J_{j}|e^j|^2\right]^{p}&\leq \mathbb{E}\left[\sum^{K}_{m=1}\frac{1}{J}\sum^J_{j}|e^j_m|^2\right]^{p}\\
&\leq C_{p}\mathbb{E}\left(\sum^{K}_{m=1}\left[\frac{1}{J}\sum^J_{j}|e^j_m|^2\right]^{p}\right)\\
&\leq C_{p}V_{2p}(e)\leq C\,,
\end{aligned}
\end{equation}
which also implies
\begin{equation}\label{tepbound}
\mathbb{E}\left[\frac{1}{J}\sum^J_{j}|\textbf{e}^j|^2\right]^{p}\leq C\mathbb{E}\left[\frac{1}{J}\sum^J_{j}|e^j|^2\right]^{p}\leq C\,.
\end{equation}
Then, we first estimate $\EE|\tu^j|^{2p}$. Using Ito's formula, for fix $1\leq j\leq J$ and $p\geq1$, we obtain
\begin{equation}\label{tuestimate1}
\begin{aligned}
&d|\tu^j|^{2p}=-2p\left(|\tu^j|^{2(p-1)}\left\langle \tu^j,\Cov_{\tu}\tu^j\right\rangle\right)dt+\mathrm{\textbf{R}}\ dW^j_t\\
&+p\left(|\tu^j|^{2(p-1)}\left[\frac{1}{J^2}\sum^J_{i,k=1}\left\langle \te^i,\te^k\right\rangle^2\right]\right)dt\\
&+\frac{2p(p-1)}{J^2}\left(\left|\tu^j\right|^{2(p-2)}\sum^J_{i,k=1}\left\langle \tu^j,\te^i\right\rangle\left\langle \tu^j,\te^k\right\rangle\left\langle \te^i,\te^k\right\rangle\right)dt\\
&+2p\left(|\tu^j|^{2(p-1)}\left\langle \tu^j,\Cov_{\tu,\tr}\Gamma^{-\frac{1}{2}}\left(r-\Rm(u)\right)\right\rangle\right)dt\\
&+p\left(|\tu^j|^{2(p-1)}\left[\frac{1}{J^2}\sum^J_{i,k=1}\left\langle \te^i,\te^k\right\rangle\left\langle \tr^i,\tr^k\right\rangle\right]\right)dt\\
&+\frac{2p(p-1)}{J^2}\left(\left|\tu^j\right|^{2(p-2)}\sum^J_{i,k=1}\left\langle \tu^j,\te^i\right\rangle\left\langle \tu^j,\te^k\right\rangle\left\langle \tr^i,\tr^k\right\rangle\right)dt\,,\\
\end{aligned}
\end{equation}
where $\mathrm{\textbf{R}}$ is the coefficient before Brownian motion. The first term is negative. To complete the computation, we need to provide the bound for the rest. The second term is bounded by:
\begin{equation*}\label{Boundtu2}
\begin{aligned}
&\mathbb{E}\left(|\tu^j|^{2(p-1)}\left[\frac{1}{J^2}\sum^J_{i,k=1}\left\langle \te^i,\te^k\right\rangle^2\right]\right)\\
\leq&\mathbb{E}\left(|\tu^j|^{2(p-1)}\left[\frac{1}{J}\sum^J_{i=1}|\te^i|^2\right]^2\right)\\
\leq&\left(\mathbb{E}|\tu^j|^{2p}\right)^{(p-1)/p}\left(\EE\left[\frac{1}{J}\sum^J_{i=1}|\te^i|^2\right]^{2p}\right)^{1/p}\,.
\end{aligned}
\end{equation*}
The third term is bounded by:
\begin{equation*}
\begin{aligned}
&\frac{1}{J^2}\EE\left(\left|\tu^j\right|^{2(p-2)}\sum^J_{i,k=1}\left\langle \tu^j,\te^i\right\rangle\left\langle \tu^j,\te^k\right\rangle\left\langle \te^i,\te^k\right\rangle\right)\\
\leq&\mathbb{E}\left(|\tu^j|^{2(p-1)}\left[\frac{1}{J}\sum^J_{i=1}|\te^i|^2\right]^2\right)\\
\leq&\left(\mathbb{E}|\tu^j|^{2p}\right)^{(p-1)/p}\left(\EE\left[\frac{1}{J}\sum^J_{i=1}|\te^i|^2\right]^{2p}\right)^{1/p}\,.
\end{aligned}
\end{equation*}
And similarly, the rests are bounded by:
\begin{equation*}
\begin{aligned}
&\mathbb{E}\left(|\tu^j|^{2(p-1)}\left\langle \tu^j,\Cov_{\tu,\tr}\Gamma^{-\frac{1}{2}}\left(r-\Rm(u)\right)\right\rangle\right)\\
\leq &C\mathbb{E}\left(|\tu^j|^{2(p-\frac{1}{2})}\left[\frac{1}{J}\sum^J_{k=1}|\te^k|^2\right]^{\frac{1}{2}}\right)\\
\leq &C\left(\mathbb{E}|\tu^j|^{2p}\right)^{(p-\frac{1}{2})/p}\left(\EE\left[\frac{1}{J}\sum^J_{i=1}|\te^i|^2\right]^{p}\right)^{1/(2p)}
\end{aligned}
\end{equation*}
and
\begin{equation*}
\begin{aligned}
&\mathbb{E}\left(|\tu^j|^{2(p-1)}\left[\frac{1}{J^2}\sum^J_{i,k=1}\left\langle \te^i,\te^k\right\rangle\left\langle \tr^i,\tr^k\right\rangle\right]\right)\\
\leq &C\mathbb{E}\left(|\tu^j|^{2(p-1)}\left[\frac{1}{J}\sum^J_{i=1}|\te^i|^2\right]\right)\\
\leq &C\left(\mathbb{E}|\tu^j|^{2p}\right)^{(p-1)/p}\left(\EE\left[\frac{1}{J}\sum^J_{i=1}|\te^i|^2\right]^{p}\right)^{1/p}
\end{aligned}
\end{equation*}
and
\begin{equation*}\label{Boundtu6}
\begin{aligned}
&\frac{1}{J^2}\EE\left(\left|\tu^j\right|^{2(p-2)}\sum^J_{i,k=1}\left\langle \tu^j,\te^i\right\rangle\left\langle \tu^j,\te^k\right\rangle\left\langle \tr^i,\tr^k\right\rangle\right)\\
\leq &C\mathbb{E}\left(|\tu^j|^{2(p-1)}\left[\frac{1}{J}\sum^J_{i=1}|\te^i|^2\right]\right)\\
\leq &C\left(\mathbb{E}|\tu^j|^{2p}\right)^{(p-1)/p}\left(\EE\left[\frac{1}{J}\sum^J_{i=1}|\te^i|^2\right]^{p}\right)^{1/p}\,.
\end{aligned}
\end{equation*}
Plug all these inequalities back in \eqref{tuestimate1}, and utilize \eqref{tepbound}, we have:
\begin{equation}\label{estimationfortu}
\frac{d\mathbb{E}|\tu^j|^{2p}}{dt}\leq 2C\left(\mathbb{E}|\tu^j|^{2p}\right)^{(p-1)/p}\Rightarrow \mathbb{E}|\tu^j|^{2p}\leq C\,.
\end{equation}

Then, to deal with $\EE|u^j|^{2p}$, we use Ito's formula similarly, for fix $1\leq j\leq J$ and $p\geq1$, we obtain
\[
\begin{aligned}
&\frac{d|u^j|^{2p}}{dt}=-2p\left(|u^j|^{2(p-1)}\left\langle u^j,\Cov_{u,\tu}\tu^j\right\rangle\right)dt+\mathrm{R} dW^j_t\\
&+p\left(|u^j|^{2(p-1)}\left[\frac{1}{J^2}\sum^J_{i,k=1}\left\langle e^i,e^k\right\rangle\left\langle \textbf{e}^k,\textbf{e}^i\right\rangle\right]\right)dt\\
&+\frac{2p(p-1)}{J^2}\left(\left|u^j\right|^{2(p-2)}\sum^J_{i,k=1}\left\langle u^j,e^i\right\rangle\left\langle u^j,e^k\right\rangle\left\langle \te^i,\te^k\right\rangle\right)dt\\
&+2p\left(|u^j|^{2(p-1)}\left\langle u^j,\Cov_{u,\tr}\Gamma^{-\frac{1}{2}}\left(r-\Rm(u)\right)\right\rangle\right)dt\\
&+p\left(|u^j|^{2(p-1)}\left[\frac{1}{J^2}\sum^J_{i,k=1}\left\langle e^i,e^k\right\rangle\left\langle \tr^i,\tr^k\right\rangle\right]\right)dt\\
&+\frac{2p(p-1)}{J^2}\left(\left|u^j\right|^{2(p-2)}\sum^J_{i,k=1}\left\langle u^j,e^i\right\rangle\left\langle u^j,e^k\right\rangle\left\langle \tr^i,\tr^k\right\rangle\right)dt\,,
\end{aligned}
\]
where $\mathrm{R}$ is the coefficient before Brownian motion. The six terms are considered separately:
\begin{enumerate}[itemindent=2em,topsep=0pt,itemsep=-1ex,partopsep=1ex,parsep=1ex,label=\arabic*]
\item[Term 1]
\[
\begin{aligned}
&\left|\mathbb{E}\left(|u^j|^{2(p-1)}\left\langle u^j,\Cov_{u,\tu}\tu^j\right\rangle\right)\right|\\
\leq &\mathbb{E}\left(|u^j|^{2p-\frac{1}{2}}\frac{1}{J}\sum^J_{k=1}|e^k||\te^k||\tu^j|\right)\\
\leq &\left(\mathbb{E}|u^j|^{2p}\right)^{(2p-\frac{1}{2})/(2p)}\left(\EE\left(\frac{1}{J}\sum^J_{k=1}|e^k||\te^k||\tu^j|\right)^{4p}\right)^{1/(4p)}\\
\leq &C\left(\mathbb{E}|u^j|^{2p}\right)^{(2p-\frac{1}{2})/(2p)}\,,
\end{aligned}
\]
where in the last inequality we use \eqref{epbound},\eqref{tepbound} and \eqref{estimationfortu} with H\"older's inequality.
\item[Term 2]
\[
\begin{aligned}
&\left|\mathbb{E}\left(|u^j|^{2(p-1)}\left[\frac{1}{J^2}\sum^J_{i,k=1}\left\langle e^i,e^k\right\rangle\left\langle \textbf{e}^k,\textbf{e}^i\right\rangle\right]\right)\right|\\
\leq&\ C\mathbb{E}\left(|u^j|^{2(p-1)}\left[\frac{1}{J^2}\sum^J_{i,k=1}|e^i|
|e^k||\textbf{e}^i||\textbf{e}^k|\right]\right)\\
\leq&\ C\mathbb{E}\left(|u^j|^{2(p-1)}\left(\frac{1}{J}\sum^J_{i=1}|e^i|^2\right)
\left(\frac{1}{J}\sum^J_{k=1}|e^k|^2\right)\right)\\
\leq&\ C\mathbb{E}\left(|u^j|^{2p}\right)^{(p-1)/p} \left(\mathbb{E}\left[\frac{1}{J}\sum^J_{i=1}|e^i|^2\right]^{2p}\right)^{1/p}\\
\leq&\ C\mathbb{E}\left(|u^j|^{2p}\right)^{(p-1)/p} \left(\mathbb{E}\left[\sum^K_{m=1}\frac{1}{J}\sum^J_{i=1}|e^i_m|^2\right]^{2p}\right)^{1/p}\\
\leq&\ C\mathbb{E}\left(|u^j|^{2p}\right)^{(p-1)/p} \left(\mathbb{E}\sum^K_{m=1}\left[\frac{1}{J}\sum^J_{i=1}|e^i_m|^2\right]^{2p}\right)^{1/p}\\
\leq&\  CV_{4p}^{1/p}(e_0)\mathbb{E}\left(|u^j|^{2p}\right)^{(p-1)/p}\,.
\end{aligned}
\]
\item[Term 3]
\[
\begin{aligned}
&\left|\EE\left(\left|u^j\right|^{2(p-2)}\left[\frac{1}{J^2}\sum^J_{i,k=1}\left\langle u^j,e^i\right\rangle\left\langle u^j,e^k\right\rangle\left\langle \te^i,\te^k\right\rangle\right]\right)\right|\\
\leq&\,C\mathbb{E}\left(|u^j|^{2(p-1)}\left[\frac{1}{J^2}\sum^J_{i,k=1}|e^i|
|e^k||\textbf{e}^i||\textbf{e}^k|\right]\right)\\
\leq&\  CV_{4p}^{1/p}(e_0)\mathbb{E}\left(|u^j|^{2p}\right)^{(p-1)/p}\,.
\end{aligned}
\]
\item[Term 4]
\[
\begin{aligned}
&\left|\mathbb{E}\left(|u^j|^{2(p-1)}\left\langle u^j,\Cov_{u,\tr}\Gamma^{-\frac{1}{2}}\left(r-\Rm(u)\right)\right\rangle\right)\right|\\
\leq &M^2\mathbb{E}\left(|u^j|^{2p-\frac{1}{2}}\frac{1}{J}\sum^J_{k=1}|e^k|\right)\\
\leq &\left(\mathbb{E}|u^j|^{2p}\right)^{(2p-\frac{1}{2})/(2p)}\left(\EE\left(\frac{1}{J}\sum^J_{k=1}|e^k|\right)^{4p}\right)^{1/(4p)}\\
\leq &C\left(\mathbb{E}|u^j|^{2p}\right)^{(2p-\frac{1}{2})/(2p)}\,,
\end{aligned}
\]
where in the last inequality we use \eqref{epbound} and \eqref{estimationfortu} with H\"older's inequality.
\item[Term 5]
\[
\begin{aligned}
&\left|\mathbb{E}\left(|u^j|^{2(p-1)}\left[\frac{1}{J^2}\sum^J_{i,k=1}\left\langle e^i,e^k\right\rangle\left\langle \textbf{r}^k,\textbf{r}^i\right\rangle\right]\right)\right|\\
\leq&\ CM^2\mathbb{E}\left(|u^j|^{2(p-1)}\left(\frac{1}{J}\sum^J_{i=1}|e^i|^2\right)\right)\\
\leq&\ C\mathbb{E}\left(|u^j|^{2p}\right)^{(p-1)/p} \left(\mathbb{E}\left[\frac{1}{J}\sum^J_{i=1}|e^i|^2\right]^{p}\right)^{1/p}\\
\leq&\ C\mathbb{E}\left(|u^j|^{2p}\right)^{(p-1)/p} \left(\mathbb{E}\left[\sum^K_{m=1}\frac{1}{J}\sum^J_{i=1}|e^i_m|^2\right]^{p}\right)^{1/p}\\
\leq&\ C\mathbb{E}\left(|u^j|^{2p}\right)^{(p-1)/p} \left(\mathbb{E}\sum^K_{m=1}\left[\frac{1}{J}\sum^J_{i=1}|e^i_m|^2\right]^{p}\right)^{1/p}\\
\leq&\  CV_{2p}^{1/p}(e_0)\mathbb{E}\left(|u^j|^{2p}\right)^{(p-1)/p}\,.
\end{aligned}
\]
\item[Term 6]
\[
\begin{aligned}
&\left|\EE\left(\left|u^j\right|^{2(p-2)}\left[\frac{1}{J^2}\sum^J_{i,k=1}\left\langle u^j,e^i\right\rangle\left\langle u^j,e^k\right\rangle\left\langle \tr^i,\tr^k\right\rangle\right]\right)\right|\\
\leq&\,C\mathbb{E}\left(|u^j|^{2(p-1)}\left[\frac{1}{J^2}\sum^J_{i,k=1}|e^i|
|e^k||\textbf{r}^i||\textbf{r}^k|\right]\right)\\
\leq&\,CV_{2p}^{1/p}(e_0)\mathbb{E}\left(|u^j|^{2p}\right)^{(p-1)/p}\,.
\end{aligned}
\]
\end{enumerate}
By Lemma \ref{anotherestimate}, we obtain the boundedness for $\mathbb{E}\left\|u^j\right\|^{2p}_2$ .Then to prove the second inequality of \eqref{highmomentu}, it suffices to prove
\[
\left(\mathbb{E}\left\|\Cov_{u_t}\right\|^p_2\right)^{1/p}\leq C_p\,,
\]
which is a direct result by expansion of $\Cov_{u_t}$ and triangle inequality:
\[
\begin{aligned}
\left(\mathbb{E}\left\|\Cov_{u_t}\right\|^p_2\right)^{1/p}&\leq \frac{1}{J}\sum^J_{j=1}\left(\mathbb{E}\left\|(u^j-\overline{u})\otimes(u^j-\overline{u})\right\|^p_2\right)^{1/p}\\&
\leq \frac{1}{J}\sum^J_{j=1}\left(\mathbb{E}\left|u^j-\overline{u}\right|^{2p}\right)^{1/p}\leq C\,.
\end{aligned}
\]
Here the last inequality comes from each term of the sum has a bound
\[
\begin{aligned}
&\left(\mathbb{E}\left|u^j-\overline{u}\right|^{2p}\right)^{1/p}\leq \left[\left(\mathbb{E}\left|u^j-\overline{u}\right|^{2p}\right)^{\frac{1}{2}p}\right]^{2}\\
\leq &\left[\frac{J-1}{J}\mathbb{E}\left(|u^j|^{2p}\right)^{\frac{1}{2}p}+\frac{1}{J}\sum^J_{k\neq j}\mathbb{E}\left(|u^k|^{2p}\right)^{\frac{1}{2}p}\right]^{2}\leq C\,.
\end{aligned}
\]
\qed
\end{proof}
\end{document}